\def\@settitle{\begin{center}%
    \bfseries
 \normalfont\LARGE\@title
  \end{center}%
}
\def\@setauthors{\begin{center}%
 \normalsize\@author
  \end{center}%
}
\numberwithin{equation}{section}
\def\hatmfc{\hat{m}_{t, \mathrm{fc}}}
\def\tilmfc{\tilde{m}_{t, \mathrm{fc}}}
\def\d{\mathrm{d}}
\def\tilrhoinf{\tilde{\rho}_\infty}
\def\muHo{\mu_{H_0}}
\def\tilxi{\tilde{\xi}}
\def\hatxi{\hat{\xi}}
\def\e{\mathrm{e}}
\def\tilEp{\tilde{E}_+}
\def\tilxip{\tilde{\xi}_+}
\def\hatxip{\hat{\xi}_+}
\def\hatEp{\hat{E}_+}
\def\tilminf{\tilde{m}_\infty}
\def\O{\mathcal{O}}
\def\tilL{\tilde{L}}
\def\hatg{\hat{\gamma}_0}
\def\tilg{\tilde{\gamma}_0}
\def\rr{\mathbb{R}}
\def\ee{\mathbb{E}}
\def\hatgam{\hat{\gamma}_0}
\renewcommand{\cal}{\mathcal}
\newcommand\cA{{\mathcal A}}
\newcommand{\cC}{{\cal C}}
\newcommand{\cD}{{\cal D}}
\newcommand{\cE}{{\cal E}}
\newcommand{\cN}{{\cal N}}
\newcommand{\cR}{{\mathcal R}}
\newcommand{\cX}{{\mathcal X}}
\newcommand{\fa}{{\mathfrak a}}
\newcommand{\fb}{{\mathfrak b}}
\newcommand{\fc}{{\mathfrak c}}
\newcommand{\fd}{{\mathfrak d}}
\newcommand{\rd}{{\rm d}}
\newcommand{\ri}{\mathrm{i}}
\newcommand{\bC}{{\mathbb C}}
\newcommand{\bE}{\mathbb{E}}
\newcommand{\bP}{\mathbb{P}}
\newcommand{\bR}{{\mathbb R}}
\newcommand{\la}{\lambda}
\DeclareMathOperator{\Tr}{Tr}
\DeclareMathOperator{\supp}{supp}
\DeclareMathOperator{\dist}{dist}
\DeclareMathOperator{\OO}{O}
\DeclareMathOperator{\oo}{o}
\renewcommand{\Re}{\mathop{\mathrm{Re}}}
\renewcommand{\Im}{\mathop{\mathrm{Im}}}
\newcommand{\deq}{\mathrel{\mathop:}=} 
\renewcommand{\leq}{\leqslant}
\renewcommand{\geq}{\geqslant}
\newcommand{\nc}{\normalcolor}
\newcommand{\del}{\partial}
\newcommand{\beq}{\begin{equation}}
\newcommand{\eeq}{\end{equation}}
\theoremstyle{plain} 
\newtheorem{theorem}{Theorem}[section]
\newtheorem*{theorem*}{Theorem}
\newtheorem{lemma}[theorem]{Lemma}
\newtheorem*{lemma*}{Lemma}
\newtheorem{corollary}[theorem]{Corollary}
\newtheorem*{corollary*}{Corollary}
\newtheorem{proposition}[theorem]{Proposition}
\newtheorem*{proposition*}{Proposition}
\newtheorem*{assumption*}{Assumption}
\newtheorem{definition}[theorem]{Definition}
\newtheorem*{definition*}{Definition}
\newtheorem*{example*}{Example}
\newtheorem{remark}[theorem]{Remark}
\newtheorem*{remark*}{Remark}
\newtheorem*{remarks*}{Remarks}
\def\author#1{\par
    {\centering{\authorfont#1}\par\vspace*{0.05in}}
}
\def\titlefont{\fontsize{13}{15}\bfseries\boldmath\selectfont\centering{}}
\def\authorfont{\fontsize{13}{15}}
\let\affiliationfont\rhfont
\def\address#1{\par
    {\centering{\affiliationfont#1\par}}\par\vspace*{11pt}
}
\def\body{
\setcounter{footnote}{0}
\def\thefootnote{\alph{footnote}}
\def\@makefnmark{{$^{\rm \@thefnmark}$}}
}
\def\title#1{
    \thispagestyle{plain}
    \vspace*{-14pt}
    \vskip 79pt
    {\centering{\titlefont #1\par}}%
    \vskip 1em
}
\def\ev{\boldsymbol{e}}
\def\rhosc{\rho_{\text{sc}}}
\renewcommand{\i}{\mathrm{i}}
\begin{document}

\title{Transition from Tracy-Widom to Gaussian fluctuations of extremal eigenvalues of sparse Erd{\H o}s-R{\'e}nyi graphs}

\vspace{1.2cm}

\noindent \begin{minipage}[c]{0.33\textwidth}
 \author{Jiaoyang Huang}
\address{Harvard University\\
   E-mail: jiaoyang@math.harvard.edu}
 \end{minipage}
 \begin{minipage}[c]{0.33\textwidth}
 \author{Benjamin Landon}
\address{Harvard University\\
   E-mail: landon@math.harvard.edu}
 \end{minipage}
\begin{minipage}[c]{0.33\textwidth}
 \author{Horng-Tzer Yau}
\address{Harvard University \\
   E-mail: htyau@math.harvard.edu}

 \end{minipage}

~\vspace{0.3cm}

\begin{abstract}
We consider the statistics of the extreme eigenvalues of sparse random matrices, a class of random matrices that includes the normalized adjacency matrices of the Erd{\H o}s-R{\'e}nyi graph $G(N,p)$. Tracy-Widom fluctuations of the extreme eigenvalues for $p\gg N^{-2/3}$ was proved in \cite{Lee2016,MR2964770}. We prove that there is a crossover in the behavior
of the extreme eigenvalues at $p\sim N^{-2/3}$. In the case that $N^{-7/9}\ll p\ll N^{-2/3}$, we prove that the extreme eigenvalues have asymptotically Gaussian fluctuations. Under a mean zero condition and when $p=CN^{-2/3}$, we find that the fluctuations of the extreme eigenvalues are given by a combination of the Gaussian and the Tracy-Widom distribution. These results show that the eigenvalues at the edge of the spectrum of sparse Erd\H{o}s-R\'enyi graphs are less rigid than those of random $d$-regular graphs \cite{BHKY} of the same average degree.
\end{abstract}

{
\hypersetup{linkcolor=black}
\tableofcontents
}

\section{Introduction}\label{s:intro}

{\let\thefootnote\relax\footnote{The work of H.-T. Y. is partially supported by NSF Grant DMS-1307444,  DMS-1606305 and a Simons Investigator award.}}

In this work we study the statistics of eigenvalues at the edge of  the spectrum of sparse random matrices. A natural example is the  adjacency matrix of the  Erd\H{o}s-R\'enyi graph $G(N,p)$, which is the random undirected graph on $N$ vertices in which each edge appears independently with probability $p$. 

Introduced in \cite{MR0108839,MR0120167}, the Erd\H{o}s-R\'enyi graph $G(N,p)$ has numerous applications in  graph theory, network theory, mathematical physics and  combinatorics. For further information, we refer the reader to the monographs \cite{MR1782847,MR1864966}.
Many interesting properties of graphs are revealed by the eigenvalues and eigenvectors of their adjacency matrices. Such phenomena and the applications have been intensively investigated for over half a century. To mention some, we refer the readers to the books \cite{MR1421568, MR2882891} for a general discussion on spectral graph theory,  the survey article \cite{MR2247919} for the connection between eigenvalues and expansion properties of graphs, and the articles \cite{MR1468791, MR1240959, 609407,1Coifman24052005,2Coifman24052005,doi:10.1137/0611030,MR2952760, MR2294342,MR1450607} on the applications of eigenvalues and eigenvectors in various algorithms, i.e., combinatorial optimization, spectral partitioning and clustering.

The adjacency matrices of Erd\H{o}s-R\'enyi graphs have typically $pN$ nonzero entries in each column and are sparse if $p \ll 1$. 
When $p$ is of constant order, the Erd\H{o}s-R\'enyi matrix is essentially a Wigner matrix (up to a non-zero mean of the matrix entries).  When $p \to 0$ as $N \to \infty$, the law of the matrix entries is highly concentrated at $0$, and the Erd\H{o}s-R\'enyi matrix can be viewed as a singular Wigner matrix.
The singular nature of this ensemble can be expressed by the fact that the $k$-th moment of a matrix entry (in the scaling that the bulk of the eigenvalues lie in an interval of order $1$) decays like ($k \geq 2$)
\begin{align} \label{eqn:decaymom}
N^{-1} (p N)^{- (k-2)/2}.
\end{align}
When $p \ll 1$, this decay in $k$ is much slower than the $N^{-k/2}$ case of Wigner matrices, and is the main source of difficulties in studying sparse ensembles with random matrix methods.

The class of random matrices whose moments decay like \eqref{eqn:decaymom}  were introduced in the works \cite{MR3098073,MR2964770} as a natural generalization of the sparse Erd\H{o}s-R\'enyi graph and encompass many other sparse ensembles.  This is the class we study in this work.

The global statistics of the eigenvalues of the Erd\H{o}s-R\'enyi graph are well understood. The empirical eigenvalue distribution converges to the semi-circle distribution provided $p\gg 1/N$, which follows from Wigner's original proof. It was proven in \cite{MR1825319}, that the spectral norm converges to $2$ if and only if $p\gg \log N$. For the global eigenvalue fluctuations, it was proven in \cite{MR2953145} that the linear statistics, after normalizing by $p^{1/2}$, converge to a Gaussian random variable.


A three-step dynamical approach to the local statistics of random matrices has been developed in a series of papers
 \cite{MR2537522,MR2481753,MR3068390,MR2981427,MR2919197,MR2871147,MR2810797,kevin3,fix2,MR3687212,MR3541852,ajanki2}.  This strategy is as follows.
 \begin{enumerate}
\item Establish a local semicircle law controlling the number of eigenvalues in windows of size $\log(N)^C/N$.
\item Analyze the local ergodicity of Dyson Brownian motion to obtain universality after adding a small Gaussian noise to the ensemble.
\item A density argument comparing a general matrix to one with a small Gaussian component.
\end{enumerate}
 This approach has been used to prove the bulk universality of a wide variety of random matrix ensembles, e.g., the sparse random matrices considered here\cite{MR3098073,MR2964770,MR3429490} or random matrices with correlated entries \cite{MR3478311,MR3629874,correlated,LZ}. 
  A different  approach which proves universality for some class of random matrices  was developed in \cite{MR2784665}. 

Universality for the edge statistics of Wigner matrices (the statement that the distribution of the extremal eigenvalues converge to the Tracy-Widon law)   was first established by the moment method \cite{MR1727234} under certain symmetry assumptions on the distribution of the matrix elements. The moment method was further developed in \cite{MR2475670,MR2647136} and \cite{MR2726110}. A different approach to edge universality for Wigner matrices  based on the direct comparison with corresponding Gaussian ensembles was developed in \cite{MR2669449,MR2871147}.  Edge universality for sparse ensembles was proven first in the regime $p\gg N^{-1/3}$ in the works \cite{MR3098073,MR2964770} and then extended to the regime $p \gg N^{-2/3}$ in \cite{Lee2016}. 

One of the key obstacles in the proof of the edge universality for sparse ensembles is the lack of an optimal rigidity estimate at the edge of the spectrum, which is a overwhelming probability a-priori bound on the distance of the extremal eigenvalues from the spectral edge.  An important component of the work of \cite{Lee2016} is establishing such  optimal rigidity estimates at the edge; furthermore for sparse ensembles they calculate a deterministic correction to the usual semicircle spectral edge $\pm 2$.  This correction is larger than the $N^{-2/3}$ Tracy-Widom fluctuations and is therefore necessary for their proof of universality. 

In our first main result we show that a rigidity result can no longer hold on the scale $N^{-2/3}$ in the regime $N^{-7/9}\ll p\ll N^{-2/3}$. We find a contribution to the edge fluctuations of order $(p^{1/2} N)^{-1}$ which dominates the usual Tracy-Widom scale $N^{-2/3}$ for $p$ in this regime; moreover we find that these fluctuations are asymptotically Gaussian.  
Comparing this to the result of \cite{Lee2016} we see a transition in the behavior at $p=N^{-2/3}$ from the Tracy-Widom distribution to the Gaussian distribution. 

 Our proof is based on constructing a higher order self-consistent equation for the Stieltjes transform of the empirical eigenvalue distributions. 
Self-consistent equations  have been widely used in the random matrix theory \cite{MR2183281,MR0208649,HKR, AEK}. The leading order componennt of our self-consistent equation corresponds to the semicircle law.  
Higher order corrections to  the self-consistent equations 
or  the ``fluctuation averaging  lemma"  were also constructed in  \cite{BHKY, Lee2016,MR3119922,MR3085669}.  The works   \cite{BC, EvEc} calculate higher order corrections to the expected density of states for various random matrix ensembles. 
 In the work \cite{MR2871147} it was noted that naive power counting arguments can be improved using higher order cancellations in the trace of the Green's function.  The works \cite{Lee2016,schnelliedge1,schnelliedge2}  develop a systematic approach to exploiting this cancellation to arbitrarily high orders.

In the current setting, our derivation of  a  recursive moment estimate for the normalized trace  follows  the recent work of  \cite{Lee2016}. 
However, we construct an explicit {\it random} measure from carefully chosen observables of the random matrix which approximates the Stieltjes transform $m_N$ of the empirical measure of the random matrix.  We prove a rigidity result with respect to this random measure, and then show that the edge of this measure has asymptotic Gaussian fluctuations of a larger order than the rigidity estimate, implying the Gaussian fluctuations for the edge eigenvalues themselves. 

Our second main result concerns sparse random matrices $H$ with centered entries - e.g., $\bE[H_{ij}=\pm1/ \sqrt{2Np}]=p/2$ and $\bE[H_{ij}=0]=1-p$.  When $p=CN^{-2/3}$ we find that the distribution of the extreme eigenvalues converges to an independent sum of a Gaussian and Tracy-Widom random variable.  The Gaussian fluctuation we find in both the case $p \ll N^{-2/3}$ and $p = C N^{-2/3}$ is an  expansion in the support of the density of states - the correction to the smallest eigenvalue is the same as to the largest eigenvalue, but with the opposite sign.  We exhibit this correction as a specific extensive quantity involving the matrix elements of $H$.  If one subtracts this quantity one finds the usual Tracy-Widom fluctuations down to $p \gg N^{-7/9}$.  For example, we show that the gap $\lambda_2 - \lambda_1$ is still given asymptotically by the gap of the GOE, on the usual $N^{-2/3}$ scale.


In order to exhibit Tracy-Widom fluctuations, we compare sparse ensembles to Gaussian divisible ensembles - that is, a sparse ensemble with a GOE component.  In \cite{Lee2016} the local law allowed for the comparison of sparse ensembles directly to the GOE.  When $p \ll N^{-2/3}$ it does not appear possible to compare directly to the GOE and so instead we use a Gaussian divisible ensemble with a small $o(1)$ GOE component.

Edge universality for such ensembles is established in \cite{LY}.  This work proves a version of Dyson's conjecture on the local ergodicity of Dyson Brownian motion for edge statistics.  That is, for wide classes of initial data, the edge statistics of DBM coincides with the GOE, and moreover \cite{LY} finds the optimal time to equilibrium $t \sim N^{-1/3}$ for sufficiently regular initial data.

\emph{Organization.} We define the model and present the main results in the rest of Section \ref{s:intro}. 
In Section \ref{s:rigidity}, we obtain the optimal edge rigidity estimates in the regime $p\gg N^{-7/9}$. It implies that in the regime $N^{-7/9}\ll p \ll N^{-2/3}$, the extreme eigenvalues have Gaussian fluctuation. In Section \ref{sec:ht}, we recall the results from \cite{LY} for  edge universality of Gaussian divisible ensembles. 
In Section \ref{s:comparison} we analyze the Green function to compare a sparse ensemble to a Gaussian divisible ensemble and establish our results about Tracy-Widom fluctuations.

\emph{Conventions.} We use $C$ to represent large universal constants, and $c$ small universal constants, which may be different from line by line. Let $Y\geq 0$. 
We write $X\ll Y$ or $Y\gg X$, if there exists a small exponent $\fc>0$ such that $X\leq  N^{-\fc}Y$ for $N\geq N(\fc)$ large enough. We write $X\lesssim Y$ or $Y\gtrsim X$ if there exists a constant $C>0$, such that $X\leq CY$. 
We write $X\asymp Y$ if there exists a constant $C>0$ such that $Y/C\leq X\leq Y/C$.
We say an event $\Omega$ holds with overwhelming probability, if for any $D>0$, $\bP(\Omega)\geq 1-N^{-D}$ for $N\geq N(D)$ large enough.

\subsection{Sparse random matrices}

In this section we introduce the class of sparse random matrices that we consider.  This class was introduced in \cite{MR3098073,MR2964770} and we repeat the discussion appearing there.

The Erd\H{o}s-R\'enyi graph is the undirected random graph in which each edge appears with probability $p$. 
 It is notationally convenient to replace the parameter $p$ with $q$ defined through
\begin{align}
p = q^2/N.
\end{align}
We allow $q$ to depend on $N$.   We denote by $A$ the adjacency matrix of the Erd\H{o}s-R\'enyi graph.  $A$ is an $N\times N$ symmetric matrix whose entries $a_{ij}$ above the main diagonal are independent and distributed according to
\begin{align}
a_{ij} =  \begin{cases} 1 & \mbox{ with probability } q^2/N \\ 0 & \mbox{ with probability } 1 - q^2/N \end{cases}.
\end{align}

We extract the mean of each entry and rescale the matrix so that the limiting eigenvalue distribution is supported on $[-2,2]$.  We introduce the matrix $H$ by 
\begin{align}
H\deq \frac{A-q^2\vert \ev \rangle \langle \ev \vert
}{q\sqrt{1-q^2/N}}
\end{align}
where $\bm e$ is the unit vector
\begin{align}
\ev=  ( 1, ..., 1 )^T /\sqrt{N}.
\end{align}
%
%
It is easy to check that the matrix elements of $H$ have mean zero $\bE[h_{ij}]=0$, variance $\bE[h_{ij}^2]=1/N$, and satisfy the moment bounds 
\begin{align}
\bE [  h_{ij}^k ] = \frac{1}{ Nq^{k-2} }\left[\left(1-\frac{q^2}{N}\right)^{-k/2+1}\left(\left(1-\frac{q^2}{N}\right)^{k-1}+(-1)^k\left(\frac{q^2}{N}\right)^{k-1}\right)\right]=\frac{\Omega(1)}{Nq^{k-2}},
\end{align}
for $k\geq 2$.  This motivates the following defintion. 

\begin{definition}[Sparse random matrices] {\label{asup}}
We assume that   $H=(h_{ij})$ is an $N\times N$ random matrix whose entries are real and independent up to the symmetry constraint $h_{ij}=h_{ji}$. We further assume that $(h_{ij})$ satisfies $\bE[h_{ij}]=0$, $\bE[h_{ij}^2]=1/N$ and  that for any $k\geq 2$, the $k$-th cumulant of $h_{ij}$ is given by 
\begin{align}
\frac{\cal (k-1)!\cC_k}{Nq^{k-2}},
\end{align}
where $q=q(N)$ is the sparsity parameter, such that $0< q\lesssim \sqrt{N}$.  For $\cC_k$ we make the following assumptions.
\begin{enumerate}[label={\normalfont(\arabic*)}]
\item $| \cC_k | \leq C_k$ for some constant $C_k>0$. 
\item $ \cC_4 \geq c$ \label{item:4clower}
\end{enumerate}
\end{definition}
\begin{remark} The lower bound, $ \cC_4 \geq c$, ensures that the scaling by $q$ for the ensemble $H$ is ``correct.''
\end{remark}


We denote the eigenvalues of $H$ by
$
\lambda_1\geq \lambda_2\geq\cdots\geq \lambda_N,
$
and the Green's function of $H$ by
\begin{align*}
G(z):=(H-z)^{-1}.
\end{align*}
The Stieltjes transform of the empirical eigenvalue distribution is denoted by
\begin{align*}
m_N(z):=\frac{1}{N}\Tr G(z)=\frac{1}{N}\sum_{i=1}^N\frac{1}{\lambda_i-z}.
\end{align*}
%
%
For $N$-dependent random (or deterministic) variables $A$ and $B$, we say $B$ stochastically dominate $A$, if for any $\varepsilon>0$ and $D>0$, then
\begin{equation}
 \bP(A \geq N^\varepsilon B) \leq N^{-D},
\end{equation}
for $N\geq N(\varepsilon, D)$ large enough, and we write   $A \prec B$ or $A=\O_\prec(B)$.

We define the following quantity, which governs the fluctuation of the extreme eigenvalues
\begin{align}\label{e:defcX}
\cX\deq \frac{1}{N}\sum_{ij}\left(h_{ij}^2-\frac{1}{N}\right).
\end{align}
If $H$ is the normalized adjacency matrix of Er{\"o}ds-R{\'e}nyi graphs, $\cX$ characterizes the fluctuation of the total number of edges. $\cX$ is of size $\O_\prec(\sqrt{N}q)$. Moreover, by the central limit theorem, the fluctuations of $\sqrt{N}q\cX $ are asymptotically Gaussian with mean zero and variance $6\cC_4$. 

\subsection{Main Results}

We first recall the strong entrywise local semicircle law for sparse random matrices from \cite{MR3098073}.

\begin{theorem}{\normalfont \cite{MR3098073}}\label{locallaw}
Let $H$ be as in Definition \ref{asup}.  Let $\bf >0$ be a large constant. 
Then 
uniformly for any $z=E+\ri \eta$ such that $-\fb\leq E\leq \fb$ and $1/N\ll \eta\leq \fb$,
we have
\begin{align}\label{indLocLaw}
\max_{i,j}|G_{ij}(z)-\delta_{ij}m_{ sc}(z)|\prec\left(\frac{1}{q}+\sqrt{\frac{\Im m_{sc}(z)}{N\eta} }+\frac{1}{N\eta}\right),
\end{align}
where $G(z)$ is the Green's function of the matrix $H$, and $m_{sc}(z)$ is the Stieltjes transform of the semi-circle distribution.
\end{theorem}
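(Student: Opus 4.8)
\emph{Proof plan (following the sparse local-law method of \cite{MR3098073}).} Fix $z = E + \ri\eta$ in the stated region and set $\Lambda_{\mathrm o} = \max_{i\ne j}\abs{G_{ij}(z)}$, $\Lambda_{\mathrm d} = \max_i\abs{G_{ii}(z) - m_{sc}(z)}$, $\Lambda = \Lambda_{\mathrm d} + \Lambda_{\mathrm o}$. The first step is to derive a self-consistent equation for the diagonal entries. By the Schur complement formula, for each $i$,
\[
\frac{1}{G_{ii}} = -z + h_{ii} - \sum_{k,l}^{(i)} h_{ik}\,G^{(i)}_{kl}\,h_{il},
\]
where $G^{(i)}$ is the Green's function of the minor of $H$ with index $i$ removed. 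Eigenvalue interlacing gives $\tfrac1N\sum_k^{(i)}G^{(i)}_{kk} = m_N + \O_\prec\!\bigl(\tfrac1{N\eta}\bigr)$, hence
\[
\frac1{G_{ii}} = -z - m_N + \Upsilon_i, \qquad \Upsilon_i = h_{ii} - Z_i + \O_\prec\!\bigl(\tfrac1{N\eta}\bigr),
\]
\[
Z_i = \sum_{k\ne l}^{(i)} h_{ik}G^{(i)}_{kl}h_{il} + \sum_k^{(i)}\Bigl(h_{ik}^2 - \tfrac1N\Bigr)G^{(i)}_{kk}.
\]
Averaging over $i$ yields a perturbation of the semicircle self-consistent equation $m_{sc} = (-z-m_{sc})^{-1}$, with the perturbation controlled by $\max_i\abs{\Upsilon_i}$ and $\Lambda_{\mathrm o}$.

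The heart of the argument is large-deviation control of $\Upsilon_i$. Since the entries have only boundedly many finite moments, $\bE\abs{h_{ij}}^r \asymp (N q^{r-2})^{-1}$ for $r\ge 3$, classical (sub-Gaussian) concentration is unavailable and one must estimate high moments of $Z_i$ by a combinatorial expansion in which powers of $q$ are tracked carefully. This gives $\abs{h_{ii}}\prec 1/q$; for the off-diagonal quadratic form, using the Ward identity $\sum_l\abs{G^{(i)}_{kl}}^2 = \eta^{-1}\Im G^{(i)}_{kk}$ one gets $\absb{\sum_{k\ne l}^{(i)}h_{ik}G^{(i)}_{kl}h_{il}} \prec \tfrac1q\Lambda_{\mathrm o} + \sqrt{\tfrac{\Im m_N}{N\eta}}$; and for the diagonal piece, conditioning on $G^{(i)}$, $\var\bigl(\sum_k(h_{ik}^2-\tfrac1N)G^{(i)}_{kk}\bigr)\lesssim\sum_k\bE[h_{ik}^4]\lesssim 1/q^2$ (using $\cC_4\ge c$), so it is $\O_\prec(1/q)$. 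This last estimate is exactly the source of the $1/q$ term in \eqref{indLocLaw}. Combining these with the stability of the semicircle equation — whose stability factor near an energy $E$ behaves like $(\kappa+\eta)^{-1/2}$, $\kappa = \absb{\,\abs{E}-2\,}$ — one obtains, on the event $\{\Lambda\le(\log N)^{-1}\}$,
\[
\abs{m_N - m_{sc}} + \Lambda_{\mathrm o} \prec \frac{1}{(\kappa+\eta)^{1/2}}\left(\frac1q + \sqrt{\frac{\Im m_{sc}+\Lambda}{N\eta}} + \frac1{N\eta}\right) + (\text{higher order in }\Lambda);
\]
as \eqref{indLocLaw} does not claim the optimal edge scaling, this ``square-root'' level of stability is enough.

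To close the estimate one first proves a rough a priori bound such as $\Lambda\prec(N\eta)^{-1/4}+q^{-1/2}$ and then runs a continuity (bootstrap) argument in $\eta$: at $\eta=\fb$ all quantities are $\O(1)$ and $\Lambda$ is small, and decreasing $\eta$ along a fine grid, the bound from the previous scale feeds the self-consistent inequality, the stability analysis furnishing a dichotomy that forbids $\Lambda$ from taking intermediate values, so the strong bound propagates down to $\eta\gg1/N$. Iterating the inequality — the current bound on $\Lambda$ re-enters only at higher order on the right — removes the self-reference and yields $\Lambda\prec\tfrac1q+\sqrt{\tfrac{\Im m_{sc}}{N\eta}}+\tfrac1{N\eta}$ for each fixed $z$; uniformity in $z$ then follows from a deterministic net argument using $\norm{G(z)}\le1/\eta$ and the consequent Lipschitz continuity of $z\mapsto G(z)$.

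The main obstacle is the large-deviation step for the sparse entries: linear and quadratic forms in the heavy-tailed $h_{ij}$ must be controlled with the \emph{correct} powers of $q$, which requires a careful high-moment expansion rather than an off-the-shelf concentration inequality, and it is precisely this analysis that produces — and shows one cannot remove — the $1/q$ error. A secondary difficulty is keeping all $\O_\prec$-bounds uniform up to the spectral edge and running the continuity argument without losing powers of $\log N$ that would spoil the stochastic-domination statement.
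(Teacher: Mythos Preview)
The paper does not prove this statement: Theorem~\ref{locallaw} is simply recalled from \cite{MR3098073} (note the citation in the theorem header and the sentence ``We first recall the strong entrywise local semicircle law for sparse random matrices from \cite{MR3098073}'' preceding it), and no proof or even sketch is given in the present paper. Your outline is a faithful summary of the method of the cited reference---Schur complement, large-deviation bounds for linear and quadratic forms in the heavy-tailed $h_{ij}$ (with the $1/q$ loss coming from $\var(h_{ij}^2)\asymp 1/(Nq^2)$), stability of the quadratic self-consistent equation, and a continuity/bootstrap in $\eta$---so in that sense it is correct, but there is nothing in this paper to compare it against.

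One small remark on your sketch: the $(\kappa+\eta)^{-1/2}$ stability factor you invoke governs the \emph{averaged} quantity $m_N-m_{sc}$, not the individual $G_{ii}-m_{sc}$ or $G_{ij}$; the stated entrywise bound \eqref{indLocLaw} carries no such factor, and indeed the entrywise estimate follows more directly once $\max_i|\Upsilon_i|$ and $|m_N-m_{sc}|$ are controlled, without an additional loss at the edge.
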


Our first main results, Theorem \ref{t:gaussianfluc} and Corollary \ref{c:ERg}  concern the behavior of the extremal eigenvalues of $H$ and $A$ in the regime $N^{1/9} \ll q \ll N^{1/6}$.  Specifically, we prove that the extremal have Gaussian fluctuations governed by the 
 quantity $\cX$ as defined in  \eqref{e:defcX}. 

\begin{theorem}\label{t:gaussianfluc}
Let $H$ be as in Definition \ref{asup}, and $\cX$ be as defined in \eqref{e:defcX}. There exists a deterministic constant $L$ depending on $q$ and the cumulants $\cC_2, \cC_3, \cdots$ of $h_{ij}$ (as defined in Proposition \eqref{p:minfty}) so that the following holds. 
Let $\fc>0$ and an integer $k \geq 1$ be given.
 With overwhelming probability we have for $ 1 \leq i \leq k$, 
 \begin{align}\label{e:largeeig}
 |\lambda_i-L-\cX|\leq N^\fc \left(\frac{1}{q^6}+\frac{1}{N^{2/3}}\right),
 \end{align}
 and
 \begin{align}\label{e:smalleig}
 |\lambda_{N-i}+L+\cX|\leq N^\fc \left(\frac{1}{q^6}+\frac{1}{N^{2/3}}\right).
 \end{align}
\end{theorem}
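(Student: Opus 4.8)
\emph{Proof strategy.}
The plan is to prove an optimal local law for $m_N$ not with respect to the semicircle law, but with respect to an explicit \emph{random} probability measure whose rightmost edge sits at $L+\cX$, and then to read off eigenvalue rigidity from this local law in the standard way. The heuristic is that $\cX=N^{-1}\Tr(H^2)-1$ is exactly the fluctuation of $N^{-1}\Tr(H^2)$, i.e.\ of the effective variance of $H$, about its mean $1$; a fluctuation of the variance by $\cX$ moves the edge $2$ to $2\sqrt{1+\cX}\approx2+\cX$, and the theorem asserts this is exact up to $N^{\fc}(q^{-6}+N^{-2/3})$ --- the $N^{-2/3}$ being the inescapable rigidity scale and $q^{-6}$ the precision to which the reference edge is pinned. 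Throughout, the regime of interest is $q\gg N^{1/9}$, where $q^{-6}\lec N^{-2/3}\ll\cX\asymp(\sqrt Nq)^{-1}$, so that $\cX$ is the dominant correction.

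\emph{Step 1: recursive moment estimate.}
Following the self-consistent polynomial method of \cite{Lee2016}, I would first derive a recursive moment estimate near $z=2$: starting from $z\,m_N=N^{-1}\Tr(HG)-1$ and applying the cumulant expansion together with \eqref{indLocLaw}, one reaches a bound of the form
\begin{align*}
\Bigl|\,\bE\bigl[\bigl(P(z,m_N)+\cX\,Q(z,m_N)\bigr)\,\overline{m_N}^{\,D-1}\bigr]\,\Bigr|\;\prec\;\Bigl(\tfrac1{q^{6}}+\tfrac1{N\eta}\Bigr)\,\bE\bigl[|m_N|^{D}+\cdots\bigr],
\end{align*}
valid for $\eta=\Im z\gg N^{-1}$ and any fixed $D$, where $P(z,w)=1+zw+w^2+q^{-2}P_1(z,w)+q^{-4}P_2(z,w)$ has coefficients that are explicit polynomials in the cumulants $\cC_k$, and $Q$ is the polynomial coefficient of $\cX$. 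Two facts drive the bookkeeping: off-diagonal entries $G_{ij}$, $i\neq j$, are small (of size $q^{-1}$ at the edge by \eqref{indLocLaw}), and the $k$-th cumulant carries a factor $q^{-(k-2)}$; together with the parity gain forcing odd-order terms to produce an extra off-diagonal factor, the expansion effectively proceeds in powers of $q^{-2}$, so that collecting the $q^{-2}$ and $q^{-4}$ corrections into $P$ leaves a remainder of order $q^{-6}$. The coefficient $\cX$ appears precisely from terms carrying a factor $N^{-1}\sum_k h_{ik}^2$ (summed in $i$, i.e.\ $N^{-1}\Tr H^2$) against a nearly constant Green's function weight: replacing $h_{ik}^2$ by $\bE h_{ik}^2=1/N$ produces the polynomial $P$, whereas retaining the fluctuation produces $\cX$ (as in \eqref{e:defcX}) times an $O(1)$ factor. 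Keeping this fluctuation rather than bounding it is the main novelty relative to \cite{Lee2016}.

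\emph{Step 2: random reference measure, local law, rigidity.}
Define $\tilde m(z)$ as the solution, normalized by $\Im\tilde m>0$ in the upper half plane and $\tilde m(z)\sim-1/z$ at $\infty$, of $P(z,\tilde m)+\cX\,Q(z,\tilde m)=0$; since $|\cX|\prec(\sqrt Nq)^{-1}=o(1)$ and the polynomial corrections are $O(q^{-2})$, $\tilde m$ is a small perturbation of the deterministic solution of Proposition~\ref{p:minfty}, hence the Stieltjes transform of a symmetric probability measure $\tilde\rho$ supported on $[-\tilde L,\tilde L]$ with a square-root edge, $\Im\tilde m(E+\i0)\asymp\sqrt{(E-\tilde L)_-}$, and $\tilde L=L+\cX+O(q^{-6})$ where $L$ is the edge at $\cX=0$. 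Feeding Step~1 into a stability analysis of $P+\cX Q=0$ near its (square-root-degenerate) edge and running the usual continuity bootstrap in $\eta$ downward from $\eta\asymp1$ (where \eqref{indLocLaw} applies) gives an optimal local law $m_N(z)\approx\tilde m(z)$ for $E$ near $\pm\tilde L$ and $\eta\geq N^{-2/3+\fc}$. The standard conversion of such a local law into edge rigidity --- counting eigenvalues through $\frac1N\sum_i\eta\bigl((\lambda_i-E)^2+\eta^2\bigr)^{-1}=\Im m_N(E+\i\eta)\approx\Im\tilde m(E+\i\eta)$ at scale $\eta\asymp N^{-2/3+\fc}$ --- shows there is no eigenvalue above $\tilde L+N^{\fc}N^{-2/3}$ and at least $k$ eigenvalues in $[\tilde L-N^{\fc}N^{-2/3},\tilde L]$, whence $|\lambda_i-\tilde L|\prec N^{-2/3}$ for $1\leq i\leq k$. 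Substituting $\tilde L=L+\cX+O(q^{-6})$ gives \eqref{e:largeeig}; the same argument at the left edge $-\tilde L$, using symmetry of $\tilde\rho$, gives \eqref{e:smalleig}, and the overwhelming-probability form follows since every estimate loses only $N^{\fc}$.

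\emph{Main obstacle.}
The crux is the optimal local law with respect to the \emph{random} measure $\tilde\rho$ down to scale $N^{-2/3+\fc}$. First, the self-consistent equation degenerates at the edge, so the stability estimate must be carried out carefully on the $\kappa$-dependent scale (as in \cite{Lee2016}), and here the fact that the reference edge $\tilde L$ is itself random is an added complication. Second --- and this is exactly where $q\gg N^{1/9}$ is used --- one must show the remainder in the recursive moment estimate is genuinely of order $q^{-6}+N^{-2/3}$, not merely $q^{-4}$, for otherwise the correction $\cX\asymp(\sqrt Nq)^{-1}$ would be buried in the error; this requires exploiting the higher-order cancellations in $\Tr G$ (the fluctuation-averaging mechanism) to sufficiently high order, with a large moment $D$ to pass from expectation bounds to overwhelming-probability statements.
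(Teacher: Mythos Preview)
Your proposal is correct and follows essentially the same approach as the paper: build a higher-order self-consistent polynomial via cumulant expansion, absorb the fluctuation $\cX$ into a random polynomial $P(z,m)=P_0(z,m)+\cX\,m^2$, define the random reference measure $\tilde\rho_\infty$ as the solution $P=0$, prove a recursive moment estimate for $P(\tilde z,m_N(\tilde z))$ at the shifted argument $\tilde z=L+\cX+z$, run stability plus continuity to get the local law $m_N\approx\tilde m_\infty$, and read off rigidity at scale $N^{-2/3}$ around $\tilde L=L+\cX+O_\prec((\sqrt N q^3)^{-1})$.

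One minor correction worth flagging: the paper's recursive moment estimate (their Proposition~\ref{t:Pmoment}) does \emph{not} have remainder $q^{-6}$ directly, but rather the more structured bound
\[
\bE\bigl[|P|^{2r}\bigr]\;\prec\;\max_{s_1+s_2\ge1}\bE\Bigl[\bigl\{(q^{-3}+(N\eta)^{-1})\,\tfrac{\Im m_N\,|\partial_2 P|}{N\eta}\bigr\}^{s_1/2}\bigl(\tfrac{\Im m_N}{N\eta}\bigr)^{s_2}|P|^{2r-s_1-s_2}\Bigr],
\]
with the crucial extra factor $|\partial_2P|\asymp\sqrt{\kappa+\eta}$; the $q^{-6}$ scale only emerges \emph{after} the stability analysis and iteration, when the term $1/(q^3N\eta)$ in the resulting local law is converted into an exclusion window for eigenvalues at $\eta=N^{-2/3}$. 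Your Step~1 compresses these two stages, which is harmless at the level of strategy but would need unpacking in an actual proof. Also, the paper takes moments of $P$ itself (i.e.\ $\bE[|P|^{2r}]$) rather than $\bE[(P+\cX Q)\,\bar m_N^{D-1}]$; this makes the recursion self-closing more cleanly, but your variant would work too.
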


As an easy corollary of Theorem \ref{t:gaussianfluc}, we derive asymptotic Gaussian fluctuations for the second largest eigenvalue of the Erd\H{o}s-R{\'e}nyi graph in the 
regime $N^{-7/9}\ll p\ll N^{-2/3}$.

\begin{corollary}\label{c:ERg}
Let $A$ be the adjacency matrix of the Erd{\H o}s-R{\'e}nyi graph $G(N,p)$. We denote its eigenvalues $\mu_1\geq \mu_2\geq \cdots\geq \mu_N$. Then for $N^{-7/9}\ll p\ll N^{-2/3}$, the second largest eigenvalue of $A$ converges weakly to a Gaussian random variable as $N \to \infty$, 
\begin{align}\label{e:ERg}
\sqrt{N}\left(\mu_2-\sqrt{Np}\left(2+(Np)^{-1}-\frac{5}{4}(Np)^{-2}\right)\right)\rightarrow \cN(0, 1).
\end{align}
Here, $\cN(0,1)$ is the standard Gaussian random variable.
\end{corollary}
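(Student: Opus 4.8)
\emph{Proof of Corollary~\ref{c:ERg} (plan).}
The plan is to deduce the corollary from Theorem~\ref{t:gaussianfluc} by combining a rank-one perturbation argument with the central limit theorem for $\cX$ recorded just after \eqref{e:defcX}. Write $p=q^2/N$, so that $q=\sqrt{Np}$, and recall from the definition of $H$ that
\begin{equation}\label{e:Adecomp}
A \;=\; q\sqrt{1-q^2/N}\,H \;+\; q^2\,\vert\ev\rangle\langle\ev\vert .
\end{equation}
The matrix $H$ attached to $G(N,p)$ satisfies Definition~\ref{asup}: its entries have mean zero and variance $1/N$, and writing $\kappa_k$ for the $k$-th cumulant of a $\mathrm{Bernoulli}(q^2/N)$ variable (so $\kappa_k = q^2/N + \OO(q^4/N^2)$) one finds that the $k$-th cumulant of $h_{ij}$ equals $(k-1)!\,\cC_k/(Nq^{k-2})$ with $\cC_k = \tfrac{1}{(k-1)!}\big(1+\OO(q^2/N)\big)$; in particular the $\cC_k$ are bounded and $\cC_4 = \tfrac{1}{6}+\OO(q^2/N)\geq c$. (The diagonal entries $h_{ii}$ are deterministic of order $q/N \ll N^{-2/3}$, so this deterministic diagonal shifts every eigenvalue by $o(N^{-1/2})$ and may be ignored throughout; likewise the deterministic diagonal part of $\cX$ contributes $o(N^{-1/2})$ after multiplication by $q$.)

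Next I would reduce $\mu_2$ to the top eigenvalue of $H$. Since $q^2\vert\ev\rangle\langle\ev\vert$ is positive semidefinite of rank one, Cauchy interlacing applied to \eqref{e:Adecomp} gives $\lambda_1(B)\geq \mu_2\geq \lambda_2(B)$ with $B\deq q\sqrt{1-q^2/N}\,H$. By Theorem~\ref{t:gaussianfluc} applied to $H$ with $k=2$, both $\lambda_1(H)$ and $\lambda_2(H)$ lie within $N^{\fc}(q^{-6}+N^{-2/3})$ of $L+\cX$ with overwhelming probability, so
\begin{equation}\label{e:mu2pin}
\Big|\,\mu_2 \;-\; q\sqrt{1-q^2/N}\,(L+\cX)\,\Big| \;\leq\; q\,N^{\fc}\big(q^{-6}+N^{-2/3}\big)
\end{equation}
with overwhelming probability. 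In the range $N^{1/9}\ll q\ll N^{1/6}$, choosing $\fc$ small enough, $\sqrt N$ times the two terms on the right of \eqref{e:mu2pin} are $N^{1/2+\fc}q^{-5}\to0$ (since $q\gg N^{1/9}$) and $N^{\fc}qN^{-1/6}\to0$ (since $q\ll N^{1/6}$). Hence, with overwhelming probability,
\begin{equation}\label{e:reduced}
\sqrt N\,\Big(\mu_2 \;-\; q\sqrt{1-q^2/N}\,L\Big) \;=\; \sqrt N\, q\sqrt{1-q^2/N}\,\cX \;+\; o(1).
\end{equation}

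It remains to handle the two terms on the right of \eqref{e:reduced}. For the fluctuating term, the remark after \eqref{e:defcX} gives $\sqrt N\, q\,\cX \to \cN(0,6\cC_4)$ in distribution; since $6\cC_4\to1$ and $\sqrt{1-q^2/N}\to1$, Slutsky's lemma yields $\sqrt N\, q\sqrt{1-q^2/N}\,\cX \to \cN(0,1)$ (the mild $N$-dependence of the cumulants is immaterial, the statement being a triangular-array central limit theorem for the bounded-variance sum $\cX$). For the deterministic centering, I would insert the cumulants $\cC_k$ computed above into the explicit formula for $L$ from Proposition~\ref{p:minfty} and expand in powers of $q^{-1}=(Np)^{-1/2}$: only the first few cumulants contribute at the orders $1$, $q^{-2}$, $q^{-4}$, while all higher-order terms — as well as the factor $\sqrt{1-q^2/N}=1+\OO(q^2/N)$, which contributes only $\OO(q^3/N)=o(N^{-1/2})$ to $q\sqrt{1-q^2/N}\,L$ when $q\ll N^{1/6}$ — are negligible at precision $o(N^{-1/2})$, giving
\begin{equation}\label{e:center}
q\sqrt{1-q^2/N}\,L \;=\; \sqrt{Np}\Big(2+(Np)^{-1}-\tfrac{5}{4}(Np)^{-2}\Big) \;+\; o\big(N^{-1/2}\big).
\end{equation}
Combining \eqref{e:reduced}, \eqref{e:center} and the limit of the fluctuating term via Slutsky proves \eqref{e:ERg}.

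I expect the only genuinely computational step to be \eqref{e:center}: one must push the expansion of the self-consistent equation underlying Proposition~\ref{p:minfty} far enough to pin down $L$ to third order in $q^{-1}$ and check that the Erd\H{o}s--R\'enyi cumulants produce exactly the coefficients $2$, $+1$, $-\tfrac{5}{4}$ (in particular that the odd cumulants do not generate a $q^{-1}$ or $q^{-3}$ term at this order). Everything else — the rank-one interlacing bound \eqref{e:mu2pin}, the edge rigidity input of Theorem~\ref{t:gaussianfluc}, and the central limit theorem for $\cX$ — is essentially immediate from the results already stated.
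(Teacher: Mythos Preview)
Your proposal is correct and follows essentially the same route as the paper: pass to the normalized matrix $H$, invoke Theorem~\ref{t:gaussianfluc} for $\lambda_1(H)$ and $\lambda_2(H)$, use Cauchy interlacing with the rank-one perturbation $q^2\vert\ev\rangle\langle\ev\vert$ to sandwich $\mu_2$, and then identify the limit via the CLT for $\cX$ (with $6\cC_4\to 1$) and the explicit value $L=2+q^{-2}-\tfrac{5}{4}q^{-4}$. Your write-up is actually more careful than the paper's own proof in several places---you make explicit the error bookkeeping in \eqref{e:mu2pin}, the Slutsky step, the computation of $\cC_4$, and the diagonal-entry issue---none of which the paper spells out.
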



The following theorem concerns Tracy-Widom fluctuations for sparse random matrices as defined in Definition \ref{asup}.  In the case $q = c N^{1/6}$ the limiting distribution of the extremal eigenvalues is given by an independent sum of a Tracy-Widom and Gaussian random variable.  In the case $N^{1/6} \ll q \ll N^{1/6}$ we recover the Tracy-Widom fluctuations after subtracting $\cX$.

\begin{theorem} \label{thm:Tracy-Widom}
Let $H$ be as in Definition \ref{asup} and  $N^{1/9}\ll q\leq C N^{1/6}$.  Let   $\cX$ be as defined in \eqref{e:defcX} and denote the eigenvalues of $H$ by $\la_1,\la_2,\cdots,\lambda_N$.  Let $F : \rr^k \to \rr$ be a bounded test function with bounded derivatives.  There is a universal constant $\fc>0$ so that,
\begin{align}\begin{split} \label{eqn:htedgeb1}
&\phantom{{}={}}\ee_{H}[ F (N^{2/3} ( \lambda_1 - L-\cX ), \cdots , N^{2/3} ( \lambda_k- L-\cX ) ]\\
& = \ee_{GOE}[ F (N^{2/3} ( \mu_1 - 2 ), \cdots, N^{2/3} ( \mu_k - 2) ) ]+\O\left(N^{-\fc}\right).
\end{split}\end{align}
The second expectation  is with respect to a GOE matrix with eigenvalues $\mu_1,\mu_2,\cdots, \mu_N$. In the case that $q=CN^{1/6}$ we have
\begin{align}\begin{split}\label{eqn:htedgeb2}
&\phantom{{}={}} \ee_{H}[ F (N^{2/3} ( \lambda_1 - L ), \cdots , N^{2/3} ( \lambda_k - L ) ] \\
&= \ee_{GOE,H}[ F (N^{2/3} ( \mu_1 - 2+\cX), \cdots, N^{2/3} ( \mu_k - 2 +\cX ) ) +\O\left(N^{-\fc}\right).
\end{split}\end{align}
In the second expectation, $H$ is independent of the GOE matrix and its eigenvalues $\mu_i$. 
Analogous results hold for the smallest eigenvalues.
\end{theorem}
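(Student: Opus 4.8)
The plan is to run the standard three-step strategy in the form tailored to Gaussian divisible ensembles, the crucial point being that when $q \ll N^{1/6}$ (equivalently $p \ll N^{-2/3}$) we cannot compare $H$ directly to the GOE --- the edge fluctuation $\cX$ is of larger order than $N^{-2/3}$ --- so instead we interpolate through a sparse matrix with a small GOE component. Concretely, fix a small exponent and set $t = N^{-1/3+\omega}$ for a tiny $\omega>0$, and let $H_t = \sqrt{1-t}\,H + \sqrt{t}\,W$ with $W$ an independent GOE matrix. First I would observe that $H_t$ still satisfies the hypotheses of Definition \ref{asup} with the same sparsity parameter $q$ up to negligible corrections, and that the added GOE component does not change $\cX$ at leading order: $\cX(H_t) = (1-t)\cX + \O_\prec(\sqrt{t}/\sqrt{N}) = \cX + \O_\prec(N^{-2/3-\text{something}})$, which is below the $N^{-2/3}$ scale. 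Hence by Theorem \ref{t:gaussianfluc} (or rather the rigidity with respect to the random measure constructed in Section \ref{s:rigidity}) the edge of $H_t$ sits at $L + \cX$ up to fluctuations $\ll N^{-2/3}$, so $L$ is unchanged along the flow to within the required accuracy.

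The second step is to invoke the edge universality for Gaussian divisible ensembles of \cite{LY}, recalled in Section \ref{sec:ht}: for the initial data $H$ (whose rigidity at scale slightly above $N^{-2/3}$ with respect to the shifted edge $L + \cX$ is exactly what Section \ref{s:rigidity} provides, and which is the regularity input required by \cite{LY}), the rescaled edge eigenvalues of $H_t$ — measured from the classical edge location of $H_t$, which is $L + \cX$ up to $o(N^{-2/3})$ — converge to those of the GOE once $t \gg N^{-1/3}$. This gives
\[
\ee_{H_t}\big[F\big(N^{2/3}(\lambda_1(H_t) - L - \cX),\dots\big)\big] = \ee_{\mathrm{GOE}}\big[F\big(N^{2/3}(\mu_1 - 2),\dots\big)\big] + \O(N^{-\fc}),
\]
with the $\cX$ inside the expectation on the left because $\cX$ is a function of $H$ (through $H_t$) and is frozen at this scale.

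The third and technically heaviest step is the Green-function comparison in Section \ref{s:comparison}: I would show that the left-hand sides of the display above for $H_t$ and for $H$ agree up to $\O(N^{-\fc})$. The subtlety, and what I expect to be the main obstacle, is that one must compare the two ensembles \emph{after} recentering by the random quantity $L + \cX$, which depends on the matrix entries; a naive continuity-of-resolvent argument controls $\lambda_i - L$ but not $\lambda_i - L - \cX$. The resolution is to track $\cX$ explicitly through the interpolation: writing the comparison as a telescoping sum over entries and applying the cumulant expansion as in \cite{Lee2016}, the derivative of the correlation function in each entry $h_{ij}$ produces, besides the terms that cancel as in the Wigner case, exactly the contribution that the shift by $\cX$ is designed to absorb — this is precisely the mechanism by which $\cX$ enters \eqref{e:largeeig}. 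So the computation is: expand, identify the "dangerous" terms of order $(p^{1/2}N)^{-1} = q^{-1}N^{-1/2} \cdot q = $ (the $\cX$-scale), and check they are reproduced by differentiating $N^{2/3}(\lambda_i - L - \cX)$ in $h_{ij}$; the remaining terms are genuinely $\O(N^{-\fc} \cdot N^{-2/3})$ by the rigidity and local law in the regime $q \gg N^{1/9}$ (this is where the lower restriction on $q$ is used — it makes the error terms in the cumulant expansion summable against the $q^{-k}$ decay).

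Finally, for the boundary case $q = CN^{1/6}$ one has $(p^{1/2}N)^{-1} \asymp N^{-2/3}$, so $N^{2/3}\cX$ is an order-one Gaussian (mean zero, variance $6\cC_4 \cdot (q^2/N) = 6\cC_4 C^{-2} \cdot$const by the CLT statement in Section \ref{s:intro}) that does \emph{not} wash out and is \emph{independent} of the Tracy-Widom part coming from the bulk of the matrix — independence because $\cX$ is a linear statistic of the squared entries while the edge eigenvalue, after subtracting $\cX$, is governed by \eqref{eqn:htedgeb1} and the two live at different scales / involve asymptotically decoupled functionals of $H$. Moving $\cX$ from the $H$-side to the GOE-side of the identity then yields \eqref{eqn:htedgeb2}, with the joint law being the product of the GOE edge law and the law of $-N^{2/3}\cX$. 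The statement for the smallest eigenvalues follows verbatim after replacing $H$ by $-H$, under which $L \mapsto -L$ and $\cX \mapsto \cX$ (since $\cX$ is even in the entries), giving the opposite-sign correction asserted in the introduction.
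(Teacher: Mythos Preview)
Your overall architecture is exactly the paper's: run DBM for time $t=N^{-1/3+\omega}$, apply the edge-universality result of \cite{LY} to the Gaussian-divisible ensemble, and then close by a Green-function comparison that tracks the random shift $\cX$ through the cumulant expansion. That part is correct.

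There is, however, a genuine gap. You assert that ``$L$ is unchanged along the flow to within the required accuracy'' and therefore center $H_t$ at $L+\cX$. This is false in the entire regime $N^{1/9}\ll q\le CN^{1/6}$: the fourth cumulant of $H_t$ scales like $e^{-2t}\cC_4$, so the deterministic edge $L_t$ drifts by $\dot L_t=-12\cC_4/q^2+\O(q^{-4})$, and hence $|L_t-L|\asymp t/q^2$. For $t=N^{-1/3+\omega}$ this is $N^{-2/3+\omega}$ when $q\sim N^{1/6}$ and even larger for smaller $q$, so it always exceeds the Tracy--Widom scale. Consequently the \cite{LY} step must center $H_t$ at $L_t+\cX_t$, not at $L+\cX$; and then the comparison step must match $\lambda_i(0)-L-\cX$ against $\lambda_i(t)-L_t-\cX_t$, which is what the paper's Theorem~\ref{thm:comp} states. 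The mechanism you describe---dangerous terms in the cumulant expansion being absorbed by differentiating the shift---is right in spirit but incomplete: in the paper's continuous interpolation (Proposition~\ref{p:comp} and Proposition~\ref{p:Dterms}) there are \emph{two} cancellations, not one. The $p=3$ term $J_3$ produces exactly $12\cC_4 q^{-2}\sum_{ik}\bE[F'(X_t)G_{ik}^2]$, which cancels against $\dot L_t\sum G_{ij}^2$; separately, $\dot\cX_t$ is controlled directly by $|\dot\cX_t|\prec (t^{1/2}N^{1/2}q)^{-1}$. Your write-up only accounts for the second.

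A smaller point: your argument for independence in the $q=CN^{1/6}$ case (``asymptotically decoupled functionals'') is heuristic. The paper's route is cleaner and avoids any decoupling claim: one first proves \eqref{eqn:htedge2} \emph{conditionally on $H$}, then substitutes $F(x)=F_1(x+N^{2/3}\cX)$ and averages over $H$. Since on the right-hand side the GOE is genuinely independent of $H$, this immediately yields \eqref{eqn:htedgeb2} with $\cX$ sitting next to an independent GOE edge.
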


\section{Edge Rigidity for Sparse Random Matrices}\label{s:rigidity}
In this section we prove the following rigidity estimates for sparse random matrices in the vicinity of the spectral edges. The following proposition states that the Stieltjes transform of the empirical eigenvalue density is close to the Stieltjes transform of a \emph{random} measure $\tilde \rho_\infty$, which is explicitly constructed in Proposition \ref{p:tminfty}.

Fix an arbitrarily small constant $\fa$ and denote the shifted spectral domain 
\begin{align}\label{e:defD}
\cD=\left\{z=\kappa+\ri\eta\in \bC_+:  | \kappa| \leq 1 ,0 \leq \eta\leq 1, |\kappa|+\eta\geq N^\fa\left(\frac{1}{q^3 N^{1/2}}+\frac{1}{q^3 N\eta}+\frac{1}{(N\eta)^2}\right)\right\}.
\end{align}

\begin{theorem}\label{thm:edgerigidity}
Let $H$ be as in Definition \ref{asup}. 
Let $m_N(z)$ be the Stieltjes transform of its eigenvalue density. There exists a deterministic constant $L$ (as defined in Proposition \eqref{p:minfty}), and an explicit random symmetric measure $\tilde \rho_\infty$ (as defined in Proposition \ref{p:tminfty}) with Stieltjes transform $\tilde m_\infty$. Fix $\tilde z = L+\cX+ z$, where $z=\kappa+\ri \eta$ and $\cX$ is defined in \eqref{e:defcX} so that the following holds. With overwhelming probability, uniformly for any $z\in \cD$, we have
\begin{itemize}
\item If $\kappa\geq 0$,
\begin{align}\label{e:stateoutS}
|m_N(\tilde z)-\tilde m_\infty(\tilde z)|\prec \frac{1}{\sqrt{|\kappa|+\eta}}\left(\frac{1}{N\eta^{1/2}}+\frac{1}{N^{1/2}q^{3/2}}+\frac{1}{(N\eta)^2}+\frac{1}{q^3 N\eta}\right),
\end{align}
\item If $\kappa\leq 0$,
\begin{align}\label{e:stateinS}
|m_N(\tilde z)-\tilde m_\infty(\tilde z)|\prec \frac{1}{N\eta}+\frac{1}{(N\eta)^{1/2}q^{3/2}},
\end{align}
\end{itemize}
The analogous statement holds for $\tilde z=-L-\cX+z$.
\end{theorem}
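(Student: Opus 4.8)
The plan is to derive a \emph{recursive moment estimate} (a self-consistent equation with random coefficients) for the normalized trace $m_N(\tilde z)$, in the spirit of \cite{Lee2016}, but tracking the observable $\cX$ as a genuine random coefficient rather than replacing it by its expectation. First I would expand $1+\tilde z m_N(\tilde z)+m_N(\tilde z)^2$ (the quantity that vanishes to leading order by the semicircle self-consistent equation) using the identity $z m_N(z) = -1 + \frac1N\sum_i H_{ii}G_{ii} - \frac1N\sum_{i\ne j}H_{ij}G_{ji}$ together with the cumulant expansion (integration by parts) applied to the random entries $h_{ij}$. The second- and higher-order cumulants $\cC_k/(Nq^{k-2})$ generate a hierarchy of terms; the key point is that the $\cC_2$-term reproduces $m_N^2$, while the leading nontrivial correction at the edge is precisely the one producing the shift $L+\cX$: the $\frac1N\sum_{ij}h_{ij}^2$ piece that appears is $\frac1N + \frac1N(\cX)$ by definition \eqref{e:defcX}, so centering at $\tilde z = L+\cX+z$ absorbs both the deterministic correction $L$ (collecting all the $q$-dependent cumulant contributions, defined in Proposition~\ref{p:minfty}) and the random fluctuation $\cX$. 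I would carry the expansion to sufficiently high order in $1/q$ so that the remainder is controlled by the error terms in \eqref{e:stateoutS}–\eqref{e:stateinS}; the a-priori input is the entrywise local law, Theorem~\ref{locallaw}, which controls $G_{ij}$ down to $\eta \gg 1/N$ with error $1/q + \sqrt{\Im m_{sc}/(N\eta)} + 1/(N\eta)$.

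The second step is to solve the resulting self-consistent equation. One gets $P(m_N) = \O_\prec(\text{error})$ where $P$ is a polynomial whose coefficients involve $\cX$ and the cumulants and which, at leading order, is the characteristic polynomial of the semicircle law deformed by the $1/q^k$ corrections. I would define $\tilde m_\infty$ as the exact solution of $P(\tilde m_\infty)=0$ with the correct (Stieltjes-transform) branch, and $\tilde\rho_\infty$ as the associated measure via the Stieltjes inversion — this is the content deferred to Proposition~\ref{p:tminfty}, and it is where the explicit random measure is constructed, with edge shifted to $\tilde z = L+\cX$. Then a standard stability analysis of the polynomial near its edge (the discriminant vanishes like a square root, so $|m_N - \tilde m_\infty|$ is controlled by $\sqrt{\text{error}}/\sqrt{|\kappa|+\eta}$ outside the spectrum and by $\text{error}/\sqrt{\Im \tilde m_\infty \cdot(\ldots)}$ inside) converts the recursive moment estimate into the bounds \eqref{e:stateoutS}–\eqref{e:stateinS}. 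The dichotomy between $\kappa \geq 0$ and $\kappa \leq 0$ is exactly the outside-vs-inside-the-bulk dichotomy of the stability estimate: outside one has the $1/\sqrt{\kappa+\eta}$ blow-up, inside $\Im \tilde m_\infty \asymp 1$ so one gets the cleaner bound. To upgrade from an $\prec$ bound at each fixed $z$ to a statement holding with overwhelming probability \emph{uniformly} on $\cD$, I would use a standard net argument together with the Lipschitz continuity of both sides in $z$ (with polynomial-in-$N$ Lipschitz constant, since $\eta \geq 1/N$ up to the lower boundary encoded in $\cD$).

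The analogous statement for $\tilde z = -L-\cX+z$ follows by the symmetry $H \to -H$, which maps the largest eigenvalues to the smallest and leaves $\cX$ invariant (since $\cX$ depends only on $h_{ij}^2$); the self-consistent polynomial $P$ has the corresponding reflection symmetry $m \mapsto -m(-\bar z)$ because only even cumulants contribute to the edge shift at the relevant order, or more precisely because the local law and all the expansion terms respect this symmetry once $L$ is chosen consistently. I would simply remark that running the same argument at the left edge gives the claimed bound, and that the random measure $\tilde\rho_\infty$ is symmetric by construction (it is the measure attached to the symmetric polynomial $P$), which is why both edges are shifted by $\pm(L+\cX)$ with opposite signs.

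The main obstacle, and where the real work lies, is the cumulant expansion bookkeeping in the first step: one must expand to order roughly $q^{-6}$ (to reach the target error, since $1/q^6$ appears on the right side of \eqref{e:largeeig}), identify which monomials in the Green's function entries contribute to the deterministic shift $L$, which contribute to the random shift $\cX$, and show that \emph{all} remaining terms — including the ``off-diagonal'' $\frac1N\sum_{i\ne j}h_{ij}G_{ij}$ contributions and the higher fluctuation-averaging terms — are genuinely smaller, i.e. bounded by the error terms in the theorem. This requires a careful fluctuation-averaging estimate (the gain from summing many nearly-independent centered quantities, as in \cite{BHKY, Lee2016, MR3085669, MR3119922}), and the delicate point is that the gain must be quantified sharply enough at the edge ($\eta$ small) that the error does not swamp the $1/q^6$ and $1/N^{2/3}$ scales we are trying to resolve. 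Getting the precise form of the admissible self-consistent equation — in particular checking that $\cX$ enters \emph{linearly} as a shift of the spectral parameter and not in some more complicated way — is the crux that makes the random-measure approach work.
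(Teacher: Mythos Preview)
Your proposal is essentially the same approach as the paper's: cumulant expansion to derive a recursive moment estimate for a random self-consistent polynomial $P(z,m)=P_0(z,m)+\cX m^2$, followed by a stability analysis near the edge. Two points deserve sharpening.

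First, the mechanism by which $\cX$ enters is not quite the one you describe. In the paper, $P_0$ is first constructed from the \emph{expectation} $\bE[1+zm_N]$ (Proposition~\ref{p:DSE}), where no $\cX$ appears. The random term $\cX m^2$ is then added by hand to form $P$, and its role is revealed only in the \emph{high-moment} computation $\bE[|P(\tilde z,m_N(\tilde z))|^{2r}]$ (Proposition~\ref{t:Pmoment}): when the cumulant expansion is applied to $h_{ij}G_{ij}P^{r-1}\bar P^r$, the derivatives $\partial_{ij}$ can hit the $P^{r-1}\bar P^r$ factors, and at orders $p=2,3$ this produces terms of the form $G_{ii}G_{jj}\partial_{ij}^{p-1}(P^{r-1}\bar P^r)$ that are individually too large. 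A separate cumulant expansion of $\bE[\cX m_N^2 P^{r-1}\bar P^r]$ produces matching terms $m_N^2\partial_{ij}^{p-1}(P^{r-1}\bar P^r)$, and the difference $m_N^2-G_{ii}G_{jj}$ is small enough. So $\cX$ is not ``absorbed into $\tilde z$'' at the level of the expansion; it is a coefficient engineered for this cancellation, and the edge shift $\tilde L\approx L+\cX$ is a consequence (Proposition~\ref{p:tminfty}). Your heuristic that $\cX$ arises from $\tfrac1N\sum h_{ij}^2$ is morally consistent with this, but the actual cancellation is more delicate and lives at the $2r$-th moment level, which is also what gives the overwhelming-probability statement via Markov.

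Second, the stability step in the paper is a two-stage bootstrap that your sketch omits. One first uses the moment bound together with the Taylor expansion $P\approx \partial_2P\cdot(m_N-\tilde m_\infty)+(m_N-\tilde m_\infty)^2$ to prove the weak a-priori bound $|m_N-\tilde m_\infty|\prec\sqrt{|\kappa|+\eta}$. This is then fed back into the moment estimate (which depends on $\Im m_N$ and $|\partial_2P|$, both controlled once $\Lambda\prec\sqrt{|\kappa|+\eta}$) to get a sharper bound on $|P|$, and the stability lemma (Proposition~\ref{p:stable}) converts this into \eqref{e:stateoutS}--\eqref{e:stateinS}. For $\kappa\ge0$ a further self-improving iteration is needed, since the right-hand side initially still contains $\Lambda^{1/2}$. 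Your description of the stability dichotomy is correct in spirit, but the iteration is where the specific error terms $\tfrac{1}{(N\eta)^2}+\tfrac{1}{q^3N\eta}$ in \eqref{e:stateoutS} come from.
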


We postpone the proof of Theorem \ref{thm:edgerigidity} to Section \ref{s:construct} and \ref{s:highm} and \ref{s:proofrigidity}. 

\subsection{Proof of Theorem \ref{t:gaussianfluc} and Corollary \ref{c:ERg}}
In this section, we prove Theorem \ref{t:gaussianfluc} and Corollary \ref{c:ERg} which follow easily from Theorem \ref{thm:edgerigidity}. 
\begin{proof}[Proof of Theorem \ref{t:gaussianfluc}]
We only prove \eqref{e:largeeig}, as \eqref{e:smalleig} follows from the same argument.  Note that the spectral edge of the random measure $\tilde \rho_\infty$ satisfies \eqref{e:esttL}. 
By fixing a sufficiently small $\fc>0$ and taking 
\begin{align}
\eta=\frac{1}{N^{2/3}},\quad \kappa\geq N^{\fc}\left(\frac{1}{q^6}+\frac{1}{N^{2/3}}\right),
\end{align}
in  \eqref{e:stateoutS}, we get that
\begin{align}\label{e:mNbound}
|m_N(\tilde z)|=\O\left(\frac{\eta}{\sqrt{\kappa+\eta}}\right)+\O_{\prec}\left(\frac{1}{\sqrt{|\kappa|+\eta}}\left(\frac{1}{N\eta^{1/2}}+\frac{1}{N^{1/2}q^{3/2}}+\frac{1}{(N\eta)^2}+\frac{1}{q^3 N\eta}\right)\right)
\ll \frac{1}{N\eta},
\end{align}
with overwhelming probability, where we used \eqref{eqn:imtilminf}. The equation 
\eqref{e:mNbound} implies that there is no eigenvalue in the interval $[L+\cX+\kappa-\eta,L+\cX+\kappa+\eta]$. Otherwise,
if there was an eigenvalue, $\lambda_i$ in the spectral window $[L+\cX+\kappa-\eta,L+\cX+\kappa+\eta]$, we would have that
\begin{align}
\text{Im} [m_N(\tilde z)] \geq \frac{1}{N} \Im\left[\frac{1}{\lambda_i -(L+\cX+\kappa +\ri \eta)}\right] \geq \frac{1}{2N \eta},
\end{align}
contradicting \eqref{e:mNbound}. It follows that with overwhelming probability we have
\begin{align}\label{e:upperbound}
\lambda_1-L-\cX\leq N^{\fc}\left(\frac{1}{q^6}+\frac{1}{N^{2/3}}\right).
\end{align}

For the lower bound,  by the same argument as in \cite[Lemma B.1]{MR2639734} using the Helffer-Sj{\"o}strand formula, we have that with overwhelming probability
\begin{align}\label{e:lowerbound}
\#\{i:\lambda_i\in (L+\cX-\kappa, \infty))\}=\int_{L+\cX-\kappa}^{\infty}\rho_{\infty}(x)\rd x+ \oo\left(\kappa^{3/2}\right)
=\frac{2}{3\pi}\kappa^{3/2}+ \oo\left(\kappa^{3/2}\right),
\end{align}
where
\begin{align}
\kappa= N^{\fc}\left(\frac{1}{q^6}+\frac{1}{N^{2/3}}\right).
\end{align}
The claim \eqref{e:largeeig} follows from \eqref{e:upperbound} and \eqref{e:lowerbound} after taking $\fc >0$ sufficiently small.

\end{proof}

\begin{proof}[Proof of Corollary \ref{c:ERg}]
Consider the following normalized adjacency matrix 
\begin{align}
H\deq \frac{A-q^2\vert \ev \rangle \langle \ev \vert
}{q\sqrt{1-q^2/N}},
\end{align}
where $q=(Np)^{1/2}$. We denote the eigenvalues of $H$ by $\la_1\geq \la_2\geq\cdots\geq \la_N$.  It follows from Theorem \ref{t:gaussianfluc} that for any fixed index $i$,
\begin{align}\label{e:Gaussian}
\sqrt{N}q(\la_i-L)\rightarrow \cN(0,1), \quad L=2+\frac{1}{q^2}-\frac{5}{4q^4}.
\end{align}
By the Cauchy interlacing theorem,
\begin{align}\label{e:interlace}
\sqrt{1-q^2/N}\la_2\leq \mu_2\leq q\sqrt{1-q^2/N}\la_1,
\end{align}
and the claim \eqref{e:ERg} follows by combining \eqref{e:Gaussian}  and \eqref{e:interlace}.

\end{proof}


\subsection{Construction of the higher order self-consistent equation}
\label{s:construct}
In this section, we construct the higher order self-consistent equations for sparse random matrices. We also derive some useful estimates on the equilibrium measure and its Stieltjes transform. 

\begin{proposition}\label{p:DSE}
Let $H$ be as in Definition \ref{asup} and fix a large constant $\fb>0$. Let $m_N(z)$ be the Stieltjes transform of the empirical eigenvalue density of $H$. There exists a polynomial (the higher order self-consistent equation)
\begin{align}
P_0(z,m)=1+zm+Q(m),
\end{align}
where 
\begin{align}\label{e:defQ}
Q(m)=m^2 +  \frac{1}{q^2}\left(a_2 m^4 +a_3 m^6+\cdots\right),
\end{align}
is an even polynomial (of some high degree which depends on $q$) of $m$ with coefficients $a_2,a_3\cdots=\O(1)$ that depend on $q$ and the cumulants $\cC_2,\cC_3,\cdots$ of $h_{ij}$, such that  uniformly for any $z=E+\ri\eta$ with $-\fb\leq E\leq \fb$, $1/N\ll \eta\leq \fb$,
 we have
\begin{align}\label{e:defP0}
\bE[P_0(z, m_N(z))]\prec \frac{\bE[\Im[m_N(z)]]}{N\eta}.
\end{align}
\end{proposition}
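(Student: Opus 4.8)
\emph{The plan.} The starting point is the resolvent identity $HG(z)=I+zG(z)$, which upon taking the normalized trace gives the exact relation
\begin{equation}
1+z\,m_N(z)=\frac{1}{N}\sum_{i,j}h_{ij}\,G_{ji}(z).
\end{equation}
I would take expectations and expand each $\bE[h_{ij}G_{ji}]$ by the cumulant (generalized Stein) expansion for independent variables, truncated at a $q$-dependent order $\ell$:
\begin{equation}
\bE[h_{ij}G_{ji}]=\sum_{k=1}^{\ell}\frac{\cC_{k+1}}{Nq^{k-1}}\,\bE\bigl[\partial_{h_{ij}}^{k}G_{ji}\bigr]+R_{\ell+1}(i,j),
\end{equation}
where $\cC_{k+1}/(Nq^{k-1})$ is the $(k+1)$-st cumulant of $h_{ij}$ divided by $k!$ (Definition \ref{asup}). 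Using $|h_{ij}|\prec q^{-1}$ and $\bE|h_{ij}|^{k}\lesssim (Nq^{k-2})^{-1}$, the deterministic bound $|G_{ab}|\le\eta^{-1}$ (improved to $\O_\prec(1)$ by \eqref{indLocLaw}), and the fact that $\partial_{h_{ij}}^{k}G_{ji}$ is a sum of $C_k$ monomials in resolvent entries, the summed remainder $\frac{1}{N}\sum_{ij}R_{\ell+1}(i,j)$ is negligible once $\ell$ is a large enough constant determined by the sparsity exponent; the resulting polynomial $Q$ then has degree $\asymp\ell$, matching the statement.

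\emph{Extraction of the polynomial.} Using $\partial_{h_{ij}}G=-G(E_{ij}+E_{ji})G$ for $i\ne j$ (the diagonal indices $i=j$ carry a prefactor $N^{-2}\sum_i$ and are negligible), each $\partial_{h_{ij}}^{k}G_{ji}$ is an explicit finite sum of monomials in $\{G_{ii},G_{jj},G_{ij}\}$, and a parity count shows that every such monomial contains a number of off-diagonal factors $G_{ij}$ ($i\ne j$) congruent to $k-1$ modulo $2$. Hence for odd $k$ there is exactly one fully diagonal monomial, a constant multiple of $(G_{ii}G_{jj})^{(k+1)/2}$ (by the $i\leftrightarrow j$ symmetry), whereas for even $k$ there is none. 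Using $\frac{1}{N^2}\sum_{ij}(G_{ii}G_{jj})^{r}=\bigl(\frac{1}{N}\sum_iG_{ii}^{r}\bigr)^{2}$ together with the local law \eqref{indLocLaw} and the fluctuation-averaging estimates for sparse matrices of \cite{MR3098073} (which replace $\frac{1}{N}\sum_iG_{ii}^{r}$ by $m_N^{r}$ with an error that, after the $q^{-(k-1)}$ prefactor, is absorbable), the diagonal part of the order-$k$ term equals a constant times $\cC_{k+1}\,q^{-(k-1)}\,\bE[m_N^{k+1}]$ up to error $\prec\bE[\Im m_N]/(N\eta)$; the $k=1$ term produces $-\bE[m_N^2]$, and its off-diagonal piece $-\frac{1}{N^2}\sum_{ij}G_{ij}^2=-\frac1N m_N'(z)$ is $\prec\bE[\Im m_N]/(N\eta)$ because $|m_N'(z)|\le\Im m_N(z)/\eta$. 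Since only odd $k$ contribute diagonal monomials, only even powers of $m_N$ and even powers of $q^{-1}$ appear; collecting them (absorbing the $q^{-4},q^{-6},\dots$ terms into the coefficients) gives the even polynomial $Q(m)=m^2+q^{-2}(a_2m^4+a_3m^6+\cdots)$ with $a_j=\O(1)$ depending on $q$ and the $\cC_k$.

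\emph{Off-diagonal terms.} The remaining monomials each contain at least one factor $G_{ij}$ with $i\ne j$ — for even $k$, all monomials are of this type. These are estimated using \eqref{indLocLaw}, the Ward identity $\sum_j|G_{ij}|^2=\eta^{-1}\Im G_{ii}$, and, where the direct bound is insufficient, a second cumulant expansion that ``resolves'' an off-diagonal entry via $G_{ij}=z^{-1}\sum_k h_{ik}G_{kj}$ ($i\ne j$). The leading new piece of such a resolution reproduces $\frac1N\sum_kG_{kk}\approx m_N$ times the same structure — a geometric recursion with ratio $|1+m_N/z|^{-1}\asymp1$ on the relevant domain — while the remaining pieces either pick up an extra $q^{-1}$ (closing up the open off-diagonal entry with another cumulant) or strictly increase the number of off-diagonal factors and hence shrink. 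Inducting on the number of off-diagonal factors, as in \cite{Lee2016}, one shows that every such contribution is either $\prec\bE[\Im m_N]/(N\eta)$ or feeds back into $Q$ at an even order in $m$ and an even power of $q^{-1}$; the $z^{-1}$ factors generated this way must cancel in aggregate, so that $P_0$ is a genuine polynomial of degree one in $z$. Combined with the previous step, this yields $\bE[P_0(z,m_N(z))]\prec\bE[\Im m_N(z)]/(N\eta)$.

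\emph{Main difficulty.} The technical heart is the bookkeeping of this iterated expansion: the number of monomials proliferates factorially with the depth, so one must organize it — via a stopping rule and an ``abstract polynomial'' formalism in the spirit of \cite{Lee2016,schnelliedge1} — so as to verify simultaneously that (i) every non-negligible term collapses to a monomial in $m_N$ of the correct parity, (ii) no contribution of odd order in $q^{-1}$ survives into $Q$ and the spurious $z^{-1}$ factors cancel, and (iii) the accumulated errors remain $\prec\bE[\Im m_N]/(N\eta)$ uniformly on the spectral domain of the proposition, in particular near the spectral edge where $\Im m_N$ is small and the power counting is tight. Pinning down the cancellations so that the correction to the semicircle equation $1+zm+m^2=0$ is of relative size exactly $q^{-2}$, and not larger, is the delicate point.
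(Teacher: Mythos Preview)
Your overall plan---cumulant expansion of $\bE[1+zm_N]=\frac1N\sum_{ij}\bE[h_{ij}G_{ji}]$, parity classification of the monomials in $\partial_{ij}^{k}G_{ji}$, and separation into diagonal and off-diagonal contributions---matches the paper's. The genuine gap is in how you convert the purely diagonal monomials to powers of $m_N$. You write $\frac{1}{N^2}\sum_{ij}(G_{ii}G_{jj})^{r}=\bigl(\frac1N\sum_i G_{ii}^{r}\bigr)^{2}$ and then invoke fluctuation averaging to replace $\frac1N\sum_i G_{ii}^{r}$ by $m_N^{r}$, claiming the discrepancy is $\prec\bE[\Im m_N]/(N\eta)$ after the $q^{-(k-1)}$ prefactor. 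This fails in the sparse regime. Writing $G_{ii}=m_N+\Delta_i$ with $\sum_i\Delta_i=0$ and $|\Delta_i|\prec\Psi:=q^{-1}+(N\eta)^{-1/2}$, the linear term vanishes but the quadratic term $\frac1N\sum_i\Delta_i^2$ remains, and this has \emph{expectation} of order $\Psi^2\asymp q^{-2}$ (it is driven by $\var(\sum_k h_{ik}^2)\asymp q^{-2}$), so no averaging improves it. With the $k=3$ prefactor $q^{-2}$ the residual is $\asymp q^{-4}$, which is $\gg \Im m_N/(N\eta)\asymp N^{-2/3}$ at the edge whenever $q\lesssim N^{1/6}$---precisely the regime of Theorems~\ref{t:gaussianfluc} and~\ref{thm:Tracy-Widom}. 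Nor can you feed this residual back into $Q$: $\frac1N\sum_i\Delta_i^2=\frac1N\sum_iG_{ii}^2-m_N^2$ is exactly the $r=2$ instance of the quantity you are trying to control, so the scheme is circular.

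The paper's device here is different and essential. For $r_1\ge2$ it applies the identity $1+zG_{aa}=\sum_m h_{am}G_{am}$ twice, at a \emph{fresh} index $k$ and at $i$, and subtracts; after a further cumulant expansion the leading ($\cC_2$) contributions cancel exactly, leaving only terms with at least two off-diagonal factors (bounded by $\Im m_N/(N\eta)$ via Ward) or terms carrying an extra $q^{-1}$. Averaging over the new index $k$ then gives
\[
\frac{1}{N^2}\sum_{ij}\bE[G_{ii}^{r_1}G_{jj}^{r_2}]=\frac{1}{N^3}\sum_{ijk}\bE[G_{kk}G_{ii}^{r_1-1}G_{jj}^{r_2}]+(\text{higher order in }q^{-1})+\O_\prec\!\Bigl(\frac{\bE[\Im m_N]}{N\eta}\Bigr),
\]
i.e.\ one power of $G_{ii}$ is traded for a $G_{kk}$ at a new index. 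Iterating reduces all exponents to $1$, producing $m_N^{r_1+r_2}$; the accumulated ``higher order in $q^{-1}$'' pieces are of the same form and are recursively absorbed into $Q$, not discarded. This index-splitting cancellation is what forces the correction to $1+zm+m^2$ to start at order $q^{-2}$, and it is missing from your proposal. (Your treatment of single off-diagonal factors via $G_{ij}=z^{-1}\sum_k h_{ik}G_{kj}$ also differs from the paper's minor expansion in Proposition~\ref{p:one-off} and creates the $z^{-1}$-cancellation issue you flag, but this is secondary to the gap above.)
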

The proof uses the cumulant expansion to compute the expectations, similar to the derivation of Stein's lemma. The cumulant expansion was used in \cite{MR3678478, Lee2016,BC} to study the spectrum of random matrices. We postpone the proof to the end of this section.

\begin{remark}
The same idea was first used in \cite{BHKY} to derive the higher order self-consistent equation for the adjacency matrices of $d$-regular graphs.
\end{remark}

\begin{remark}
The first few terms of $P_0$ are given by
\begin{align}\label{e:ms-ceqn}
P_0(z,m)=1+zm+m^2+\frac{6\cC_4}{q^2}m^4+\frac{120\cC_6}{q^4}m^6+\O\left(\frac{1}{q^6}\right).
\end{align}
\end{remark}

We recall $\cX$ as defined in \eqref{e:defcX}, which characterizes the fluctuation of the number of edges if $H$ is the normalized adjacency matrix of the Erd{\H o}s R{\'e}nyi graph. We define the random polynomial $P(z, m)$ as
\begin{align}
P(z, m)=P_0(z,m)+{\cX} m^2.
\end{align}

The solutions $m_{\infty}(z)$ of  $P(z, m_{\infty}(z))=0$ and  $\tilde m_{\infty}(z)$ of $P(z, \tilde m_{\infty}(z))=0$ define holomorphic functions from the upper half plane $\bC_+$ to itself. It turns out that they are Stieltjes transforms of probability measures $\rho_\infty$ and $\tilde \rho_\infty$ respectively. The following lemmas collect some properties of $m_{\infty}(z), \tilde m_{\infty}(z)$ and the measures $\rho_\infty$ and $\tilde \rho_\infty$.

\begin{proposition}\label{p:minfty}
There exists a deterministic algebraic function $m_{\infty}: \bC_+\rightarrow \bC_+$, which depends on the cumulants $\cC_2,\cC_3,\cdots$ of $h_{ij}$, such that the following holds:
\begin{enumerate}
\item $m_{\infty}$ is the solution of the polynomial equation, $P_0(z, m_{\infty}(z))=0$.
\item $m_{\infty}$ is the Stieltjes transform of a deterministic symmetric probability measure $\rho_{\infty}$. Moreover, $\supp \rho_{\infty}=[-L, L]$, where $L$ depends on the coefficients of $P_0$,  and $\rho_{\infty}$ is strictly positive on $(-L,L)$ and has square root behavior at the edge. 
\item We have the following estimate on the imaginary part of $m_{\infty}$, 
\begin{align} \label{eqn:imtilminf}
\Im[m_{\infty}(E+\ri \eta)]\asymp \left\{
\begin{array}{cc}
\sqrt{\kappa+\eta}, & \text{if $E\in [-L, L]$,}\\
\eta/\sqrt{\kappa+\eta}, & \text{if $E\not\in [- L, L]$,}
\end{array}
\right.
\end{align}
and 
\begin{align}
|\del_2 P_0(z,m_{\infty}(z))|\asymp \sqrt{\kappa+\eta},\quad 
\del_2^2 P_0(z,m_{\infty}(z))=1+\OO(1/q^2),
\end{align}
where $\kappa =\dist(\Re[z], \{- L,L\})$.
\end{enumerate}
\end{proposition}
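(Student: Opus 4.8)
The plan is to analyze the algebraic equation $P_0(z,m)=0$ as a perturbation of the semicircle equation $1+zm+m^2=0$, whose solution is $m_{sc}$. First I would set up the perturbative construction: write $P_0(z,m) = 1+zm+m^2 + q^{-2}R(m)$ where $R(m)=a_2m^4+a_3m^6+\cdots$ has $\O(1)$ coefficients, and apply the implicit function theorem on the region where $\del_2(1+zm+m^2) = z+2m = -\partial_z(\text{disc})$ is bounded away from zero, i.e. away from the edges. This gives a holomorphic function $m_\infty$ on $\bC_+$ with $m_\infty = m_{sc} + \O(q^{-2})$ on compact subsets of the bulk, and more generally $|m_\infty - m_{sc}| \lesssim q^{-2}/\sqrt{\kappa+\eta}$ by tracking the size of $\del_2 P_0(z,m_{sc}) \asymp \sqrt{\kappa+\eta}$ (the square-root vanishing of $z+2m_{sc}$ at the semicircle edges $\pm 2$). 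To get a global holomorphic map $\bC_+\to\bC_+$ I would invoke that $m_\infty$ is an algebraic function with no branch points in the open upper half plane for $N$ large (the branch points, which are the edges $\pm L$, are $\O(q^{-2})$-perturbations of $\pm 2$ and hence real), so the branch selected by $m_\infty(z)\to 0$ as $z\to i\infty$ extends holomorphically; that it maps into $\bC_+$ follows because $\Im m_\infty > 0$ on a neighborhood of $i\infty$ and $m_\infty$ cannot take real values in $\bC_+$ without creating a branch point there.

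Next, to identify $m_\infty$ as a Stieltjes transform, I would use the standard criterion: a holomorphic $f:\bC_+\to\bC_+$ with $f(z)\sim -1/z$ as $z\to i\infty$ is the Stieltjes transform of a probability measure $\rho_\infty$ (by the Nevanlinna representation, checking the mass is $1$ from the $-1/z$ asymptotics, which is visible from $P_0(z,m) = 1+zm+\cdots$). Symmetry $\rho_\infty(x)=\rho_\infty(-x)$ follows from the evenness of $Q$: $-\overline{m_\infty(-\bar z)}$ solves the same equation and has the same asymptotics, hence equals $m_\infty(z)$. For the support and edge behavior, I would analyze the boundary values $m_\infty(E+i0)$: where the relevant branch of the algebraic equation has nonzero imaginary part one gets density, where it is real one does not; the edge $L$ is the largest real branch point of the discriminant of $P_0(z,\cdot)$, and near it one has the generic square-root behavior of a simple branch point of an algebraic function, giving $\Im m_\infty(E+i0) \asymp \sqrt{(E-L)_-}$ and density $\rho_\infty(x)\asymp\sqrt{L-x}$. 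Strict positivity on $(-L,L)$ follows since $\rho_\infty$ is a $q^{-2}$-perturbation of the semicircle density (in say the $L^\infty$ or Hausdorff sense) which is strictly positive on $(-2,2)$, so for $N$ large no new zeros of the density appear in the interior; alternatively one checks the discriminant has no real roots strictly between $-L$ and $L$.

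Finally the quantitative estimates \eqref{eqn:imtilminf} and the bounds on $\del_2 P_0, \del_2^2 P_0$ at $m_\infty$ follow by comparison to the semicircle case combined with the perturbative bound $|m_\infty - m_{sc}| = \O(q^{-2}/\sqrt{\kappa+\eta})$: one has $\del_2 P_0(z,m_\infty) = z+2m_\infty + \O(q^{-2}) = (z+2m_{sc}) + \O(q^{-2}) + \O(|m_\infty-m_{sc}|) \asymp \sqrt{\kappa+\eta}$ using the known behavior $|z+2m_{sc}|\asymp\sqrt{\kappa+\eta}$ and the fact that on $\cD$ one has $\kappa+\eta \gg q^{-4}$, so the perturbation is subleading; and $\del_2^2 P_0(z,m_\infty) = 2 + \O(q^{-2}\cdot m_\infty^2) = 1+\O(q^{-2})$ — here I note the stated ``$1+\O(1/q^2)$'' should read ``$2+\O(1/q^2)$'' unless a normalization convention absorbs the factor, and I would reconcile this with the convention used elsewhere. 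The imaginary part estimate \eqref{eqn:imtilminf} is read off from differentiating $P_0(z,m_\infty)=0$: $\Im m_\infty = -\Im(zm_\infty+\cdots)/\ldots$, or more cleanly from the square-root edge structure established above, matching the semicircle asymptotics $\Im m_{sc}\asymp\sqrt{\kappa+\eta}$ inside and $\asymp\eta/\sqrt{\kappa+\eta}$ outside.

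\emph{Main obstacle.} The delicate point is the uniform control of the algebraic function $m_\infty$ all the way down to the edge — ensuring that the edge $L$ is a simple branch point with no other real branch points nearby and no spurious interior singularities — since the degree of $P_0$ grows with $q$ and one cannot simply write down roots. The right way to handle this is to work with the discriminant of $P_0(z,\cdot)$ as a polynomial in $z$ and show it is an $\O(q^{-2})$-analytic perturbation of the semicircle discriminant $z^2-4$, so its root structure (a pair of simple real roots near $\pm 2$, nothing else relevant in a fixed complex neighborhood) is preserved; this is where the lower bound assumption $\cC_4 \geq c$ and $0 < q \lesssim \sqrt N$ enter to keep the correction genuinely of order $q^{-2}$ and the edge genuinely square-root.
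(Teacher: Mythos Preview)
Your outline takes a genuinely different route from the paper. The paper does not use the implicit function theorem or the discriminant of $P_0$; instead it inverts the relation $P_0(z,m)=0$ explicitly as $z=R(m):=-1/m-m-q^{-2}(a_2m^3+a_3m^5+\cdots)$, locates the two real critical points $\pm\tau$ of $R$ by Rouch\'e's theorem (comparing $w^2R'(w)=0$ to $1-w^2=0$ on a fixed disk), and then shows via the argument principle that $R$ is a biholomorphism from a simply connected region bounded by a curve $\Gamma\subset\bC_+$ and the segment $[-\tau,\tau]$ onto $\bC_+$. The function $m_\infty$ is the inverse of this biholomorphism, and the edge estimates follow from the Taylor expansion $z=L+\tfrac12 R''(-\tau)(m+\tau)^2(1+\cE)$ at the critical point, together with the identity $\del_2 P_0(z,m_\infty(z))=-m_\infty(z)\,R'(m_\infty(z))$.

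Your perturbative approach is fine in the bulk but has a gap at the edge. The bound $|m_\infty-m_{sc}|\lesssim q^{-2}/\sqrt{\kappa+\eta}$ and the deduction $\del_2 P_0(z,m_\infty)=(z+2m_{sc})+\O(q^{-2})\asymp\sqrt{\kappa+\eta}$ both require $\kappa+\eta\gg q^{-4}$, which you invoke, but the proposition demands these estimates uniformly down to $z=L$. There the semicircle comparison degenerates: $m_{sc}$ has its square-root singularity at $\pm 2$, not at $\pm L$, and $|L-2|\asymp q^{-2}$ swamps the perturbation. Your proposed fix---treating the discriminant of $P_0(z,\cdot)$ as an ``$\O(q^{-2})$-analytic perturbation of $z^2-4$''---does not work as stated, because the degree of $P_0$ in $m$ (hence of its discriminant in $z$) grows with $q$, so the discriminant is not a small perturbation of a quadratic. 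What you really need is that the branch points of $m_\infty$ lying in a fixed neighborhood of $[-2,2]$ are exactly two simple real ones; since the branch points are the critical values of $R$, this is precisely the statement that $R'$ has exactly two zeros on a fixed disk, which is the Rouch\'e argument the paper carries out. Once $\pm\tau$ are located and $R''(-\tau)=2+\O(q^{-2})\ne 0$ is known, the square-root edge behavior and $|\del_2 P_0|=|m_\infty||R'(m_\infty)|\asymp|m_\infty+\tau|\asymp\sqrt{\kappa+\eta}$ follow directly from the quadratic expansion, with no comparison to $m_{sc}$ needed.

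(You are right that $\del_2^2 P_0(z,m_\infty(z))=2+\O(q^{-2})$; the paper itself uses this value in the proof of Theorem~\ref{thm:edgerigidity}.)
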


\begin{proposition}\label{p:tminfty}
There exists an algebraic function $\tilde m_{\infty}: \bC_+\rightarrow \bC_+$, which depends on the random quantity $\cX$ as defined in \eqref{e:defcX}, such that the following holds:
\begin{enumerate}
\item $\tilde m_{\infty}$ is the solution of the polynomial equation, $P(z, \tilde m_{\infty}(z))=0$.
\item $\tilde m_{\infty}$ is the Stieltjes transform of a random symmetric probability measure $\tilde \rho_{\infty}$. Moreover, $\supp \tilde \rho_{\infty}=[-\tilde L, \tilde L]$ and $\rho_{\infty}$ is strictly positive on $(-\tilde L,\tilde L)$ and has square root behavior at the edge.
\item $\tilde L$ has Gaussian fluctuation, explicitly
\begin{align}\label{e:esttL}
\tilde L=L+\cX+\O_{\prec}\left(\frac{1}{\sqrt{N}q^{3}}\right).
\end{align}
We have the following estimate on the imaginary part of $\tilde m_{\infty}$,
\begin{align} \label{eqn:imtilminf}
\Im[\tilde m_{\infty}(E+\ri \eta)]\asymp \left\{
\begin{array}{cc}
\sqrt{\kappa+\eta}, & \text{if $E\in [-\tilde L, \tilde L]$,}\\
\eta/\sqrt{\kappa+\eta}, & \text{if $E\not\in [-\tilde L, \tilde L]$,}
\end{array}
\right.
\end{align}
and 
\begin{align}\label{e:derPest}
|\del_2 P(z,\tilde m_{\infty}(z))|\asymp \sqrt{\kappa+\eta},\quad
\del_2^2 P(z,\tilde m_{\infty}(z))=1+\OO(1/q^2),
\end{align}
where $\kappa =\dist(\Re[z], \{-\tilde L,\tilde L\})$.
\end{enumerate}
\end{proposition}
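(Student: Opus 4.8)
Proposal for the proof of Proposition \ref{p:tminfty}.

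The strategy is to treat $P(z,m) = P_0(z,m) + \cX m^2$ as a small random perturbation of the deterministic polynomial $P_0(z,m)$ from Proposition \ref{p:minfty}, and to transfer all of the stated properties of $m_\infty$ and $\rho_\infty$ to $\tilde m_\infty$ and $\tilde \rho_\infty$ by a perturbative argument. The key point is that $\cX = \O_\prec(\sqrt N q)$, so the coefficient $\cX$ of $m^2$ in $P$ is comparable in size to the coefficients $a_k/q^2 = \O(1/q^2)$ of the higher-order terms $m^{2k}$ in $Q$ only when $\sqrt N q \sim 1/q^2$, i.e.\ $q \sim N^{-1/6}$; in our regime $q \gg N^{1/9}$ the quantity $\cX$ is actually \emph{large} compared to $1/q^2$ but still $o(1)$ since $\sqrt N q \ll 1$ requires checking — in fact one should keep $\cX$ as an explicit $\O_\prec$-quantity and not absorb it into error terms, which is why it appears explicitly in \eqref{e:esttL}. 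First I would record that $\cX$, being real, keeps $P(z,\cdot)$ a polynomial with real coefficients that is an even function plus the even term $\cX m^2$, hence $P(z,m)$ inherits the $m \mapsto -m$, $z \mapsto -z$ symmetry structure of $P_0$; this gives symmetry of $\tilde \rho_\infty$ for free once existence is established.

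For parts (1)–(2), I would argue exactly as one expects $m_\infty$ to have been constructed from $P_0$: since $\partial_m^2 P_0(z,m_\infty(z)) = 1 + \O(1/q^2)$ is bounded away from zero and $\cX m^2$ is a uniformly (in the relevant compact region of $m$) small perturbation of $P_0$, the implicit function theorem / Rouché gives, for each $z \in \bC_+$, a unique solution branch $\tilde m_\infty(z)$ of $P(z,\tilde m_\infty(z)) = 0$ close to $m_\infty(z)$, depending holomorphically on $z$ and analytically on the parameter $\cX$. One then checks $\tilde m_\infty : \bC_+ \to \bC_+$ (using $\Im \tilde m_\infty \to \Im m_\infty > 0$ in the bulk and a continuity/degree argument, together with $z \tilde m_\infty(z) \to -1$ as $z \to \infty$ which one reads off from $P(z,m) = 1 + zm + m^2 + \cdots$) and invokes the Nevanlinna representation to conclude $\tilde m_\infty$ is the Stieltjes transform of a probability measure $\tilde \rho_\infty$. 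The square-root edge behavior and strict positivity on the open interval follow because the edge of $\tilde \rho_\infty$ is where the two relevant solution branches of $P(z,\cdot) = 0$ collide: at such a point $\partial_m P(z, \tilde m_\infty) = 0$ while $\partial_m^2 P(z,\tilde m_\infty) = 1 + \O(1/q^2) \neq 0$, so the branch point is generic (square-root type), and this structural fact is stable under the $\cX m^2$ perturbation.

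For part (3), the edge $\tilde L$ is characterized by the system $P(\tilde L, m) = 0$, $\partial_m P(\tilde L, m) = 0$ (the rightmost real branch point), and analogously $L$ is characterized by $P_0(L, m_0) = 0$, $\partial_m P_0(L, m_0) = 0$. Writing $\tilde L = L + \delta$ and $\tilde m_\infty(\tilde L) = m_0 + \varepsilon$ and expanding both equations to first order — using that the Hessian of $(z,m) \mapsto P_0$ at $(L,m_0)$ is nondegenerate in the direction transverse to the branch, with $\partial_z P_0 = m_0 + \O(1/q^2) \asymp 1$ and $\partial_m^2 P_0 \asymp 1$, while the degeneracy $\partial_m P_0(L,m_0) = 0$ forces the $\sqrt{\cdot}$ scaling — one finds $\delta = -\cX \, m_0^2 / \partial_z P_0(L,m_0) + \O(\cX^2 / (\text{stuff}))$. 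Since $m_0 = m_\infty(L)$; one computes $m_0 = -1 + \O(1/q^2)$ (the semicircle edge value) and $\partial_z P_0(L,m_0) = m_0 + \O(1/q^2) = -1 + \O(1/q^2)$, giving $\delta = \cX(1 + \O(1/q^2)) + \O(\cX^2)$. With $\cX = \O_\prec(\sqrt N q)$ and $\cX^2 = \O_\prec(N q^2)$... here I need the error to be $\O_\prec(1/(\sqrt N q^3))$, so I must be more careful: the correct expansion keeps the linear term $\cX$ exactly and bounds the remainder by $\O(\cX/q^2) + \O(\cX^2)$, and since in the regime of interest $\cX/q^2 \ll 1/(\sqrt N q^3)$ is false in general — so actually the claim \eqref{e:esttL} must be read as $\tilde L - L - \cX = \O_\prec(\cX/q^2 + \dots)$ with the stated bound valid; I would track the precise dependence by carrying the second-order Taylor expansion of the branch-point equations and using the genuine sizes of the second derivatives. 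Finally, \eqref{eqn:imtilminf} and \eqref{e:derPest} for $\tilde m_\infty$ follow from the corresponding estimates for $m_\infty$ in Proposition \ref{p:minfty} by the perturbative closeness $\tilde m_\infty(z) = m_\infty(z) + \O(\cX)$ (uniformly on $\cD$, with the square-root structure preserved and the edge shifted from $L$ to $\tilde L$), since $\partial_m^2 P = \partial_m^2 P_0 + 2\cX = 1 + \O(1/q^2)$ and $\partial_m P(z,\tilde m_\infty) \asymp \sqrt{\kappa + \eta}$ is exactly the collision-rate estimate at a square-root edge.

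The main obstacle I anticipate is the bookkeeping in part (3): extracting the shift $\tilde L = L + \cX + (\text{error})$ with the sharp error $\O_\prec(1/(\sqrt N q^3))$ requires knowing the first and second derivatives of $P_0$ at the edge $(L, m_0)$ with enough precision (in particular that $\partial_z P_0(L,m_0) = -1 + \O(1/q^2)$ and $m_0^2 = 1 + \O(1/q^2)$, so that the coefficient of $\cX$ is exactly $1$ up to $\O(1/q^2)$), and then controlling the quadratic-in-$\cX$ remainder — this is where the regime constraint $q \gg N^{1/9}$ enters, ensuring $\cX^2$-type errors are negligible on the relevant scale. Everything else (existence, Stieltjes-transform property, square-root edge, symmetry, the $\Im$-estimates) is a routine stability argument inherited from Proposition \ref{p:minfty}.
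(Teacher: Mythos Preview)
Your approach is essentially the paper's: for parts (1)--(2) and the square-root/imaginary-part estimates in (3) the paper simply reruns the proof of Proposition~\ref{p:minfty} verbatim (since $P$ differs from $P_0$ only by the small even term $\cX m^2$), and for the edge shift \eqref{e:esttL} it Taylor-expands the critical-point equation, exactly as you propose.

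One correction that dissolves your confusion in part (3): you repeatedly write $\cX = \O_\prec(\sqrt{N}q)$, but the bound is $\cX = \O_\prec\bigl(1/(\sqrt{N}q)\bigr)$ (it is a normalized sum of $\asymp N^2$ centered terms of variance $\O(1/(N^2q^2))$). With this fixed, your remainder $\O(\cX/q^2) + \O(\cX^2) = \O_\prec\bigl(1/(\sqrt{N}q^3)\bigr) + \O_\prec\bigl(1/(Nq^2)\bigr)$, and since $q \lesssim \sqrt{N}$ the first term dominates, giving exactly \eqref{e:esttL}; no constraint like $q \gg N^{1/9}$ enters here. The paper's bookkeeping is slightly cleaner because it works with the inverse function $z = R(w)$ from the proof of Proposition~\ref{p:minfty}: then $\tilde R(w) = R(w) - \cX w$, the edge is the single critical-point equation $\tilde R'(-\tilde\tau) = 0$, and one reads off $\tilde\tau = \tau + \cX/R''(\tau) + \O_\prec(\cX^2)$ and $\tilde L = \tilde R(-\tilde\tau) = L + \cX\tau + \O_\prec(\cX^2)$ (using $R'(-\tau)=0$), after which $\tau = 1 + \O(1/q^2)$ finishes. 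Your two-equation system $P = \partial_m P = 0$ is equivalent but carries an extra unknown.
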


The proofs of Propositions \ref{p:minfty} and \ref{p:tminfty} follow the same proof as \cite[Lemma 4.1]{Lee2016}. For completeness, we sketch the proofs in  Appendix \ref{ap:pftminfty}. 

Before proving Proposition \ref{p:DSE}, we collect some estimates which will be used repeatedly in the rest of the paper.

\begin{proposition}
Let $H$ be as in Definition \ref{asup} and fix a large constant $\fb>0$. Let $G(z)$ be the Green's function of $H$ and $m_N(z)$ the Stieltjes transform of the eigenvalue density of $H$. For any indices $1\leq i,j\leq N$, we have
\begin{align}\label{e:hbound}
h_{ij}\prec \frac{1}{q}.
\end{align}
Uniformly for any $z=E+\ri\eta$ such that $-\fb\leq E\leq \fb$, $1/N\ll \eta\leq \fb$,
 we have
 \begin{align}\label{e:derivativeofG}
\sum_{k}\left|G_{ik}(z)G_{kj}(z)\right|\prec \frac{\Im[m_N(z)]}{\eta}
 \end{align}
\begin{align}\label{e:derivativeofm}
\left|\del_{ij}m_N(z)\right|\prec \frac{\Im[m_N(z)]}{N\eta},\quad 
\frac{1}{N}\left|\del_z m_N(z)\right|\prec \frac{\Im[m_N(z)]}{N\eta},
\end{align}
where $\del_{ij}$ is the derivative with respect to $h_{ij}$.
\end{proposition}
\begin{proof}
The bound on $h_{ij}$ follows from Markov's equality. For \eqref{e:derivativeofG}, by the Cauchy-Schwartz inequality and the Ward identity \eqref{e:Ward}, we have
\begin{align}
\left| \sum_{k}G_{ik}G_{kj}\right|\leq \frac{1}{2}\sum_{k}\left[|G_{kj}|^2+|G_{ik}|^2\right]=\frac{\Im[G_{jj}]+\Im[G_{ii}]}{2\eta}\prec \frac{\Im[m_N]}{\eta},
\end{align}
where in the last inequality we used \eqref{e:offbound2}. The estimates in \eqref{e:derivativeofm} follow from \eqref{e:derivativeofG} and the calculations,
\begin{align}
\left|\del_{ij}m_N\right|=\frac{2-\delta_{ij}}{N}\left|\sum_{k}G_{ik}G_{kj}\right|\prec \frac{\Im[m_N]}{N\eta},
\end{align}
and
\begin{align}
\frac{1}{N}\left|\del_{z}m_N\right|=\frac{1}{N^2}\left|\sum_{ij}G_{ij}G_{ji}\right|\prec \frac{\Im[m_N]}{N\eta}.
\end{align}
\end{proof}

The following proposition allows us to estimate the expectation of certain polynomials of entries of the Green's function.
\begin{proposition}\label{p:one-off}
Let $H$ be as in Definition \ref{asup} and fix a large constant $\fb>0$. Let $G(z)$ be the Green's function of the matrix $H$. Let $F(G(z))$ be a function of the following form,
\begin{align}
F(G(z))=\prod_{k=1}^d \left(\frac{1}{N}\sum_{i=1}^NG_{ii}^{s_k}(z)\right).
\end{align}
Uniformly for any $z=E+\ri\eta$ such that $-\fb\leq E\leq \fb$, $1/N\ll \eta\leq \fb$,
we have 
\begin{align}
\left|\frac{1}{N^2}\bE\left[\sum_{ij}G_{ij}G_{ii}^{r_1}G_{jj}^{r_2}F(G)\right]\right|\prec \frac{\bE[\Im[m_N]]}{N\eta}.
\end{align}
\end{proposition}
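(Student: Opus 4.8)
\textbf{Proof proposal for Proposition \ref{p:one-off}.}

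The plan is to estimate $\frac{1}{N^2}\bE[\sum_{ij} G_{ij} G_{ii}^{r_1} G_{jj}^{r_2} F(G)]$ by a cumulant expansion in the entries $h_{ij}$, exactly in the spirit of the fluctuation averaging arguments of \cite{Lee2016, BHKY}. First I would use the identity $zG_{ij} = -\delta_{ij} + \sum_k h_{ik} G_{kj}$, or rather apply it to the diagonal factor: write $\sum_{ij} G_{ij} G_{ii}^{r_1} G_{jj}^{r_2} F(G)$ and insert the resolvent expansion into the $G_{ij}$ factor (for $i \neq j$) via $G_{ij} = \sum_k h_{ik} (G G)_{\cdots}$ is not quite right; instead the cleanest route is to multiply by $z$ and use $z G_{ij} = -\delta_{ij} + (HG)_{ij} = -\delta_{ij} + \sum_k h_{ik} G_{kj}$, so that $\frac{z}{N^2}\bE[\sum_{ij} G_{ij}G_{ii}^{r_1}G_{jj}^{r_2}F] = -\frac{1}{N^2}\bE[\sum_i G_{ii}^{r_1+r_2+1} F] + \frac{1}{N^2}\bE[\sum_{ijk} h_{ik} G_{kj} G_{ii}^{r_1} G_{jj}^{r_2} F]$. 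The first term on the right is $\O_\prec(1/N)$ trivially (it is $\frac{1}{N}$ times a bounded quantity, in fact $\O_\prec(\bE[\Im m_N]/(N\eta))$ after using $|G_{ii}| \prec 1$ and that $\Im m_N \gtrsim \eta$), which is the target order.

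Next, for the term $\frac{1}{N^2}\bE[\sum_{ijk} h_{ik} G_{kj} G_{ii}^{r_1} G_{jj}^{r_2} F]$ I would apply the cumulant expansion formula in the variable $h_{ik}$: for each fixed $i,k$, $\bE[h_{ik} \Phi(H)] = \sum_{\ell \geq 1} \frac{\cC_{\ell+1}(h_{ik})}{\ell!} \bE[\partial_{ik}^\ell \Phi(H)]$, where by Definition \ref{asup} the cumulants satisfy $\cC_{\ell+1}(h_{ik}) = \frac{\ell!\, \cC_{\ell+1}}{N q^{\ell-1}}$ (with $\cC_2 = 1/N$). So the $\ell=1$ term carries a factor $1/N$ and the $\ell \geq 2$ terms carry factors $1/(Nq^{\ell-1})$, hence are smaller. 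The key point is power counting: each derivative $\partial_{ik}$ acting on a resolvent entry produces a factor of the form $-G_{\cdot i}G_{k \cdot}$ or $-G_{\cdot k}G_{i\cdot}$, i.e. it produces additional off-diagonal Green's function entries, and by the local law \eqref{indLocLaw} (via the bound $|G_{ij} - \delta_{ij}m_{sc}| \prec q^{-1} + \sqrt{\Im m_N/(N\eta)} + (N\eta)^{-1} \eqqcolon \Psi$), off-diagonal entries are small. The sums over the free indices $i,j,k$ are controlled using the Ward identity $\sum_k |G_{ik}|^2 = \Im G_{ii}/\eta$, which converts pairs of off-diagonal entries summed over a common index into a factor $\Im m_N/\eta$ while eating one power of $N$; this is precisely estimate \eqref{e:derivativeofG}, which I would invoke repeatedly. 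The bookkeeping is to check that in every resulting monomial, the number of $N$-powers lost from differentiation/cumulant-size is always compensated so that the net size is at most $\bE[\Im m_N]/(N\eta)$.

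The main structural point, and where I'd be most careful, is the $\ell=1$ term after differentiation: $\frac{1}{N^2}\sum_{ijk}\frac{1}{N}\bE[\partial_{ik}(G_{kj}G_{ii}^{r_1}G_{jj}^{r_2}F)]$. When $\partial_{ik}$ hits $G_{kj}$ we get a term $\sim G_{ki}G_{ij}$ or $G_{kk}G_{ij}$; the piece $\frac{1}{N^3}\sum_{ijk}\bE[G_{kk}G_{ij}G_{ii}^{r_1}G_{jj}^{r_2}F]$ reproduces (up to the extra $1/N$ and a sum over $k$ giving $m_N$) the original quantity we want to bound, i.e. we get a self-consistent/self-improving relation $\frac{z}{N^2}\bE[\sum_{ij}G_{ij}\cdots] = (\text{small}) + (\text{const} + o(1))\cdot\frac{1}{N^2}\bE[\sum_{ij}G_{ij}\cdots] + (\text{small})$, and one solves for the quantity. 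The coefficient must be shown to be bounded away from making the equation degenerate — here one uses $|z|$ bounded below is \emph{not} available near the edge, so instead one should track that this self-referential term actually comes with a genuinely small prefactor (an extra $1/N$ times a single free index sum, net $\O(1)$ but multiplied against the already-small left side) rather than destroying the bound; more precisely the term is $O_\prec(\Im m_N/(N\eta))$ times the trivial size of the original sum, so it is absorbed. The remaining terms — $\partial_{ik}$ hitting $G_{ii}^{r_1}$, $G_{jj}^{r_2}$, or $F$ — all produce at least two off-diagonal factors sharing indices, handled by Ward as above, and the $\ell\geq 2$ cumulant terms are subleading by powers of $q^{-1} \ll 1$ and extra off-diagonal smallness. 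I expect the main obstacle to be precisely this organization of the $\ell = 1$ terms: distinguishing the harmless self-referential contribution from the genuinely small ones, and verifying uniformly down to $\eta \gg 1/N$ that the Ward-identity reductions never lose more than one power of $N$ per paired index. I would structure the write-up as a finite enumeration of the $O(1)$ many monomial types produced (the degree in $G$ is fixed by $r_1, r_2, d, s_k$, all $\O(1)$), bounding each by $\bE[\Im m_N]/(N\eta)$ via \eqref{e:derivativeofG}, \eqref{e:hbound}, the local law, and $\Im m_N \gtrsim \eta$.
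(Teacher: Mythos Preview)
Your overall plan (cumulant expansion plus Ward identity) is correct in spirit, but there is a genuine gap in your handling of the self-referential term, and your route differs from the paper's in a way that matters.

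\textbf{The gap.} You correctly observe that the $\ell=1$ term in your cumulant expansion, coming from $\partial_{ik}G_{kj}\supset -G_{kk}G_{ij}$, reproduces the original quantity with prefactor $\frac{1}{N}\sum_k G_{kk}=m_N$. But your claim that this ``comes with a genuinely small prefactor'' and is ``$\O_\prec(\Im m_N/(N\eta))$ times the trivial size of the original sum'' is false: the prefactor is $m_N\sim -1$, which is order one, not small. Your argument at this point is circular (``multiplied against the already-small left side'' assumes what you are proving). The correct fix along your route would be to move this term to the left and use that $z+m_N=z+m_{sc}+\O_\prec(\Psi)$ with $|z+m_{sc}|=1/|m_{sc}|\asymp 1$, so one can divide; but then you must control $\bE[(m_N-m_{sc})\cdot X]$, which requires an a priori pointwise bound on the random variable $X=\frac{1}{N^2}\sum_{ij}G_{ij}G_{ii}^{r_1}G_{jj}^{r_2}F$ (not just its expectation). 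This is doable via Ward but is an extra step you have not accounted for.

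\textbf{How the paper avoids this.} The paper uses a different starting identity: for $i\neq j$, $G_{ij}=G_{ii}\sum_{k\neq i}h_{ik}G_{kj}^{(i)}$, with the minor resolvent $G^{(i)}$ independent of $h_{ik}$. After cumulant expanding in $h_{ik}$, the factor $G_{kj}^{(i)}$ is \emph{not} differentiated; only $G_{ii}^{r_1+1}G_{jj}^{r_2}F(G)$ is. Since $\partial_{ik}G_{ii}=-2G_{ii}G_{ik}$ always produces an off-diagonal entry, the $p=1$ term never yields a purely diagonal monomial and hence no order-one self-referential term appears. Purely diagonal (recursive) terms arise only for even $p\geq 2$, each carrying a factor $q^{-(p-1)}\leq q^{-1}$, so the recursion gains a power of $q^{-1}$ at every step and terminates after finitely many iterations. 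This is why the paper's choice of identity is cleaner than the $zG=-I+HG$ route.

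In short: your approach can be salvaged, but not as written; the paper's minor-resolvent trick sidesteps the issue entirely by pushing the recursive term to order $q^{-1}$.
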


\begin{proof}
For the sum of diagonal entries, we have the following trivial bound
\begin{align}
\left|\frac{1}{N^2}\sum_{i=j}G_{ij}G_{ii}^{r_1}G_{jj}^{r_2}F(G)\right|\prec \frac{1}{N}.\end{align}
For the off-diagonal terms, we use the following identities
\begin{align}
G_{ij}=G_{ii}\sum_{k\neq i}h_{ik}G_{kj}^{(i)},\quad G_{kj}^{(i)}=G_{kj}-\frac{G_{ki}G_{ij}}{G_{ii}},
\end{align}
and the cumulant expansion. We take a large constant $l$ such that $q^\ell \geq N^{10}$, 
\begin{align}\begin{split}
\frac{1}{N^2}\bE\left[\sum_{ij}G_{ij}G_{ii}^{r_1}G_{jj}^{r_2}F(G)\right]
&=\frac{1}{N^2}\sum_{ijk}\bE\left[h_{ik}G_{kj}^{(i)}G_{ii}^{r_1+1}G_{jj}^{r_2}F(G)\right]\\
&=\sum_{p=1}^{\ell}\frac{\cC_{p+1}}{N^3q^{p-1}}\sum_{ijk}\bE\left[G_{kj}^{(i)}\del_{ik}^p\left(G_{ii}^{r_1+1}G_{jj}^{r_2}F(G)\right)\right]+\O\left(\frac{1}{N^3}\right)\\
&=\sum_{p=1}^{\ell}\frac{\cC_{p+1}}{N^3q^{p-1}}\sum_{ijk}\bE\left[G_{kj}\del_{ik}^p\left(G_{ii}^{r_1+1}G_{jj}^{r_2}F(G)\right)\right]
+\O_\prec\left(\frac{\bE[\Im[m_N]]}{N\eta}\right).
\end{split}\end{align}
We consider terms in the Leibniz expansion of $\del_{ik}^p\left(G_{ii}^{r_1+1}G_{jj}^{r_2}F(G)\right)$.  If such a term contains at  least one off-diagonal term, then the resulting expression $G_{kj}\del_{ik}^p(G_{ii}^{r_1+1}G_{jj}^{r_2}F(G))$ contains at least two off-diagonal terms. We can estimate it using \eqref{e:derivativeofG}, which gives an error $\O_{\prec}(\Im[m_N]/N\eta)$. For example if $A_{ijk}$ is a term in the expansion of  $\del_{ik}^p\left(G_{ii}^{r_1+1}G_{jj}^{r_2}F(G)\right)$ that contains the  off-diagonal term $G_{ik}$, then
\begin{align}\begin{split}\label{e:cauchybound}
\left|\frac{\cC_{p+1}}{N^3q^{p-1}}\sum_{ijk}\bE\left[G_{kj} A_{ijk} 
\right]\right|&\prec \frac{1}{N^3}\sum_{ijk}\bE\left[|G_{kj}G_{ik}|\right]\\
&\leq  \frac{1}{N^3}\sum_{ijk}\bE\left[|G_{kj}|^2+|G_{ik}|^2\right]\prec \frac{\bE[\Im[m_N]]}{N\eta}.
\end{split}\end{align}
Thus,
\begin{align}\begin{split}\label{e:oneoffmain}
&\sum_{p\geq 1}\frac{\cC_{p+1}}{N^3q^{p-1}}\sum_{ijk}\bE\left[G_{kj}\del_{ik}^p\left(G_{ii}^{r_1+1}G_{jj}^{r_2}F(G)\right)\right]\\
=&\sum_{p\geq 1}\frac{(r_1+1)\cC_{p+1}}{N^3q^{p-1}}\sum_{ijk}\bE\left[G_{kj}\left(\del_{ik}^pG_{ii}^{r_1+1}\right)G_{jj}^{r_2}F(G)\right]+\O_{\prec}\left(\frac{\bE[\Im[m_N]]}{N\eta}\right).
\end{split}\end{align}
Note that when derivatives hit the $F(G)$ term they generically contain off-diagonal terms or are of lower cardinality. 
If $p$ is an odd number then $(\del_{ik}^{p}G_{ii}^{r_1+1})$ must contain off-diagonal terms and so we dispense with this case. If $p$ is an even number, then 
\begin{align}\label{e:dikG}
\del_{ik}^{p}G_{ii}^{r_1+1}=p! {p/2+r_1\choose p/2}G_{ii}^{p/2+r_1+1}G_{kk}^{p/2}+\text{terms with off-diagonal terms}.
\end{align}
For terms in $(\del_{ik}^{p}G_{ii}^{r_1+1})$ which contains off-diagonal terms, we can estimate them in the same way as \eqref{e:cauchybound}, which gives an error $\O_{\prec}(\Im[m_N]/N\eta)$. Therefore, by plugging \eqref{e:dikG} into \eqref{e:oneoffmain}, we get 
\begin{align}
\frac{1}{N^2}\bE\left[\sum_{ij}G_{ij}G_{ii}^{r_1}G_{jj}^{r_2}F(G)\right]
&= \sum_{p\geq 1,2|p}\frac{p!\cC_{p+1}}{N^2q^{p-1}}{p/2+r_1\choose p/2}\sum_{jk}\bE\left[G_{kj}G_{kk}^{p/2}G_{jj}^{r_2}\left(\frac{1}{N}\sum_{i}G_{ii}^{r_1+p/2+1}\right)F(G)\right]\nonumber\\
\label{e:expand1}&+\O_{\prec}\left(\frac{\bE[\Im[m_N]]}{N\eta}\right).
\end{align}
The terms on the righthand side of \eqref{e:expand1} are in the same form as the one on the lefthand side of \eqref{e:expand1}, except that there are some factors of $1/q$ in front of them. For each term on the righthand side of \eqref{e:expand1}, we can repeat the procedure above, each time gaining at least a factor of $q^{-1}$ for any terms that contain only diagonal Green's function elements.   After finitely many steps all the errors are bounded by $\O_\prec(N^{-1})$, and the claim follows.
\end{proof}

\begin{proof}[Proof of Proposition \ref{p:DSE}]
The polynomial $P_0(z,m)$ is constructed in a way such that \eqref{e:defP0} is satisfied. More precisely,
we compute the following expectation using the cumulant expansion
\begin{align}
\bE[1+zm_N]
=\frac{1}{N}\sum_{ij}\bE[h_{ij}G_{ij}]
=\frac{1}{N}\sum_{ij}\sum_{p=1}^{\ell}\frac{\cC_{p+1}}{Nq^{p-1}}\bE[\del_{ij}^{p}G_{ij}]+\O\left(\frac{1}{q^{\ell}}\right),
\end{align}
where $\ell$ is large enough such that $q^\ell\geq N^{10}$.
For the derivatives of the resolvent entries $G_{ij}$, three kinds of terms might occur: the terms containing more than one (counting multiplicity) off-diagonal entries, the terms containing one (counting multiplicity) off-diagonal entry, or the terms containing no off-diagonal entry.

For those terms containing more than one off-diagonal entry, using \eqref{e:derivativeofG}, we have the following trivial bound,
\begin{align}\label{e:boundtwooff}
\left|\frac{1}{N^2}\sum_{ij}\bE\left[G_{ij}^{r_1}G_{ii}^{r_2}G_{jj}^{r_3}\right]\right|\prec\frac{1}{N^2}\sum_{ij}\bE\left[|G_{ij}|^{2}\right]\prec \frac{\bE[\Im[m_N]]}{N\eta}.
\end{align}
For those terms containing one off-diagonal entry, thanks to Proposition \ref{p:one-off}, we have the same estimate
\begin{align}
\left|\frac{1}{N^2}\sum_{ij} \bE\left[G_{ij}G_{ii}^{r_1}G_{jj}^{r_2}\right]\right|\prec \frac{\bE[\Im[m_N]]}{N\eta}.
\end{align}

Finally we analyze those terms containing no off-diagonal entry, say $G_{ii}^{r_1}G_{jj}^{r_2}$. If $r_1,r_2\leq 1$, then its average can be written in terms of $m_N$, i.e.
\begin{align}
\frac{1}{N^2}\sum_{ij}G_{ii}^{r_1}G_{jj}^{r_2}=m_N^{r_1+r_2}.
\end{align}
Otherwise, we assume $r_1\geq 2$. Thanks to the following identities,
\begin{align}\begin{split}
G_{ii}^{r_1}G_{jj}^{r_2}+zG_{kk}G_{ii}^{r_1}G_{jj}^{r_2}= &\left(\sum_{m}h_{km}G_{km}\right)G_{ii}^{r_1}G_{jj}^{r_2},\\
G_{kk}G_{ii}^{r_1-1}G_{jj}^{r_2}+zG_{kk}G_{ii}^{r_1}G_{jj}^{r_2}=& \left(\sum_{m}h_{im}G_{im}\right)G_{kk}G_{ii}^{r_1-1}G_{jj}^{r_2},
\end{split}\end{align}
by taking difference, we get
\begin{align*}
&\bE[G_{ii}^{r_1}G_{jj}^{r_2}]
=\bE[G_{kk}G_{ii}^{r_1-1}G_{jj}^{r_2}]
+\sum_{m}\bE\left[h_{km}G_{km}G_{ii}^{r_1}G_{jj}^{r_2}\right]
-\sum_{m}\bE\left[h_{im}G_{im}G_{kk}G_{ii}^{r_1-1}G_{jj}^{r_2}\right]\\
=&\bE[G_{kk}G_{ii}^{r_1-1}G_{jj}^{r_2}]
+\sum_{m}\sum_{p=1}^{\ell}\frac{\cC_{p+1}}{Nq^{p-1}}\left(\bE\left[\del^p_{km}\left(G_{km}G_{ii}^{r_1}G_{jj}^{r_2}\right)\right]
-\bE\left[\del^p_{im}\left(G_{im}G_{kk}G_{ii}^{r_1-1}G_{jj}^{r_2}\right)\right]\right)+\O\left(\frac{1}{q^{\ell}}\right).
\end{align*}
We consider first the leading term in the cumulant expansion $p=2$.  The crucial point is that  the only terms resulting from the Leibniz expansion of the derivatives which do not contain at least two off-diagonal entries directly cancel:
\begin{align*}
\begin{split}
&=\frac{\cC_{2}}{N}\left(\bE\left[\del_{km}\left(G_{km}G_{ii}^{r_1}G_{jj}^{r_2}\right)\right]
-\bE\left[\del_{im}\left(G_{im}G_{kk}G_{ii}^{r_1-1}G_{jj}^{r_2}\right)\right]\right)\\
&=\frac{\cC_{2}}{N}\left(\bE\left[\del_{km}\left(G_{km}\right)G_{ii}^{r_1}G_{jj}^{r_2}\right]
-\bE\left[\del_{im}\left(G_{im}\right)G_{kk}G_{ii}^{r_1-1}G_{jj}^{r_2}\right]\right)+\text{terms with at least two off-diagonal terms}\\
&=\frac{\cC_{2}}{N}\left(
\bE\left[\left(G_{ii}G_{mm}\right)G_{kk}G_{ii}^{r_1-1}G_{jj}^{r_2}\right]-\bE\left[\left(G_{kk}G_{mm}\right)G_{ii}^{r_1}G_{jj}^{r_2}\right]\right)+\text{terms with at least two off-diagonal terms}\\
&=\text{terms with at least two off-diagonal terms}.
\end{split}\end{align*}
The terms with at least two off-diagonal terms can be bounded as in \eqref{e:boundtwooff}. Therefore, averaging over $i,j,k,m$, we obtained that 
\begin{align}
\frac{1}{N^2}\sum_{ij}\bE[G_{ii}^{r_1}G_{jj}^{r_2}]=\frac{1}{N^3}\sum_{ijk}\bE[G_{kk}G_{ii}^{r_1-1}G_{jj}^{r_2}]+\text{higher order terms}+\O_\prec \left(\frac{\bE[\Im[m_N(z)]]}{N\eta}\right),
\end{align}
where those higher order terms refer to terms of size $\O(q^{-1})$. In this way we introduced a new resolvent entry $G_{kk}$, and the exponent of $G_{ii}$ was reduced by one. By repeating this process, we can reduce all the exponents to one. The average of the resulting expression can be written in terms of $m_N$, i.e. $m_N^{r_1+r_2}$. We can repeat the above procedure to the higher order terms, until the error is of order $\O_\prec(N^{-1})$. In this way, we obtain a polynomial $Q(m_N)$ such that
\begin{align}
\bE[1+zm_N]=-\bE[Q(m_N)]+\O_{\prec}\left(\frac{\bE[\Im[m_N]]}{N\eta}\right).
\end{align} 
\end{proof}

\subsection{Moments of the higher order self-consistent equation}
\label{s:highm}

%

In this section we compute higher moments of the self-consistent equation near the spectral edges $\pm \tilde L$.  This gives us a recursive moment estimate for the Stieltjes transform $m_N(z)$.  The rigidity estimates follow from a careful analysis of the recursive moment estimate and an iteration argument.

Note that the polynomial we consider is random and depends on $\cX$. We will need to use the deterministic bounds of the random quantity in \eqref{eqn:imtilminf}, and so we have to consider the Stieltes transform $m_N$ at the random spectral argument $\tilde z=\pm(L+\cX)+z$,
where $z\in \cD$. We recall the shifted spectral domain from \eqref{e:defD}
\begin{align}\label{e:recalldefD}
\cD=\left\{z=\kappa+\ri\eta\in \bC_+: |\kappa| \leq 1,  0\leq \eta\leq 1, |\kappa|+\eta\geq N^\fa\left( \frac{1}{q^3 N^{1/2}}+\frac{1}{q^3 N\eta}+\frac{1}{(N\eta)^2}\right)\right\}.
\end{align}

\begin{proposition}\label{t:Pmoment}
Let $H$ be as in Definition \ref{asup}. Let $m_N(z)$ be the Stieltjes transform of its eigenvalue density. 
We recall $L$ as defined in Proposition \ref{p:minfty}, and $\tilde \rho_\infty$ with its Stieltjes transform $\tilde m_\infty$ as defined in Proposition \ref{p:tminfty}.
Fix $\tilde z = L+\cX+ z$, where $z=\kappa+\ri \eta\in \cD$ and $\cX$ is defined in \eqref{e:defcX}.
We  have the estimate 
\begin{align}\begin{split}\label{e:Pmoment}
\phantom{{}+{}}\bE[|P(\tilde z,\tilde m_{N}(\tilde z)|^{2r}]
\prec \max_{s_1+s_2\geq1}\bE\left[\left\{ \left(\frac{1}{q^3}+\frac{1}{N\eta}\right) \frac{\Im[m_N(\tilde z)]|\del_2 P(\tilde z, m_N(\tilde z))|}{N\eta} \right \} ^{s_1/2}\right.&\\
\left.\left(\frac{\Im[m_N(\tilde z)]}{N\eta}\right)^{s_2}|P(\tilde z, m_N(\tilde z))|^{2r-s_1-s_2}\right]&.
\end{split}\end{align}
The analogous statement holds for $\tilde z=-L-\cX+z$.
\end{proposition}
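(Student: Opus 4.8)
The plan is to expand $\bE[|P(\tilde z, m_N(\tilde z))|^{2r}] = \bE[P(\tilde z, m_N)\,\overline{P(\tilde z, m_N)}^{\,r-1}\,P(\tilde z, m_N)^{r-1}\,\overline{P(\tilde z, m_N)}]$ by writing out one factor of $P(\tilde z, m_N) = P_0(\tilde z, m_N) + \cX m_N^2 = 1 + \tilde z m_N + Q(m_N) + \cX m_N^2$ and processing the term $\tilde z m_N = \frac{1}{N}\sum_{ij}\tilde z\, G_{ij}$ via the identity $\tilde z G_{ij} = -\delta_{ij} + \sum_k h_{ik} G_{kj}$, so that $\frac{1}{N}\sum_{ij}\tilde z G_{ij} = -1 + \frac{1}{N}\sum_{ijk} h_{ik}G_{kj}$. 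Then I would apply the cumulant expansion to $\bE[h_{ik}\,(\cdots)]$, truncating at order $\ell$ with $q^\ell \geq N^{10}$ as in the proof of Proposition \ref{p:DSE}. The leading $p=1$ and $p=2$ terms, together with the diagonal contributions, are designed to reconstruct $-Q(m_N) - \cX m_N^2$ up to admissible errors — this is exactly the content built into the construction of $P_0$ in Proposition \ref{p:DSE} and the definition $P = P_0 + \cX m^2$ — so that what survives is: (i) a remainder equal to $-\bE[P(\tilde z, m_N)\cdot(\text{rest of }|P|^{2r})]$ plus error, which is the $s_1 = s_2 = 0$ contribution on the right side, and (ii) genuine error terms coming from derivatives that hit off-diagonal Green's function entries, or that hit the remaining $|P|^{2r-1}$ factor.

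The key mechanism for the error bounds is the dichotomy used in Proposition \ref{p:one-off}: any term in the Leibniz expansion either produces at least two off-diagonal entries $G_{ab}$, which one bounds via Cauchy–Schwarz and the Ward identity \eqref{e:derivativeofG} to gain a factor $\Im[m_N]/(N\eta)$, or it is all-diagonal and then a further $\tilde z$-elimination step (as at the end of the proof of Proposition \ref{p:DSE}) gains a factor $q^{-1}$. Each derivative $\del_{ik}$ landing on a factor of $P(\tilde z, m_N)$ in the product $|P|^{2r-1}$ produces, via \eqref{e:derivativeofm} applied to $\del_2 P\cdot \del_{ik}m_N$, a factor of $\del_2 P(\tilde z, m_N)\cdot \Im[m_N]/(N\eta)$ while reducing the power of $P$ by one — this is the origin of the $s_1$ exponent with its $\left(\frac{1}{q^3}+\frac{1}{N\eta}\right)^{s_1/2}$ prefactor and the $|\del_2 P|^{s_1/2}$ weight (paired with $(\Im[m_N]/(N\eta))^{s_1/2}$ once one tracks that such terms come with an extra $1/q^3$ or $1/(N\eta)$ from a residual off-diagonal structure, and that they appear in conjugate pairs so the exponent is $s_1/2$). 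Terms where derivatives only reduce the cardinality of $m_N$-monomials without touching $P$ give the $s_2$ exponent with weight $(\Im[m_N]/(N\eta))^{s_2}$. Throughout, I would use \eqref{e:hbound}, i.e. $h_{ij}\prec 1/q$, and the local law \eqref{indLocLaw} on the domain $\cD$ to control stray diagonal entries by $\O(1)$, and I would use that the extra $\cX$-shift in $\tilde z$ only enters through $\del_2 P$ and $\Im[m_N]$, whose deterministic comparison estimates are supplied by \eqref{e:derPest} and \eqref{eqn:imtilminf} in Proposition \ref{p:tminfty}.

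A delicate structural point I would need to handle carefully is the interaction of the cumulant expansion with the \emph{random} argument $\tilde z = L + \cX + z$: since $\cX = \frac{1}{N}\sum_{ab}(h_{ab}^2 - 1/N)$ itself depends on the entries $h_{ik}$, the derivative $\del_{ik}$ acts on $\tilde z$ as well, producing terms like $(\del_{ik}\cX)\, m_N$ and $(\del_{ik}\cX)\, m_N^2$; one checks $\del_{ik}\cX = \O_\prec(q/N)$ which is small and, crucially, that these extra contributions are precisely what the term $\cX m^2$ in $P$ was inserted to absorb at leading order, while the subleading pieces land in the error budget. The second subtlety is keeping the power-counting bookkeeping consistent: each of the $2r$ factors of $P$ must be accounted for, so I would organize the expansion so that resolving one factor $P(\tilde z, m_N)$ either returns $-P(\tilde z, m_N)$ (preserving the count, the $s_1=s_2=0$ term), peels off one factor with the $\Im[m_N]/(N\eta)$ gain (contributing to $s_2$, or with the additional $\del_2 P$ and small prefactor, to $s_1$), or is pure error $\O_\prec(N^{-1})$, absorbed after finitely many iterations of the $q^{-1}$-gaining reduction. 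The main obstacle — and the place where the bulk of the work lies — is verifying that every all-diagonal term produced at every order of the cumulant expansion and every stage of the iterative $\tilde z$-elimination either matches a monomial of $Q$ (hence is absorbed into $P$) or is strictly higher order in $q^{-1}$; this is the analogue of the cancellation displayed in the $p=2$ computation inside the proof of Proposition \ref{p:DSE}, now carried out in the presence of the weight $|P|^{2r-1}$ and at all orders, and it is essentially a careful combinatorial accounting rather than a new idea. The statement for $\tilde z = -L - \cX + z$ follows by the symmetry $H \to -H$, under which $m_N(z)\to -m_N(-z)$, $P$ is even in $m$, and $\cX$ is invariant.
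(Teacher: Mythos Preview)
Your overall strategy---expand one factor of $P$ via $1+\tilde z m_N=\frac{1}{N}\sum h_{ij}G_{ij}$, cumulant-expand, and track derivatives landing on $P^{r-1}\bar P^r$---is exactly what the paper does. But you have misidentified where the crucial $\cX m^2$ cancellation takes place, and without the correct mechanism the argument does not close.

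You say the derivatives $\del_{ik}$ acting on the random argument $\tilde z=L+\cX+z$ produce terms that are absorbed by $\cX m^2$. In the paper those $\tilde z$-derivatives are a minor nuisance: Proposition~\ref{p:derPandG} shows $\del_{ij}^p G_{ij}(\tilde z)$ and $\del_{ij}^p P$ differ from the naive expressions only by harmless $\Im[m_N]/(N\eta)$ errors (and incidentally $\del_{ik}\cX\prec 1/(Nq)$, not $q/N$). The real obstruction is elsewhere. At cumulant order $p=2$ the Leibniz term $\del_{ij}(G_{ij})\,\del_{ij}(P^{r-1}\bar P^r)$ contains the \emph{all-diagonal} piece $-G_{ii}G_{jj}\,\del_{ij}(P^{r-1}\bar P^r)$, which after summation carries only a prefactor $\cC_3/(Nq)$ and a single $|\del_2 P|\Im[m_N]/(N\eta)$ from $\del_{ij}P$. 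That is $\frac{1}{q}\cdot\frac{\Im[m_N]|\del_2 P|}{N\eta}$, corresponding to the $s_1$-type term but with $1/q$ instead of the claimed $1/q^3+1/(N\eta)$; a similar issue arises at $p=3$. Your ``residual off-diagonal structure'' does not supply the missing $1/q^2$.

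The paper's fix is to keep $\bE[\cX m_N^2 P^{r-1}\bar P^r]$ as a separate summand and perform a \emph{second} cumulant expansion on it, writing $\cX=\frac{1}{N}\sum_{ij}h_{ij}^2-1$ and expanding in $h_{ij}$. This generates matching terms $\frac{2\cC_3}{N^2q}\sum_{ij}\bE[m_N^2\,\del_{ij}(P^{r-1}\bar P^r)]$ and $\frac{3\cC_4}{N^2q^2}\sum_{ij}\bE[m_N^2\,\del_{ij}^2(P^{r-1}\bar P^r)]$, so the dangerous pieces combine into $\frac{1}{N^2q}\sum_{ij}\bE[(m_N^2-G_{ii}G_{jj})\del_{ij}(P^{r-1}\bar P^r)]$. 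The factor $m_N^2-G_{ii}G_{jj}$ now gains an extra $1/q+1/\sqrt{N\eta}$ from the local law, and a further cumulant iteration on the remaining $(m_N-G_{ii})$ structure (equations \eqref{e:cancel_1}--\eqref{e:cancelline3} in the paper) produces the full $(1/q^3+1/(N\eta))^{1/2}$ weight. This two-step cancellation is the heart of the proof and is not visible in your sketch.
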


Before proving Proposition \ref{t:Pmoment}, we first  derive some estimates of the derivatives of the polynomial $P$ and the Green's function $G$ with respect to the matrix entries of $H$.

\begin{proposition}\label{p:derPandG}
Using the same notations as in Proposition \ref{t:Pmoment}, for the derivatives of $P(\tilde z, m_N(\tilde z))$, we have
\begin{align}\label{e:hderP}
|\del_{ij}^pP(\tilde z, m_N(\tilde z))|\prec |\del_{2}P(\tilde z, m_N(\tilde z))|\frac{\Im[m_N(\tilde z)]}{N\eta}.
\end{align}
For the derivatives of $G_{ij}(\tilde z)$, we have
\begin{align}\label{e:hderG}
\del_{ij}^pG_{ij}(\tilde z)=(\del_{ij}^pG_{ij})(\tilde z)+\O_{\prec}\left(\frac{\Im[m_N(\tilde z)]}{N\eta}\right).
\end{align}

\end{proposition}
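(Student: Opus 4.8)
The plan is to differentiate through the two ways in which $P(\tilde z,m_N(\tilde z))$ and $G_{ij}(\tilde z)$ depend on the entry $h_{ij}$: \emph{explicitly}, through the Green's function, and \emph{implicitly}, through the random argument $\tilde z=L+\cX+z$, since $\cX=\frac1N\sum_{ab}(h_{ab}^2-\frac1N)$ is itself a function of $H$. The only inputs I need about the implicit dependence are the elementary bounds $\del_{ij}\cX=\O_\prec(1/(Nq))$ (from $h_{ij}\prec1/q$, i.e.\ \eqref{e:hbound}), $\del_{ij}^2\cX=\O(1/N)$ deterministically, and $\del_{ij}^k\cX=0$ for $k\ge3$. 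Combined with the already-established estimates \eqref{e:derivativeofG}, \eqref{e:derivativeofm} on the $h$- and $z$-derivatives of $m_N$ and $G$, everything reduces to the chain rule plus careful bookkeeping.

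For \eqref{e:hderG} I would expand $\del_{ij}^p\big[G_{ij}(\tilde z(H);H)\big]$ by the chain rule. The single term in which all $p$ derivatives hit the explicit $H$-dependence is exactly $(\del_{ij}^pG_{ij})(\tilde z)$; in every other term at least one derivative lands on $\tilde z$, producing a factor $\del_{ij}\cX$ or $\del_{ij}^2\cX$ together with a mixed derivative $\del_z^a\del_{ij}^{b}G_{ij}$ with $a\ge1$. Iterating the Ward-identity estimate \eqref{e:derivativeofG} bounds such a mixed derivative by $\O_\prec(\Im[m_N(\tilde z)]/\eta^a)$, and the accompanying small factors ($1/(Nq)$ per $\del_{ij}\cX$, $1/N$ per $\del_{ij}^2\cX$) are exactly designed to compensate: the borderline term $\del_{ij}^2\cX\cdot(\del_zG_{ij})(\tilde z)$ is $\O(1/N)\cdot\O_\prec(\Im[m_N(\tilde z)]/\eta)=\O_\prec(\Im[m_N(\tilde z)]/(N\eta))$, and since on $\cD$ one has $N\eta\gg1$ and $q\to\infty$, all remaining terms are of smaller order. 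This yields \eqref{e:hderG}.

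For \eqref{e:hderP} I would apply the chain rule to $P(z,m)=P_0(z,m)+\cX m^2$, now also differentiating the coefficient $\cX$. Using $\del_zP=m$ and $\del_mP=\del_2P$, the $p=1$ case reads
\begin{align*}
\del_{ij}P(\tilde z,m_N(\tilde z))={}&(\del_{ij}\cX)\,m_N(1+m_N)+\del_2 P\cdot(\del_{ij}m_N)(\tilde z)\\
&+\del_2 P\cdot(\del_z m_N)(\tilde z)\,(\del_{ij}\cX),
\end{align*}
all functions evaluated at $\tilde z$. The middle term is the main term, bounded by $|\del_2 P|\,\Im[m_N(\tilde z)]/(N\eta)$ straight from \eqref{e:derivativeofm}; the last term carries the extra factor $\del_{ij}\cX$ and, with $|\del_z m_N|\prec\Im[m_N]/\eta$, is smaller by $1/q$. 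For the first term I would use $\del_2 P=\tilde z+2m+2\cX m+\O(1/q^2)$, hence $1+m_N=\frac12\del_2 P(\tilde z,m_N)+\O_\prec\big(\max(q^{-2},q/\sqrt N,|\kappa|+\eta)\big)$ (recalling $L=2+\O(1/q^2)$ and $\cX=\O_\prec(q/\sqrt N)$), which splits it into a piece proportional to $\del_2 P$ and a genuinely small remainder; both are absorbed into the main term using $q\to\infty$, the local law of Theorem~\ref{locallaw}, the near-edge estimates of Proposition~\ref{p:tminfty}, and the constraint defining $\cD$. The case $p\ge2$ follows by iteration, using in addition $\del_m^2 P=\O(1)$, $\del_m^bP=\O(1/q^2)$ for $b\ge3$, $\del_z\del_mP=1$, and the fact that higher $h$-derivatives of $m_N$ (and of $G_{ij}$) satisfy the same bounds as the first.

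I expect the main difficulty to be organizational rather than a single sharp estimate: identifying which terms of the chain-rule expansion carry the crucial small factors $\del_{ij}\cX$ or $\del_{ij}^2\cX$, and then verifying that the powers of $1/\eta$ generated by the $z$-differentiations are dominated --- uniformly over the anisotropic domain $\cD$ --- by the powers of $N\eta$ and $q$ built into its definition (and, for \eqref{e:hderP}, by the near-edge sizes of $\del_2 P$ and $\Im[m_N]$).
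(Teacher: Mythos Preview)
Your approach is essentially the same as the paper's: both compute $\del_{ij}P(\tilde z,m_N(\tilde z))$ by the chain rule, separating the explicit dependence of $m_N$ on $H$ from the implicit dependence through $\tilde z=L+\cX+z$ (and the coefficient $\cX$), and then bound each piece using $h_{ij}\prec 1/q$ and the estimates \eqref{e:derivativeofm}; the argument for \eqref{e:hderG} is likewise identical. You are in fact slightly more careful than the paper in one place: where the paper simply groups $\frac{2-\delta_{ij}}{N}h_{ij}(m_N+m_N^2)$ with the $\del_2 P\,\del_z m_N$ term and asserts the final bound, you explain it by writing $1+m_N=\tfrac12\del_2 P(\tilde z,m_N)+\text{error}$ --- this is a legitimate (and arguably cleaner) way to absorb that term into the $|\del_2 P|$ factor. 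One small slip: $\cX\prec 1/(\sqrt N\,q)$, not $q/\sqrt N$.
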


\begin{proof}
In this proof, we write $P=P(\tilde z, m_N(\tilde z))$ and $\del_2 P= (\del_{2}P)(\tilde z,  m_N(\tilde z))$. We recall $Q$ as defined in \eqref{e:defQ}. For the derivative of $P$,
\begin{align}\begin{split}\label{e:derP}
\del_{ij}P
&=\del_{ij}(1+\tilde z m_N(\tilde z)+Q(m_N(\tilde z))+\cX m_N^2(\tilde z))
=
\del_2 P\del_{ij}(m_N(\tilde z))+(\del_{ij}\tilde z)m_N(\tilde z)+(\del_{ij}\cX)m_N^2(\tilde z)\\
&=\del_2 P(\del_{ij}m_N)(\tilde z)+\frac{2-\delta_{ij}}{N}h_{ij}(m_N(\tilde z)+m_N^2(\tilde z)+\del_2 P\del_zm_N(\tilde z))
\prec |\del_2 P|\frac{\Im[m_N(\tilde z)]}{N\eta},
\end{split}\end{align}
where we used \eqref{e:derivativeofm}.
Similarly, for the higher order derivatives of $P$, we have
\begin{align}\label{e:hderP}
|\del_{ij}^pP(\tilde z, m_N(\tilde z))|\prec |\del_2 P|\frac{\Im[m_N(\tilde z)]}{N\eta}.
\end{align}
For the derivative of $G_{ij}(\tilde z)$, we have
\begin{align}\begin{split}
\del_{ij}G_{ij}(\tilde z)
&=\del_{ij}(G_{ij})(\tilde z)+(\del_z G_{ij} )(\tilde z) \del_{ij}\tilde z\\
&=\del_{ij}(G_{ij})(\tilde z)+\frac{2-\delta_{ij}}{N}h_{ij}\sum_{k}G_{ik}(\tilde z)G_{kj}(\tilde z)
=\del_{ij}(G_{ij})(\tilde z)+\O_{\prec}\left(\frac{\Im[m_N(\tilde z)]}{N\eta}\right),
\end{split}\end{align}
where in the last equality we used \eqref{e:derivativeofG}. The higher order derivatives of $G_{ij}(\tilde z)$ can be estimated in the same way.
\end{proof}

\begin{proof}[Proof of Proposition \ref{t:Pmoment}]
%
%
In this proof we write, for simplicity of notation, $P=P(\tilde z, m_N(\tilde z))$, $G=G(\tilde z)$, $m_N=m_N(\tilde z)$, $P'=P'(\tilde z, m_N(\tilde z))=\del_{2}P(\tilde z, m_N(\tilde z))$, $\del_{ij}^p P=\del_{ij}^p(P(\tilde z, m_N(\tilde z)))$, $\del_{ij}^pG=\del_{ij}^p(G(\tilde z))$, $\del_{ij}^pm_N=\del_{ij}^p(m_N(\tilde z))$, $D_{ij}^pG=(\del_{ij}^pG)(\tilde z)$ and $D_{ij}^pm_N=(\del_{ij} ^p m_N)(\tilde z)$.

The starting point is the following identity,
\begin{align}
1+\tilde z m_N(\tilde z)=\sum_{ij}h_{ij}G_{ij}(\tilde z).
\end{align}
Using the cumulant expansion, we can write the moment of $P(\tilde z, m_N(\tilde z))$ as,
\begin{align}
\begin{split}\label{e:Pmoment}
&\phantom{{}={}}\bE[|P(\tilde z,m_N(\tilde z))|^{2r}]
=\bE[Q P^{r-1}\bar P^{r}]+\bE[\cX m_N^2P^{r-1}\bar P^{r}]+\frac{1}{N}\bE\left[\sum_{ij}h_{ij}G_{ij}(\tilde z)P^{r-1}\bar P^{r}\right]\\
&=\bE[Q P^{r-1}\bar P^{r}]+\bE[\cX m_N^2P^{r-1}\bar P^{r}]+\frac{1}{N}\sum_{ij}\sum_{p=1}^{\ell}\frac{\cC_{p+1}}{Nq^{p-1}}\bE[\del_{ij}^{p}(G_{ij}P^{r-1}\bar P^r)]+\O\left(\frac{1}{q^{\ell}}\right),
\end{split}
\end{align}
where $\ell$ is large enough such that $q^\ell\geq N^{10}$ (say).
In the following we estimate the sum on the righthand side of \eqref{e:Pmoment}. For $p\neq 2,3$, we write that
\begin{align}
\bE[\del_{ij}^p(G_{ij}P^{r-1}\bar P^r)]
=\bE[D_{ij}^p(G_{ij})P^{r-1}\bar P^r]+\text{other terms},
\end{align}
and show the leading order term cancels with $\bE[Q P^{r-1}\bar P^{r}]$ from our construction of $Q$, and the ``other terms" are negligible. For $p=2,3$, we write 
\begin{align}
\bE[\del_{ij}^p(G_{ij}P^{r-1}\bar P^r)]
=\bE[D_{ij}^p(G_{ij})P^{r-1}\bar P^r]-p\bE[G_{ii}G_{jj}\del_{ij}^{p-1}(P^{r-1}\bar P^r)]+\text{other terms},
\end{align}
 and the ``other terms" are negligible. We will show that 
 the leading order contribution  of the first term on the right hand side   cancels with $\bE[Q P^{r-1}\bar P^{r}]$, and  
that of  the second term cancels  with the  fluctuation term $\bE[\cX m_N^2P^{r-1}\bar P^{r}]$ in \eqref{e:Pmoment}. 

For the first order term in \eqref{e:Pmoment}, i.e. $p=1$,
\begin{align}
\begin{split}\label{e:p=1}
\frac{1}{N^2}\sum_{ij}\bE[\del_{ij}(G_{ij}P^{r-1}\bar P^r)]
=\frac{1}{N^2}\sum_{ij}\bE[\del_{ij}(G_{ij})P^{r-1}\bar P^r]
+\frac{1}{N^2}\sum_{ij}\bE[G_{ij}\del_{ij}(P^{r-1}\bar P^r)]
\end{split}
\end{align}
For the first term in \eqref{e:p=1}, thanks to \eqref{e:hderG}
\begin{align}
\begin{split}
&\phantom{{}+{}}\frac{1}{N^2}\sum_{ij}\bE[\del_{ij}(G_{ij})P^{r-1}\bar P^r]
=-\frac{1}{N^2}\sum_{ij}\bE[G_{ii}G_{jj}P^{r-1}\bar P^r]-\frac{1}{N^2}\sum_{ij}\bE[G_{ij}G_{ij}P^{r-1}\bar P^r]\\
&+\O_{\prec}\left(\bE\left[\frac{\Im[m_N]}{N\eta}|P|^{2r-1}\right]\right)=-\bE[m_N^2P^{r-1}\bar P^r]+\O_{\prec}\left(\bE\left[\frac{\Im[m_N]}{N\eta}|P|^{2r-1}\right]\right),
\end{split}
\end{align}
where we used \eqref{e:derivativeofG}.
%
For the second term in \eqref{e:p=1}, we use Proposition \ref{p:derPandG}. We write the second term in \eqref{e:p=1} as follows
\begin{align}
\begin{split}\label{e:p=1_2}
\frac{1}{N^2}\sum_{ij}\bE[G_{ij}\del_{ij}(P^{r-1}\bar P^r)]
=\frac{\O(1)}{N^2}\sum_{ij}\bE[G_{ij}(\del_{ij}P) P^{r-2}\bar P^r]+\frac{\O(1)}{N^2}\sum_{ij}\bE[G_{ij}\del_{ij}(\bar P) |P|^{2r-2}]
\end{split}
\end{align}
Using \eqref{e:derP}, the first term in \eqref{e:p=1_2} is
\begin{align}
\begin{split}
&\phantom{{}={}}\frac{\O(1)}{N^2}\sum_{ij}\bE[G_{ij}(D_{ij}m_N)P' P^{r-2}\bar P^r]
+\frac{\O(1)}{N^3}\sum_{ij}\bE[(2-\delta_{ij})h_{ij}G_{ij}(m_N^2+P'\del_z m_N  + m_N  ) P^{r-2}\bar P^r]\\
&=\O_\prec\left(\bE\left[\frac{\Im[m_N]}{(N\eta)^2}|P'| |P|^{2r-2}\right]\right)
+\frac{\O(1)}{N^3}\sum_{p\geq1}\frac{\cC_{p+1}}{Nq^{p-1}}\sum_{ij}\bE[(2-\delta_{ij})\del_{ij}^p(G_{ij}(m_N^2+P'\del_z m_N  + m_N  ) P^{r-2}\bar P^r)]\\
&=\O_\prec\left(\bE\left[\frac{\Im[m_N]}{(N\eta)^2}|P'| |P|^{2r-2}\right]
+\max_{s\geq 2}\bE\left[\left(\frac{\Im[m_N]}{N\eta}\right)^s |P|^{2r-s}\right]\right),
\end{split}
\end{align}
where in the first term we used 
\begin{align}
\left|\frac{1}{N^2}\sum_{ij}G_{ij}(D_{ij}m_N)\right|=\left|\frac{1}{N^3}\Tr G^3\right|\leq \frac{1}{N^3}\Tr|G|^3\leq \frac{\Im[m_N]}{(N\eta)^2},
\end{align}
and we used \eqref{e:hderP} for the second term.
The same estimates hold for the second term in \eqref{e:p=1_2}.
In summary, the discussion above together implies the following bound for \eqref{e:p=1} 
\begin{align}\label{e:p=1bound}
\eqref{e:p=1}
=-\bE[m_N^2P^{r-1}\bar P^r]+\O_\prec\left(\bE\left[\frac{\Im[m_N]}{(N\eta)^2}|P'| |P|^{2r-2}\right]
+\max_{s\geq 2}\bE\left[\left(\frac{\Im[m_N]}{N\eta}\right)^s |P|^{2r-s}\right]\right).
\end{align}

For the second order term in \eqref{e:Pmoment}, i.e. $p=2$, 
\begin{align}\begin{split}\label{e:p=2}
\frac{1}{N}\sum_{ij}\frac{\cC_{3}}{Nq}\bE[\del_{ij}^{2}(G_{ij}P^{r-1}\bar P^r)]
&=\frac{1}{N}\sum_{ij}\frac{\cC_{3}}{Nq}\bE[\del_{ij}^{2}(G_{ij})P^{r-1}\bar P^r]\\
&+\frac{1}{N}\sum_{ij}\frac{2\cC_{3}}{Nq}\bE[\del_{ij}(G_{ij})\del_{ij}(P^{r-1}\bar P^r)]+\frac{1}{N}\sum_{ij}\frac{\cC_{3}}{Nq}\bE[G_{ij}\del_{ij}^{2}(P^{r-1}\bar P^r)].
\end{split}\end{align}
For the first term in \eqref{e:p=2}, thanks to Proposition \ref{p:derPandG} and \eqref{e:derivativeofG}, we have
\begin{align}
\frac{1}{N}\sum_{ij}\frac{\cC_{3}}{Nq}\bE[\del_{ij}^{2}(G_{ij})P^{r-1}\bar P^r]
=\frac{1}{N}\sum_{ij}\frac{6\cC_{3}}{Nq}\bE[G_{ij}G_{ii}G_{jj}P^{r-1}\bar P^r]
+\O_{\prec}\left(\bE\left[\frac{\Im[m_N]}{N\eta}|P|^{2r-1}\right]\right).
\end{align}
We use the identities $G_{ij}=\sum_{k\neq i}G_{ii}h_{ik}G_{kj}^{(i)}$, and $G_{kj}^{(i)}=G_{kj}-G_{ki}G_{ji}/G_{ii}$. Then by the cumulant expansion, we have
\begin{align}\begin{split}
&\phantom{{}={}}\frac{1}{N^2q}\sum_{ij}\bE\left[G_{ij}G_{ii}G_{jj}P^{r-1}\bar P^r\right]
=\frac{1}{N^2q}\sum_{ij}\bE\left[\sum_{k\neq i}h_{ik}G_{kj}^{(i)}G^2_{ii}G_{jj}P^{r-1}\bar P^r\right]\\
&=\sum_{p\geq 1}\frac{\cC_{p+1}}{N^3q^{p}}\sum_{ij}\bE\left[\sum_{k\neq i}G_{kj}^{(i)}\del_{ik}^p\left(G^2_{ii}G_{jj}P^{r-1}\bar P^r\right)\right]\\
&=\sum_{p\geq 1}\frac{\cC_{p+1}}{N^3q^{p}}\sum_{ij}\bE\left[\sum_{k\neq i}G_{kj}G^2_{ii}G_{jj}\del_{ik}^p\left(P^{r-1}\bar P^r)\right)\right]
+\O_\prec\left(\max_{s\geq 1}\bE\left[\left(\frac{\Im[m_N]}{N\eta}\right)^s|P|^{2r-s}\right]\right)\\
&=\O_\prec\left(\max_{s\geq 1}\bE\left[\frac{1}{N\eta}\left(\frac{\Im[m_N]|P'|}{N\eta}\right)^s|P|^{2r-1-s}\right]+\max_{s\geq 1}\bE\left[\left(\frac{\Im[m_N]}{N\eta}\right)^s|P|^{2r-s}\right]\right)
\end{split}\end{align} 
where we used \eqref{e:derivativeofG} in the third line and that $\|G\|_{L_2\rightarrow L_2}\leq 1/\eta$, and hence 
\begin{align}
\left|\frac{1}{N^2}\bE\left[\sum_{kj}G_{jj}G_{kj}\del_{ik}^p(P^{r-1}\bar P^r)\right]\right|
\prec \max_{s\geq 1}\bE\left[\frac{1}{N\eta}\left(\frac{\Im[m_N]|P'|}{N\eta}\right)^{s}|P|^{2r-s-1}\right].
\end{align}
For the second term in \eqref{e:p=2}, using \eqref{e:hderG}, it is given by 
\begin{align}
\begin{split}
&-\sum_{ij}\frac{2\cC_3}{N^2q}\bE[G_{ii}G_{jj}\del_{ij}(P^{r-1}\bar P^r)]+\O_{\prec}\left(\bE\left[\left(\frac{\Im[m_N]}{N\eta}\right)^2|P|^{2r-2}\right]\right).
\end{split}\end{align}
For the last term in \eqref{e:p=2}, it is given by
\begin{align}\begin{split}\label{e:p=2_3}
&\phantom{{}+{}}\frac{\O(1)}{N^2q}\sum_{ij}\bE[G_{ij}(\del_{ij}^2P)P^{r-2}\bar P^r)]+\frac{\O(1)}{N^2q}\sum_{ij}\bE[G_{ij}(\del_{ij}P)^2P^{r-3}\bar P^r)]\\
&+\frac{\O(1)}{N^2q}\sum_{ij}\bE[G_{ij} (\del_{ij}^2\bar P)^2 P^{r-1}\bar P^{r-2})]+\frac{\O(1)}{N^2q}\sum_{ij}\bE[G_{ij}(\del_{ij}^2\bar P)|P|^{2r-2}]\\
&+\frac{\O(1)}{N^2q}\sum_{ij}\bE[G_{ij}(\del_{ij}P)(\del_{ij}\bar P)P^{r-2}\bar P^{r-1}].
\end{split}\end{align}
We estimate the first and second terms; the other terms can be estimated in the same way.
Using \eqref{e:derP}, \eqref{e:hderP} and \eqref{e:derivativeofm}, we can write the first term in \eqref{e:p=2_3} as
\begin{align}\begin{split}\label{e:2Pterm}
&\phantom{{}={}}\frac{1}{N^2q}\sum_{ij}\bE  \left[G_{ij}\del_{ij}\left( P'D_{ij}m_N+(2-\delta_{ij})h_{ij}(m_N^2+m_N+P'\del_zm_N)/N \right)  P^{r-2}\bar P^r \right] \nc\\
& =\frac{1}{N^2q}\sum_{ij}\bE[G_{ij}(P'(D^2_{ij}m_N+2D_{ij}\del_zm_N\del_{ij}\tilde z)+P''(D_{ij}m_N)^2+(2-\delta_{ij})(m_N^2+m_N+P'\del_zm_N)/N)P^{r-2}\bar P^r]\\
&+\O_{\prec}\left(\bE\left[\left(\left(\frac{\Im[m_N]}{N\eta}\right)^2+\frac{\Im[m_N]|P'|}{(N\eta)^2}\right)|P|^{2r-2}\right]\right).
\end{split}\end{align}
We estimate the first term on the RHS of  \eqref{e:2Pterm} in two parts. For the first part,
\begin{align}
\begin{split}
&\phantom{{}={}}\left|\frac{1}{N^2q}\sum_{ij}G_{ij}P'(D^2_{ij}m_N)\right|
\leq \frac{\O(1)|P'|}{N^3q}\left|\sum_{ijk}G_{ij}(G_{kj}G_{ii}G_{jk}+G_{ki}G_{ji}G_{jk})\right|\\
&\leq \frac{\O(1)|P'|}{N^3q}\left|\sum_{ij}G_{ii}G_{ij}\sum_k G^2_{kj}\right|
+\frac{\O(1)|P'|}{N^3q}\sum_{ijk}|G^2_{ij}|\sum_{k}|G_{ki}G_{jk}|\\
&\prec \frac{\Im[m_N]|P'|}{q(N\eta)^2},
\end{split}\end{align}
where we used that $\|G\|_{L_2\rightarrow L_2}\leq 1/\eta$, and \eqref{e:derivativeofG}.
For the second part, we have
\begin{align}
\begin{split}
\frac{1}{N^2q}\left|\sum_{ij}G_{ij}P'D_{ij}\del_zm_N\del_{ij}\tilde z\right|
&\leq 
\frac{\O(1)}{N^4q}\left|\sum_{ij}P'G_{ij}(G^3)_{ij}(2-\delta_{ij})h_{ij}\right|\\
&\prec \frac{|P'|}{N^4q^2}\sum_{ij}\left|(G^3)_{ij}\right|
\prec \frac{\Im[m_N]|P'|}{(N\eta)^2}.
\end{split}\end{align}
For the second term on the righthand side of \eqref{e:2Pterm}, we have
\begin{align}\label{e:2MNterm}
\left|\frac{1}{N^2q}\sum_{ij}G_{ij}D_{ij}m_ND_{ij}m_N\right|
\leq \frac{1}{N^3q}\sum_{i}\left|\sum_{jk}(G_{ij}D_{ij}m_N)G_{jk}(G_{ki})\right|\prec \frac{(\Im[m_N])^2}{q(N\eta)^3},
\end{align}
where we used that $\|G\|_{L_2\rightarrow L_2}\leq 1/\eta$, $\sum_{i}|G_{ki}|^2\prec \Im[m_N]/\eta$, and $\sum_i|G_{ij}D_{ij}m_N|^2\prec (\Im[m_N])^3/N\eta^2$.
For the last term in \eqref{e:2Pterm}, since $\|G\|_{L_2\rightarrow L_2}\leq 1/\eta$, we have 
\begin{align}
\left|\frac{1}{N^2}\sum_{ij}G_{ij}\right|\leq \frac{1}{N\eta}.
\end{align}
It follows that the first term in \eqref{e:p=2_3} can be bounded by
\begin{align}\label{e:1termbound}
\O_{\prec}\left(\bE\left[\left(\left(\frac{\Im[m_N]}{N\eta}\right)^2+\frac{\Im[m_N]|P'|}{(N\eta)^2}\right)|P|^{2r-2}\right]\right).
\end{align}
The second term in \eqref{e:p=2_3} can be rewritten as 
\begin{align}\begin{split}\label{e:p=2_3_1}
&\phantom{{}={}}\frac{1}{N^2q}\sum_{ij}\bE[G_{ij}(\del_{ij}P)^2P^{r-3}\bar P^r)]
+\frac{1}{N^2q}\sum_{ij}\bE[G_{ij}(P'D_{ij}m_N)(P'D_{ij}m_N)P^{r-3}\bar P^r)]\\
&=\frac{1}{N^2q}\sum_{ij}\frac{2-\delta_{ij}}{N}\bE[h_{ij}G_{ij}(P'\del_zm_N+m_N^2+m_N)(P'D_{ij}m_N+\del_{ij}P)P^{r-3}\bar P^r)].
\end{split}
\end{align}
For the first term in \eqref{e:p=2_3_1}, we use the estimate \eqref{e:2MNterm}, and it follows that
\begin{align}
\left|\frac{1}{N^2q}\sum_{ij}\bE[G_{ij}(P'D_{ij}m_N)(P'D_{ij}m_N)P^{r-3}\bar P^r)]\right|\prec \bE\left[\frac{\Im[m_N]}{N\eta}\left(\frac{\Im[m_N]|P'|}{(N\eta)^2}\right) |P|^{2r-3} \right].
\end{align}
For the second term in \eqref{e:p=2_3_1}, we again use the cumulant expansion and find that it is given by
\begin{align}\begin{split}
&\phantom{{}={}}\frac{\cC_{p+1}}{N^3q^{p-1}}\sum_{p\geq1}\sum_{ij}\frac{2-\delta_{ij}}{N}\bE[\del_{ij}^p(G_{ij}(P'\del_zm_N+m_N^2+m_N)(P'D_{ij}m_N+\del_{ij}P)P^{r-3}\bar P^r))]\\
&=\O_{\prec}\left(\max_{s\geq 3}\bE\left[\left(\frac{\Im[m_N]}{N\eta}\right)^s|P|^{2r-s}\right]\right).
\end{split}\end{align}
It follows that the second term in \eqref{e:p=2_3} is bounded by
\begin{align}\begin{split}\label{e:2termbound}
\O_{\prec}\left( \bE\left[\frac{\Im[m_N]}{N\eta}\left(\frac{\Im[m_N]|P'|}{(N\eta)^2}\right) |P|^{2r-3} \right]+ \max_{s\geq 3}\bE\left[\left(\frac{\Im[m_N]}{N\eta}\right)^s|P|^{2r-s}\right]\right).
\end{split}\end{align}
By combining the estimates \eqref{e:1termbound} and \eqref{e:2termbound}, the last term in \eqref{e:p=2} can be bounded by
\begin{align}
\O_{\prec}\left( \max_{s=0,1}\bE\left[\left(\frac{\Im[m_N]}{N\eta}\right)^s\left(\frac{\Im[m_N]|P'|}{(N\eta)^2}\right) |P|^{2r-2-s} \right]+ \max_{s\geq 2}\bE\left[\left(\frac{\Im[m_N]}{N\eta}\right)^s|P|^{2r-s}\right]\right).
\end{align}
In summary, the discussion above implies the following bound for \eqref{e:p=2} 
\begin{align}\label{e:p=2bound}
\eqref{e:p=2}\prec \max_{s=0,1}\bE\left[\left(\frac{\Im[m_N]}{N\eta}\right)^s\left(\frac{\Im[m_N]|P'|}{(N\eta)^2}\right) |P|^{2r-2-s} \right]+ \max_{s\geq 2}\bE\left[\left(\frac{\Im[m_N]}{N\eta}\right)^s|P|^{2r-s}\right].
\end{align}


For the third order term in \eqref{e:Pmoment}, i.e., $p=3$, 
\begin{align}\label{e:p=3bound}
\begin{split}
&\phantom{{}={}}\frac{\cC_4}{N^2q^2}\sum_{ij}\bE[\del_{ij}^{3}(G_{ij}P^{r-1}\bar P^r)]
=\frac{3\cC_4}{N^2q^2}\sum_{ij}\bE[\del_{ij}(G_{ij}) \del_{ij}^2(P^{r-1}\bar P^r)]\\
&+\frac{\cC_4}{N^2q^2}\sum_{ij}\bE[\del_{ij}^3(G_{ij})P^{r-1}\bar P^r]+\O_{\prec}\left(\left(\frac{1}{q^3}+\frac{1}{N\eta}\right)\max_{s\geq 1}\bE\left[\left(\frac{\Im[m_N]|P'|}{N\eta}\right)^s |P|^{2r-1-s}\right]\right),
\end{split}\end{align}
where we used that 
$ \max_{i\neq j}|G_{ij}|\prec 1/q+1/\sqrt{N\eta}$.

For the higher order terms in \eqref{e:Pmoment}, i.e., $p\geq4$,
\begin{align}\label{e:p=4bound}
\begin{split}
\frac{\cC_{p+1}}{N^2q^{p-1}}\sum_{ij}\bE[\del_{ij}^{p}(G_{ij}P^{r-1}\bar P^r)]
&=
\frac{\cC_{p+1}}{N^2q^{p-1}}\sum_{ij}\bE[\del_{ij}^p(G_{ij})P^{r-1}\bar P^r]\\
&+\O_\prec\left(\frac{1}{q^3}\max_{s\geq 1}\bE\left[\left(\frac{\Im[m_N]|P'|}{N\eta}\right)^s |P|^{2r-1-s}\right]\right).
\end{split}\end{align}

We combine all the estimates \eqref{e:p=1bound}, \eqref{e:p=2bound}, \eqref{e:p=3bound} and \eqref{e:p=4bound} together, and use \eqref{e:hderG}
\begin{align}\begin{split}\label{e:sum1}
\bE[|P|^{2r}]
&=\bE[(Q-m_N^2)P^{r-1}\bar P^r]
+\sum_{p\geq 3}\sum_{ij}\frac{\cC_{p+1}}{Nq^{p-1}}\bE[(D_{ij}^pG_{ij})P^{r-1}\bar P^r]\\
&+\left(\bE[\cX m_N^2P^{r-1}\bar P^{r}]-\sum_{ij}\frac{2\cC_3}{N^2q}\bE[G_{ii}G_{jj}\del_{ij}(P^{r-1}\bar P^r)]-\frac{3\cC_4}{N^2q^2}\sum_{ij}\bE[G_{ii}G_{jj}\del_{ij}^{ 2} (P^{r-1}\bar P^r])\right)\\
&+\O_{\prec}\left(\left(\frac{1}{q^3}+\frac{1}{N\eta}\right)\max_{s\geq 1}\bE\left[\left(\frac{\Im[m_N]|P'|}{N\eta}\right)^s |P|^{2r-1-s}\right]+\max_{s\geq 1}\bE\left[\left(\frac{\Im[m_N]}{N\eta}\right)^s|P|^{2r-s}\right]\right).
\end{split}\end{align}
In the following we show that the second line is negligible, and then Proposition \ref{t:Pmoment} will shortly follow. We will soon see that there is a cancellation between those terms from $p=3$ and the random term $\cX m_N^2P^{r-1}\bar P^r$, which is the reason we use the random polynomial $P(z,m)$ instead of $P_0(z, m)$.   We calculate $\cX$ term:
\begin{align}
\begin{split}\label{e:p=3}
&{\phantom{{}={}}}\bE[\cX m_N^2P^{r-1}\bar P^{r}]
=\frac{1}{N}\sum_{ij}\bE[h_{ij}^2 m_N^2P^{r-1}\bar P^{r}]-\frac{1}{N^2}\sum_{ij}\bE[ m_N^2P^{r-1}\bar P^{r}]\\
&=\frac{1}{N}\sum_{ij}\sum_{p\geq 1} \frac{\cC_{p+1}}{Nq^{p-1}}\bE[\del_{ij}^p(h_{ij} m_N^2P^{r-1}\bar P^{r})]-\frac{1}{N^2}\sum_{ij}\bE[ m_N^2P^{r-1}\bar P^{r}]\\
&=\frac{1}{N}\sum_{ij} \sum_{p\geq1}\frac{\cC_{p+1}}{Nq^{p-1}}\bE[h_{ij} \del_{ij}^p(m_N^2P^{r-1}\bar P^{r})]+\frac{1}{N}\sum_{ij}\sum_{p\geq 2} \frac{p\cC_{p+1}}{Nq^{p-1}}\bE[\del_{ij}(h_{ij})\del_{ij}^{p-1}( m_N^2P^{r-1}\bar P^{r})].
\end{split}
\end{align}
For the first term in \eqref{e:p=3}, we use the cumulant expansion again,
\begin{align}\begin{split}
\sum_{ij} \sum_{p\geq1}\frac{\cC_{p+1}}{N^2q^{p-1}}\bE[h_{ij} \del_{ij}^p(m_N^2P^{r-1}\bar P^{r})]
&=\sum_{ij} \sum_{p,p'\geq1}\frac{\cC_{p+1}\cC_{p'+1}}{N^3q^{p+p'-2}}\bE[\del_{ij}^{p+p'}(m_N^2P^{r-1}\bar P^{r})]\\
&=\O_\prec\left(\max_{s\geq 2}\bE\left[\left(\frac{\Im[m_N]}{N\eta}\right)^s |P|^{2r-s}\right]\right).
\end{split}\end{align}
For the second term in \eqref{e:p=3}, we have
\begin{align}\begin{split}
&\phantom{{}={}}\sum_{ij}\sum_{p\geq 2} \frac{p\cC_{p+1}}{N^2q^{p-1}}\bE[\del_{ij}^{p-1}( m_N^2P^{r-1}\bar P^{r})]\\
&=\sum_{ij}\sum_{p\geq 2} \frac{p\cC_{p+1}}{N^2q^{p-1}}\bE[m_N^2 \del_{ij}^{p-1}( P^{r-1}\bar P^{r})]+\O_\prec\left(\max_{s\geq 2}\bE\left[\left(\frac{\Im[m_N]}{N\eta}\right)^s |P|^{2r-s}\right]\right)\\
&=\sum_{ij} \frac{2\cC_{3}}{N^2q}\bE[m_N^2 \del_{ij}( P^{r-1}\bar P^{r})]+\sum_{ij} \frac{3\cC_{4}}{N^2q^{2}}\bE[m_N^2 \del_{ij}^{2}( P^{r-1}\bar P^{r})]\\
&+\O_\prec\left(\frac{1}{q^3}\max_{s\geq 1}\bE\left[\left(\frac{\Im[m_N]|P'|}{N\eta}\right)^s |P|^{2r-1-s}\right]+\max_{s\geq 2}\bE\left[\left(\frac{\Im[m_N]}{N\eta}\right)^s |P|^{2r-s}\right]\right).
\end{split}\end{align}
Therefore, the second line in \eqref{e:sum1} simplifies to
\begin{align}
\begin{split}\label{e:cancel}
&\phantom{{}={}}\frac{2\cC_3}{N^2q}\sum_{ij}\bE[(m_N^2-G_{ii}G_{jj})\del_{ij}(P^{r-1}\bar P^r)]+\frac{3\cC_4}{N^2q^2}\sum_{ij}\bE[(m_N^2-G_{ii}G_{jj})\del_{ij}^2(P^{r-1}\bar P^r)]\\
&+\O_{\prec}\left(\frac{1}{q^3}\max_{s\geq 1}\bE\left[\left(\frac{\Im[m_N]|P'|}{N\eta}\right)^s |P|^{2r-1-s}\right]+\max_{s\geq 2}\bE\left[\left(\frac{\Im[m_N]}{N\eta}\right)^s |P|^{2r-s}\right]\right)\\
&=\frac{2\cC_3}{N^2q}\sum_{ij}\bE[(m_N^2-G_{ii}G_{jj})\del_{ij}(P^{r-1}\bar P^r)]\\
&+\O_{\prec}\left(\left(\frac{1}{q^3}+\frac{1}{N\eta}\right)\max_{s\geq 1}\bE\left[\left(\frac{\Im[m_N]|P'|}{N\eta}\right)^s |P|^{2r-1-s}\right]+\max_{s\geq 2}\bE\left[\left(\frac{\Im[m_N]}{N\eta}\right)^s |P|^{2r-s}\right]\right),
\end{split}
\end{align}
where we used that $|G_{ii}-m_N|\prec 1/q+1/\sqrt{N\eta}$.
%
For the first term on the righthand side of \eqref{e:cancel}, notice that
\begin{align}\begin{split}
m_N^2-G_{ii}G_{jj}
&=(m_N-G_{ii})m_N+(m_N-G_{jj})m_N-(m_N-G_{ii})(m_N-G_{jj})\\
&=(m_N-G_{ii})m_N+(m_N-G_{jj})m_N+\O_\prec\left(\frac{1}{q^2}+\frac{1}{N\eta}\right).
\end{split}\end{align}
Thus the  first term in the righthand side of \eqref{e:cancel} simplifies to
\begin{align}\begin{split}\label{e:cancel_1}
&\phantom{{}={}}\frac{\O(1)}{N^2q}\sum_{ij}\bE[(m_N-G_{jj})m_N(\del_{ij}P)P^{r-2}\bar P^r]
+\frac{\O(1)}{N^2q}\sum_{ij}\bE[(m_N-G_{jj})m_N(\del_{ij}\bar P)|P|^{2r-2}]\\
&+\frac{\O(1)}{N^2q}\sum_{ij}\bE[(m_N-G_{ii})m_N(\del_{ij}P)P^{r-2}\bar P^r]
+\frac{\O(1)}{N^2q}\sum_{ij}\bE[(m_N-G_{ii})m_N(\del_{ij}\bar P)|P|^{2r-2}]\\
&+ \O_\prec\left(\left(\frac{1}{q^3}+\frac{1}{N\eta}\right)\bE\left[\frac{\Im[m_N]|P'|}{N\eta}|P|^{2r-2}\right]\right).
\end{split}\end{align}
By symmetry, we only need to estimate the first term in \eqref{e:cancel_1}, as the other terms can be estimated in the same way. Using \eqref{e:derP}, we have
\begin{align}
\begin{split}\label{e:cancel_1_1}
&\phantom{{}={}}\frac{1}{N^2q}\sum_{ij}\bE[(m_N-G_{jj})m_N(\del_{ij}P)P^{r-2}\bar P^r]
=\frac{1}{N^2q}\sum_{ij}\bE[(m_N-G_{jj})m_NP'(D_{ij}m_N)P^{r-2}\bar P^r]\\
&+\frac{1}{N^2q}\sum_{ij}\frac{2-\delta_{ij}}{N}\bE[h_{ij}(m_N-G_{jj})m_N(P'\del_zm_N+m_N^2+m_N)P^{r-2}\bar P^r].
\end{split}
\end{align}
We have that $D_{ij}m_N=\sum_k G_{ki}G_{kj}/N$. Using the identity $G_{ki}=\sum_{\ell\neq i}h_{i\ell}G_{\ell k}^{(i)}G_{ii}$, we can write the first term in \eqref{e:cancel_1_1} as
\begin{align}\begin{split}\label{e:cancel_1_1_1}
&\phantom{{}={}}\frac{1}{N^3q}\sum_{ijk}\bE[G_{ki}G_{kj}(m_N-G_{jj})m_NP'P^{r-2}\bar P^r]\\
&=\frac{1}{N^3q}\sum_{ijk}\sum_{\ell\neq i}\bE[h_{i\ell}G^{(i)}_{\ell k}G_{ii}G_{kj}(m_N-G_{jj})m_NP'P^{r-2}\bar P^r]\\
&=\sum_{p\geq 1}\frac{\cC_{p+1}}{N^4q^{p}}\sum_{ijk}\sum_{\ell\neq i}\bE[G^{(i)}_{\ell k}\del_{i\ell}^p(G_{ii}G_{kj}(m_N-G_{jj})m_NP'P^{r-2}\bar P^r)]\\
&=\frac{\O(1)}{N^4q}\sum_{ijk}\sum_{\ell\neq i}\bE[G^{(i)}_{\ell k}G_{ii}G_{kj}(m_N-G_{jj})m_NP'(\del_{i\ell}P)P^{r-3}\bar P^r)]\\
&+\frac{\O(1)}{N^4q}\sum_{ijk}\sum_{\ell\neq i}\bE[G^{(i)}_{\ell k}G_{ii}G_{kj}(m_N-G_{jj})m_NP'(\del_{i\ell}\bar P)P^{r-2}\bar P^{r-1})]\\
&+\O_\prec\left(\max_{s\geq 1}\bE\left[\left(\frac{1}{q^3}+\frac{1}{N\eta}\right)\left(\frac{\Im[m_N]|P'|}{N\eta}\right)^s|P|^{2r-1-s}\right]\right).
\end{split}\end{align}
For the first term on the righthand side of \eqref{e:cancel_1_1_1}, we have
\begin{align}\begin{split}
\left|\frac{1}{N^4q}\sum_{ijk}\sum_{\ell\neq i}G^{(i)}_{\ell k}G_{ii}G_{kj}(m_N-G_{jj})(\del_{i\ell}P)\right|
&\leq \frac{1}{N^4q}\sum_{ij}|G_{ii}|\left|\sum_{\ell\neq i, k}(\del_{i\ell}P)G^{(i)}_{\ell k}(G_{kj}(m_N-G_{jj}))\right|\\
&\prec \frac{\Im[m_N]|P'|}{(N\eta)^2}\sqrt{\frac{\Im[m_N]}{N\eta}}\left(\frac{1}{q^2}+\frac{1}{N\eta}\right),
\end{split}\end{align}
where we used that $\|G^{(i)}\|_{L_2\rightarrow L_2}\leq 1/\eta$, $|\del_{i\ell}P|\prec \Im[m_N]|P'|/N\eta$, $|m_N-G_{jj}|\prec 1/q+1/\sqrt{N\eta}$, and $\sum_{k}|G_{kj}|^2\leq \Im[m_N]/\eta$.
The same estimate holds for the second term on the righthand side of \eqref{e:cancel_1_1_1}. Combining the above estimates, it follows that the first term in \eqref{e:cancel_1_1} is bounded by
\begin{align}\label{e:cancelline2}
\O_{\prec}\left(\max_{s\geq 1}\bE\left[\left(\frac{1}{q^3}+\frac{1}{N\eta}\right)\left(\frac{\Im[m_N]|P'|}{N\eta}\right)^s|P|^{2r-1-s}\right]+\bE\left[\frac{\Im[m_N]|P'|}{(N\eta)^2}\sqrt{\frac{\Im[m_N]|P'|}{N\eta}}\left(\frac{1}{q^2}+\frac{1}{N\eta}\right)|P|^{2r-3}\right]\right).
\end{align}
For the second term on righthand side of \eqref{e:cancel_1_1}, we use the cumulant expansion again, 
\begin{align}\begin{split}\label{e:cancelline3}
&\phantom{{}={}}\frac{1}{N^2q}\sum_{ij}\frac{2-\delta_{ij}}{N}\bE[h_{ij}(m_N-G_{jj})m_N(P'\del_zm_N+m_N^2+m_N)P^{r-2}\bar P^r]\\
&=\sum_{p\geq 1}\frac{\cC_{p+1}}{N^3q^p}\sum_{ij}\frac{2-\delta_{ij}}{N}\bE[\del_{ij}^p((m_N-G_{jj})m_N(P'\del_zm_N+m_N^2+m_N)P^{r-2}\bar P^r)]\\
&=\O_\prec\left(\max_{s\geq2}\bE\left[\left(\frac{\Im[m_N]}{N\eta}\right)^s|P|^{2r-s}\right]\right).
\end{split}\end{align}

The estimates \eqref{e:sum1}, \eqref{e:cancel}, \eqref{e:cancel_1}, \eqref{e:cancel_1_1}, \eqref{e:cancelline2} and \eqref{e:cancelline3} all together lead to the following estimate 
\begin{align}\begin{split}\label{e:sum2}
\bE[|P|^{2r}]
&=\bE[(Q-m_N^2)P^{r-1}\bar P^r]
+\sum_{p\geq 3}\sum_{ij}\frac{\cC_{p+1}}{Nq^{p-1}}\bE[(D_{ij}^pG_{ij})P^{r-1}\bar P^r]\\
&+\O_\prec\left(\max_{s_1+s_2\geq1}\bE\left[\left(\left(\frac{1}{q^3}+\frac{1}{N\eta}\right)\left(\frac{\Im[m_N]|P'|}{N\eta}\right)\right)^{s_1/2}\left(\frac{\Im[m_N]}{N\eta}\right)^{s_2}|P|^{2r-s_1-s_2}\right]\right).
\end{split}\end{align}
We have left the first line in \eqref{e:sum2}.  This is estimated similarly to Proposition \ref{p:DSE}. 
More precisely, we repeat the procedure as in Proposition \ref{p:DSE}, by repeatedly using the cumulant expansion. Derivatives hitting $\tilde z$ give an extra copy of $\Im[m_N]/N\eta$ (see \eqref{e:hderG}) and so the resulting term can be bounded by
\begin{align}
\O_\prec \left(\max_{s\geq 1}\bE\left[\left(\frac{\Im[m_N]}{N\eta}\right)^s|P|^{2r-s}\right]\right).
\end{align}
If the derivative hits $P$ or $\bar P$, we get a factor of $\Im[m_N]|P'|/qN\eta$, and the resulting term can be bounded by
\begin{align}
\O_\prec \left(\max_{s\geq 0}\bE\left[\frac{\Im[m_N]|P'|}{q^3N\eta}\left(\frac{\Im[m_N]}{N\eta}\right)^s|P|^{2r-s-2}\right]\right).
\end{align}
If the derivative never hits $\tilde z,P, \bar P$, these terms will cancel with $Q$. The estimate \eqref{e:Pmoment} follows.



\end{proof}

%
%

\subsection{Proof of Theorem \ref{thm:edgerigidity} }
\label{s:proofrigidity}
We only analyze the behavior of the Stieltjes transform $m_N$ close to the right edge $\tilde L$. The case that $m_N$ close to the left edge can be analyzed in the same way.  Recall the shifted spectral domain $\cD$ from \eqref{e:defD}.

\begin{proposition}\label{p:stable}
There exists a constant $\varepsilon>0$ such that the following holds. Suppose that $\delta:\cD\rightarrow \mathbb{R}$ is a function so that 
\begin{align}
|P(L+\cX+z, m_N(L+\cX+z))|\leq \delta(z).
\end{align}
Suppose that $N^{-2}\leq \delta(z)\leq \varepsilon$ for $z\in \cD$, that $\delta$ is Lipschitz continuous with Lipschitz constant $N$ and moreover that for each fixed $\kappa$ the function $\eta\mapsto \delta(\kappa+\ri\eta)$ is nonincreasing for $\eta>0$.   Then,
\begin{align}
|m_N(L+\cX+z)-\tilde m_\infty(L+\cX+z)|=\OO\left(\frac{\delta(z)}{\sqrt{|\kappa|+\eta+\delta(z)}}\right),
\end{align}
where the implicit constant is independent of $z$ and $N$.
\end{proposition}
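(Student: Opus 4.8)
The proof is the standard stability argument for a square-root self-consistent equation near the edge; we sketch it, following \cite[Lemma 4.1]{Lee2016}. Write $\tilde z = L+\cX+z$ with $z=\kappa+\ri\eta\in\cD$ and set $u=u(z)\deq m_N(\tilde z)-\tilde m_\infty(\tilde z)$. Since $P(\tilde z,\cdot)$ is a polynomial and $\tilde m_\infty(\tilde z)$ is one of its zeros, Taylor expansion in the second variable is exact:
\begin{align}\label{e:stabtaylor}
P(\tilde z, m_N(\tilde z)) = \del_2 P(\tilde z,\tilde m_\infty(\tilde z))\,u + \tfrac12\,\del_2^2 P(\tilde z,\tilde m_\infty(\tilde z))\,u^2 + \O\!\left(\frac{|u|^3}{q^2}\right),
\end{align}
the remainder collecting the cubic and higher terms, whose coefficients are $\O(q^{-2})$ by \eqref{e:defQ}; since $\tilde m_\infty$ is bounded, this series is summable and $\O(|u|^3 q^{-2})$ provided $|u|\leq c$. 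By \eqref{e:derPest}, together with $\tilde L=L+\cX+\O_\prec((\sqrt N q^3)^{-1})$ and the lower bound $|\kappa|+\eta\geq N^\fa(\sqrt N q^3)^{-1}$ built into $\cD$ (so that $\dist(\Re\tilde z,\{\pm\tilde L\})+\eta\asymp|\kappa|+\eta$),
\begin{align}\label{e:stabderP}
\alpha\deq\absb{\del_2 P(\tilde z,\tilde m_\infty(\tilde z))}\asymp\sqrt{|\kappa|+\eta},\qquad \absb{\del_2^2 P(\tilde z,\tilde m_\infty(\tilde z))}\asymp 1.
\end{align}

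Inserting \eqref{e:stabderP} into \eqref{e:stabtaylor}, using the hypothesis $|P(\tilde z,m_N(\tilde z))|\leq\delta(z)$, and choosing $c$ small enough that the cubic remainder is at most $\tfrac14\absb{\del_2^2 P(\tilde z,\tilde m_\infty(\tilde z))}\,|u|^2$ (possible since $q\gg1$), one obtains, under the a priori assumption $|u|\leq c$, the quadratic inequality
\begin{align}\label{e:stabquad}
\absb{\del_2 P(\tilde z,\tilde m_\infty(\tilde z))\,u + \tfrac12\del_2^2 P(\tilde z,\tilde m_\infty(\tilde z))\,u^2}\leq \delta(z) + \tfrac14\absb{\del_2^2 P(\tilde z,\tilde m_\infty(\tilde z))}\,|u|^2.
\end{align}
Factoring the left side as $\tfrac12\del_2^2 P\cdot u\,(u+2\del_2 P/\del_2^2 P)$ and using $|\del_2 P/\del_2^2 P|\asymp\alpha$, an elementary analysis shows that the solution set of \eqref{e:stabquad} inside $\{|u|\leq c\}$ is contained in the union of a \emph{small branch} $\{|u|\lesssim \delta(z)/\sqrt{\alpha^2+\delta(z)}\}$ and a \emph{large branch} $\{|u|\asymp\alpha\}$; moreover the large branch is nonempty only when $\delta(z)\lesssim\alpha^2$, and in that case it is separated from the small branch by a gap of order $\alpha$. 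Since $\alpha^2\asymp|\kappa|+\eta$, the small-branch bound is precisely $\O\big(\delta(z)/\sqrt{|\kappa|+\eta+\delta(z)}\big)$, so it suffices to prove that $u(z)$ lies on the small branch for all $z\in\cD$.

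For this we run a continuity argument, using that $\cD$ is connected. On the overwhelming-probability event where the local law of Theorem \ref{locallaw} holds, at each $z=\kappa+\ri$ with $\eta=1$ we have $\alpha\asymp1$, $|m_N(\tilde z)-m_{sc}(\tilde z)|\prec q^{-1}$, and, since $\tilde m_\infty$ solves an $\O(q^{-2}+|\cX|)$-perturbation of the semicircle equation at a point at distance $\asymp 1$ from its spectral edge, $|m_{sc}(\tilde z)-\tilde m_\infty(\tilde z)|\ll1$; hence $|u(z)|\ll1$ there, so $u(z)$ lies on the small branch. Let $S\subseteq\cD$ be the set of $z$ with $|u(z)|$ at most a fixed large multiple of $\delta(z)/\sqrt{|\kappa|+\eta+\delta(z)}$. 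Then $S$ is closed by continuity of $m_N(\tilde z)$, $\tilde m_\infty(\tilde z)$ and $\delta$; and $S$ is open, because at any $z_0\in S$ the value $|u(z_0)|$ lies strictly below the order-$\alpha$ gap when that gap is present (and the whole solution set of \eqref{e:stabquad} is the small branch when it is not), so by continuity $u$ cannot cross the gap on a neighborhood of $z_0$. Since $S$ contains the segment $\eta=1$ and $\cD$ is connected, $S=\cD$, which is the claim. The step requiring the most care is this last one: one must control the Lipschitz constants of $m_N(\tilde z)$ and $\tilde m_\infty(\tilde z)$ — which are polynomial in $N$ by $|\del_z m_N|\leq\eta^{-2}$ and the lower bound on $\eta$ forced by $z\in\cD$ — along with that of $\delta$, and check that the small-branch bound never grows into the large-branch regime as $\eta$ decreases toward the boundary of $\cD$, so that the dichotomy in \eqref{e:stabquad} never degenerates.
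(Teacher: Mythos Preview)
Your proof is correct and follows the same approach as the paper: Taylor-expand $P(\tilde z,\cdot)$ around its root $\tilde m_\infty(\tilde z)$, use the estimates \eqref{e:derPest} on $\del_2 P$ and $\del_2^2 P$ to reduce to a quadratic stability inequality, and then run a continuity argument in $z$ to select the small branch. The paper's proof is terser---it writes down the quadratic relation \eqref{e:cReq} and then defers the entire continuity argument to \cite[Lemma~4.5]{MR3183577}---while you spell out the two-branch dichotomy and the open/closed argument.

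One minor point: your initialization at $\eta=1$ invokes the local law of Theorem~\ref{locallaw}, which is a probabilistic statement, whereas Proposition~\ref{p:stable} is phrased as a deterministic implication from the hypothesis $|P|\leq\delta$. The cleaner initialization (and the one implicit in the cited reference) is purely deterministic: at $\eta=1$ the two roots of $P(\tilde z,\cdot)$ in $\cD_w$ are $O(q^{-2}+|\cX|)$-perturbations of $m_{sc}(\tilde z)$ and $1/m_{sc}(\tilde z)$, which have imaginary parts of order $\pm 1$; since $\Im m_N(\tilde z)>0$ always, $m_N$ must be near the root with positive imaginary part, i.e.\ near $\tilde m_\infty$. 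This is cosmetic for the applications (where the local law holds anyway), but it keeps the proof faithful to the stated hypothesis.
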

\begin{proof}
We abbreviate $\tilde z=L+\cX+z$. 
 From \eqref{e:derPest}, we have $|\del_2 P(\tilde z, \tilde m_{\infty}(\tilde z))|\asymp \sqrt{|\kappa|+\eta}$, and $\del_2^2 P(\tilde z, \tilde m_{\infty}(\tilde z))=1+\OO(1/q^2)$. By  a Taylor expansion, we have
\begin{align}
-P(\tilde z,  m_N(\tilde z))+\del_2 P(\tilde z, \tilde m_\infty(\tilde z))(m_N(\tilde z)-\tilde m_\infty(\tilde z))+(1+\oo(1))(m_N(\tilde z)-\tilde m_\infty(\tilde z))^2=0,
\end{align}
We abbreviate $\cR(\tilde z)\deq P(\tilde z,m_N(\tilde z))$. There exists $a(\tilde z)\asymp\sqrt{|\kappa|+\eta}$ and $b(\tilde z)\asymp 1$, such that
\begin{align}\label{e:cReq}
\cR(\tilde z)=a(\tilde z)(m_N(\tilde z)-\tilde m_\infty(\tilde z))+b(\tilde z)(m_N(\tilde z)-\tilde m_\infty(\tilde z))^2.
\end{align}
With \eqref{e:cReq} in hand, Proposition \ref{p:stable} follows by a continuity argument essentially the same as \cite[Lemma 4.5]{MR3183577}.
\end{proof}


Before proving Theorem \ref{thm:edgerigidity}, we first prove a weaker estimate.
\begin{proposition}
Let $H$ be as in Definition \ref{asup}. Let $m_N(z)$ be the Stieltjes transform of its eigenvalue density, and $\tilde m_\infty(z)$ as defined in Proposition \ref{p:tminfty}. Uniformly for any $z=\kappa+\ri\eta\in \cD$ as defined in \eqref{e:recalldefD}, letting $\tilde z=L+\cX+z$, we have
\begin{align}
|m_N(\tilde z)-\tilde m_\infty(\tilde z)|\prec \sqrt{|\kappa|+\eta}.
\end{align}
\end{proposition}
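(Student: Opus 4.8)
The plan is to obtain this weak bound by a short bootstrap that alternates the recursive moment estimate of Proposition~\ref{t:Pmoment} with the stability estimate of Proposition~\ref{p:stable}, seeded by the entrywise local law (Theorem~\ref{locallaw}); I only treat the right edge $\tilde z=L+\cX+z$, the left edge being identical.

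\textbf{Seed.} First I would apply Theorem~\ref{locallaw} at the (random but admissible) argument $\tilde z$ and average over the diagonal, which gives $|m_N(\tilde z)-m_{sc}(\tilde z)|\prec q^{-1}+\sqrt{\Im m_{sc}(\tilde z)/(N\eta)}+(N\eta)^{-1}$. Since $\supp\tilde\rho_\infty=[-\tilde L,\tilde L]$ with $\tilde L=L+\cX+\O_\prec(N^{-1/2}q^{-3})$ and $L-2\asymp q^{-2}$, and since both $m_{sc}$ at $2$ and $\tilde m_\infty$ at $\tilde L$ have square-root edge behaviour (Proposition~\ref{p:tminfty}), a standard stability argument for the self-consistent equation gives $|m_{sc}(\tilde z)-\tilde m_\infty(\tilde z)|\lesssim q^{-1}$. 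Combined with the fact that $N\eta\geq N^{\fa/2}$ on $\cD$ (forced by the constraint $|\kappa|+\eta\geq N^\fa(N\eta)^{-2}$), this produces a deterministic initial control $|m_N(\tilde z)-\tilde m_\infty(\tilde z)|\prec\Lambda_0:=q^{-1}+N^{-\fa/4}\ll1$ uniformly on $\cD$.

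\textbf{Bootstrap.} Suppose inductively that $|m_N(\tilde z)-\tilde m_\infty(\tilde z)|\prec\Lambda(z)$ on $\cD$ for a deterministic control with $\Lambda\leq\varepsilon$. Taylor expanding $P(\tilde z,\cdot)$ about $\tilde m_\infty(\tilde z)$ and invoking \eqref{eqn:imtilminf} and \eqref{e:derPest}, I get the deterministic bounds $\Im[m_N(\tilde z)]\prec\sqrt{|\kappa|+\eta}+\Lambda=:\Lambda_\ast$ and $|\del_2 P(\tilde z,m_N(\tilde z))|\prec\Lambda_\ast$. Substituting these into the right-hand side of Proposition~\ref{t:Pmoment} (splitting expectations over the relevant overwhelming-probability event and using the crude bounds $|m_N|\le\eta^{-1}$ on the complement), then applying Young's inequality to decouple and absorb the factor $|P|^{2r-s_1-s_2}$, I obtain $\bE[|P(\tilde z,m_N(\tilde z))|^{2r}]\prec\big(\Lambda_\ast(\tfrac{1}{q^3N\eta}+\tfrac{1}{(N\eta)^2})^{1/2}\big)^{2r}$, hence by Markov $|P(\tilde z,m_N(\tilde z))|\prec\delta(z):=\Lambda_\ast(\tfrac{1}{q^3N\eta}+\tfrac{1}{(N\eta)^2})^{1/2}$. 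By the very definition of $\cD$ both $q^{-3}(N\eta)^{-1}$ and $(N\eta)^{-2}$ are $\leq N^{-\fa}(|\kappa|+\eta)$, so $\delta\prec N^{-\fa/2}\sqrt{|\kappa|+\eta}\,\Lambda_\ast$. Feeding $\delta$ into Proposition~\ref{p:stable} gives $|m_N(\tilde z)-\tilde m_\infty(\tilde z)|\prec\delta/\sqrt{|\kappa|+\eta+\delta}$, which in the regime $\Lambda\geq\sqrt{|\kappa|+\eta}$ (so $\Lambda_\ast\leq2\Lambda$, $(|\kappa|+\eta)^{1/4}\leq\Lambda^{1/2}$) is $\prec N^{-\fa/4}\Lambda$ regardless of whether $\delta\lessgtr|\kappa|+\eta$. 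Thus each round improves the control by a factor $N^{-\fa/4}$; starting from $\Lambda_0\ll1$ and noting $\sqrt{|\kappa|+\eta}\geq N^{-1+\fa/2}$ on $\cD$, after a bounded number ($\lesssim1/\fa$) of rounds the control reaches $\Lambda\lesssim\sqrt{|\kappa|+\eta}$, which is the claim. (Running the recursion once more from there in fact yields the stronger $|m_N(\tilde z)-\tilde m_\infty(\tilde z)|\prec N^{-\fa/2}\sqrt{|\kappa|+\eta}$, which is what is actually needed for Theorem~\ref{thm:edgerigidity}.) Throughout, Proposition~\ref{p:stable} is applied with deterministic, Lipschitz, $\eta$-monotone majorants of the control functions, and the moment/Markov bounds are first established on an $N^{-10}$-net of $\cD$ and extended to all of $\cD$ via the Lipschitz continuity of $m_N$ in $z$ (cf.\ \eqref{e:derivativeofm}) together with a union bound.

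\textbf{Main obstacle.} The substantive analytic inputs---the recursive moment estimate and the stability estimate---are already in hand, so the real work is bookkeeping: one must verify that the factor $(\tfrac{1}{q^3N\eta}+\tfrac{1}{(N\eta)^2})^{1/2}$ coming out of Proposition~\ref{t:Pmoment} is genuinely a negative power of $N$ times $\sqrt{|\kappa|+\eta}$ throughout $\cD$, and, near the edge $\kappa=0$ where $\del_2P$ degenerates like $\sqrt{|\kappa|+\eta}$, that one uses the full quadratic relation $\cR(\tilde z)=a(\tilde z)(m_N-\tilde m_\infty)+b(\tilde z)(m_N-\tilde m_\infty)^2$ of Proposition~\ref{p:stable} rather than its linearization. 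This is precisely the purpose of the lower bound defining $\cD$ and of the square-root edge structure of $\tilde\rho_\infty$ recorded in Proposition~\ref{p:tminfty}.
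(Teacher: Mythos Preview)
Your proposal is correct and uses the same two inputs as the paper---the recursive moment estimate (Proposition~\ref{t:Pmoment}) and the quadratic stability relation---but the execution is organized differently. You run an iterative bootstrap: seed with the entrywise local law, convert the moment bound on $P$ to an overwhelming-probability bound via Markov, feed this into the packaged stability Proposition~\ref{p:stable}, and repeat until the control reaches $\sqrt{|\kappa|+\eta}$. The paper instead closes in a single moment calculation: it uses the Taylor expansion directly to get $\Lambda_N^2\lesssim\sqrt{|\kappa|+\eta}\,\Lambda_N+|P|$, raises this to the $2r$-th power and takes expectation to obtain $\bE[\Lambda_N^{4r}]\lesssim(|\kappa|+\eta)^r\bE[\Lambda_N^{2r}]+\bE[|P|^{2r}]$, substitutes the moment bound \eqref{e:momentbound}, and then uses the defining inequalities of $\cD$ (namely $N\eta\sqrt{|\kappa|+\eta}\gg1$ and $N\eta(|\kappa|+\eta)q^3\gg1$) together with H\"older/Young to conclude $\bE[\Lambda_N^{4r}]\prec(|\kappa|+\eta)^{2r}$ in one stroke. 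The paper's route is shorter and avoids both the explicit iteration and the appeal to Proposition~\ref{p:stable} (in particular the continuity argument hidden there); your route is more modular and follows the standard ``weak bound $\to$ stability $\to$ iterate'' template. Either works, and your bookkeeping (the $N^{-\fa/4}$ gain per round, the termination count, the deterministic Lipschitz/monotone majorants, the net argument) is sound.
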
 
\begin{proof}
For $|\kappa|+\eta\gg 1/q^3\sqrt{N}$, and $\kappa\leq 1$, by Proposition \ref{p:tminfty}, we have
\begin{align}
\Im[\tilde m_{\infty}(\tilde z)]\asymp \Phi(z)\deq \left\{
\begin{array}{cc}
\sqrt{|\kappa|+\eta}, & \kappa\leq 0,\\
\eta/\sqrt{|\kappa|+\eta}, & \kappa\geq 0.
\end{array}
\right.
\end{align}
and 
\begin{align}
|\del_2 P(\tilde z, \tilde m_{\infty}(\tilde z))|\asymp \sqrt{|\kappa|+\eta}.
\end{align}
We denote 
\begin{align}
\Lambda_N(z)\deq |m_N(L+\cX+z)-\tilde m_{\infty}(L+\cX+z)|.
\end{align}
Then we have
\begin{align}
\Im[m_N(\tilde z)]\lesssim \Phi(z)+\Lambda_N(z),
\end{align}
and by Proposition \ref{p:tminfty}
\begin{align}
\del_2 P(\tilde z, m_N(\tilde z))
=\del_2 P(\tilde z, \tilde m_\infty(\tilde z))+\O(|m_N(\tilde z)-\tilde m_\infty(\tilde z)|)
=\O(\sqrt{|\kappa|+\eta}+\Lambda_N(z)).
\end{align}
By H{\" o}lder's inequality we obtain from Proposition \ref{t:Pmoment}, 
\begin{align}\label{e:momentbound}
\bE[|P(\tilde z, m_N(\tilde z))|^{2r}]
\prec\frac{1}{(N\eta)^r}\left(\frac{1}{q^3}+\frac{1}{N\eta}\right)^r\bE\left[\Lambda_N(z)^{2r}+(|\kappa|+\eta)^{r/2}\left(\Phi(z)^r+\Lambda_N(z)^r\right)\right].
\end{align}
With overwhelming probability we have the following Taylor expansion,
\begin{align}
\nonumber P(\tilde z, m_N(\tilde z))
&=P(\tilde z, \tilde m_\infty(\tilde z))
+\del_2 P(\tilde z, \tilde m_\infty(\tilde z))(m_N(\tilde z)-\tilde m_\infty(\tilde z))+\frac{\del_2^2 P(\tilde z, \tilde m_\infty(\tilde z))+\oo(1)}{2}(m_N(\tilde z)-\tilde m_\infty(\tilde z))^2\\
&=\del_2 P(\tilde z, \tilde m_\infty(\tilde z))(m_N(\tilde z)-\tilde m_\infty(\tilde z))+(1+\oo(1))(m_N(\tilde z)-\tilde m_\infty(\tilde z))^2,\label{e:taylorexpand}\end{align}
where we used that $\del_2^2 P(\tilde z, \tilde m_\infty(\tilde z))=2+\O(1/q^2)$ and $\Lambda_N(z)\ll1$ with overwhelming probability.   Rearranging the last equation and using the definition of $\Lambda_N(z)$, we have arrived at 
\begin{align}\label{e:Labda2bound}
\Lambda_N(z)^2\lesssim\Lambda_N(z)\sqrt{|\kappa|+\eta}+ |P(\tilde z,m_N(\tilde z))|, 
\end{align}
and thus 
\begin{align}\label{e:Lambda4r}
\bE[\Lambda_N(z)^{4r}]\lesssim(|\kappa|+\eta)^r\bE[\Lambda_N(z)^{2r}]+ \bE[|P(\tilde z,m_N(\tilde z))|^{2r}].
\end{align}
We replace $\bE[|P(\tilde z,m_N(\tilde z))|^{2r}]$ in \eqref{e:Lambda4r} by \eqref{e:momentbound}. Moreover, on the domain $\cD$, we have $N\eta\sqrt{|\kappa|+\eta}\gg1$ and $N\eta(|\kappa|+\eta)q^3\gg1$, and so,
\begin{align}
\bE[\Lambda_N(z)^{4r}]\prec (|\kappa|+\eta)^{2r},
\end{align}
and by Markov's inequality we get $\Lambda_N(z)\prec \sqrt{|\kappa|+\eta}$.

\end{proof}

\begin{proof}[Proof of Theorem \ref{thm:edgerigidity}]

We assume that there exists some deterministic control parameter $\Lambda(z)$ such that the prior estimate holds
\begin{align}
|m_N(L+\cX+z)-\tilde m_{\infty}(L+\cX+z)|\prec \Lambda(z)\lesssim \sqrt{|\kappa|+\eta}.
\end{align}
%
Since $\Phi(z)\gtrsim\sqrt{|\kappa|+\eta}$ and $\Lambda(z)\prec \sqrt{|\kappa|+\eta}$, \eqref{e:momentbound}  simplifies to
\begin{align}\label{e:estimateP}
|P(\tilde z, m_N(\tilde z))|
&\prec \left(\frac{1}{q^{3/2}\sqrt{N\eta}}+\frac{1}{N\eta}\right)\left((\Lambda(z)+\Phi(z))\sqrt{|\kappa|+\eta}\right)^{1/2}.
\end{align}

If $\kappa\geq 0$, then $\Phi(z)=\eta/ \sqrt{|\kappa|+\eta}$, and \eqref{e:estimateP} simplifies to
\begin{align}\label{e:boundoutside}
|P(\tilde z, m_N(\tilde z))|
\prec \frac{1}{N\eta^{1/2}}+\frac{1}{N^{1/2}q^{3/2}}+\frac{(|\kappa|+\eta)^{1/4}\Lambda^{1/2}}{N\eta}+\frac{(|\kappa|+\eta)^{1/4}\Lambda^{1/2}}{q^{3/2}(N\eta)^{1/2}}.
\end{align}
Thanks to Proposition \ref{p:stable}, by taking $\delta(z)$ the righthand side of \eqref{e:boundoutside} times $N^{\fc}$, we have
\begin{align}\begin{split}\label{e:outS0}
|m_N(\tilde z)-\tilde m_\infty(\tilde z)|\prec \frac{1}{\sqrt{|\kappa|+\eta}}\left( \frac{1}{N\eta^{1/2}}+\frac{1}{N^{1/2}q^{3/2}}+\frac{(|\kappa|+\eta)^{1/4}\Lambda^{1/2}}{N\eta}
+\frac{(|\kappa|+\eta)^{1/4}\Lambda^{1/2}}{q^{3/2}(N\eta)^{1/2}}\right).
\end{split}\end{align}
By iterating \eqref{e:outS0}, we get
\begin{align}\label{e:outS}
|m_N(\tilde z)-\tilde m_\infty(\tilde z)|\prec \frac{1}{\sqrt{|\kappa|+\eta}}\left(\frac{1}{N\eta^{1/2}}+\frac{1}{N^{1/2}q^{3/2}}+\frac{1}{(N\eta)^2}+\frac{1}{q^3 N\eta}\right).
\end{align}
This finishes the proof of \eqref{e:stateoutS}.

If $\kappa\leq 0$, then $\Phi(z)=\sqrt{|\kappa|+\eta}$ and $\Lambda(z)\prec \sqrt{|\kappa|+\eta}$, \eqref{e:estimateP} simplifies to
\begin{align}\label{e:boundinside}
|P(\tilde z,m_N(\tilde z)|
\prec \frac{(|\kappa|+\eta)^{1/2}}{N\eta}+\frac{(|\kappa|+\eta)^{1/2}}{(N\eta)^{1/2}q^{3/2}}.
\end{align}
It follows from Proposition \ref{p:stable}, by taking $\delta(z)$ the righthand side of \eqref{e:boundinside} times $N^\fc$, we have
\begin{align}\label{e:inS}
|m_N(\tilde z)-\tilde m_\infty(\tilde z)|\prec \frac{1}{N\eta}+\frac{1}{(N\eta)^{1/2}q^{3/2}}.
\end{align}
This finishes the proof of \eqref{e:stateinS}.
\end{proof}

\section{Edge statistics of $H(t)$} \label{sec:ht}
Let $H$ be as in Definition \ref{asup}. In this section we assume that $q\gg N^{1/9}$, and consider the ensemble
\beq\label{e:Ht}
H(t) := \e^{ - t/2}H + \left( 1 - \e^{ -t} \right)^{1/2} W,
\eeq
where $H(0)=H$ and $W$ is an independent GOE matrix.  The random matrix $H(t)$ also satisfies the properties of Definition \ref{asup}. Thanks to Proposition \ref{p:minfty} and \ref{p:tminfty}, we can construct a random probability measure for $H(t)$, which is supported on $[-\tilde L_t, \tilde L_t]$, where
\begin{align}\label{e:defLt}
\tilde L_t =L_t+\cX_t+\O_{\prec}\left(\frac{1}{\sqrt{N}q^3}\right), \quad \cX_t=\frac{1}{N}\sum_{ij}\left(h_{ij}^2(t)-\frac{1}{N}\right).
\end{align}
%

We take $t\gg N^{-1/3}$ as
\beq\label{e:deft}
t = \frac{N^{\fd}}{N^{1/3}},
\eeq
for a $1/3 > \fd >0$ to be determined.
We denote $\cA$ to be the set of sparse random matrices $H$, such that \eqref{e:stateoutS} and \eqref{e:stateinS} hold at edges $\pm \tilde L$:
\begin{align}\label{e:defA}
\cA\deq \{H: \text{\eqref{e:stateoutS} and \eqref{e:stateinS} hold around edges $\pm (L+\cX)$} \},
\end{align}
By Theorem \ref{thm:edgerigidity}, we know that the event holds with $\cA$ holds with probability $\bP(\cA)\geq 1-N^{-D}$ for any $D\geq 0$. 
We denote the empirical eigenvalue distribution of $H$ by $\mu_{H}$. We denote the eigenvalues of $H(t)$ as $\la_1(t), \la_2(t),\cdots, \la_N(t)$.

In this section, we prove the following theorem, which states that the fluctuations of extreme eigenvalues of $H(t)$ are given by a combination of the Tracy-Widom distribution and the Gaussian distribution. 
\begin{theorem} \label{thm:htedge}
Let $H$ be as in Definition \ref{asup} with $q\gg N^{1/9}$. Let $H(t)$ be as in \eqref{e:Ht}, with eigenvalues denoted by $\la_1(t),\la_2(t),\cdots,\lambda_N(t)$, and $t=N^{-1/3+\fd}$.  Let $k\geq 1$ and $F : \rr^k \to \rr$ be a bounded test function with bounded derivatives.  There is a universal constant $\fc>0$ depending on $\fd$, so that for any  $H\in \cA$ as defined in \eqref{e:defA} we have
\begin{align}\begin{split}\label{eqn:htedge2}
&\phantom{{}={}} \ee_{W}[ F (N^{2/3} ( \lambda_1(t) - L_t - \cX ), \cdots , N^{2/3} ( \lambda_k(t) - L_t - \cX ) | H] \\
&= \ee_{GOE}[ F (N^{2/3} ( \mu_1 - 2), \cdots, N^{2/3} ( \mu_k - 2  ) ) +\O\left(N^{-\fc}\right),
\end{split}\end{align}
where the expectation on the righthand side is with respect to a GOE matrix with eigenvalues denoted by $\mu_i$. 
Moreover, one can also put the $\cX$ on the righthand side of \eqref{eqn:htedge2},
\begin{align}\begin{split} \label{eqn:htedge1}
&\phantom{{}={}}\ee_{H, W}[ F (N^{2/3} ( \lambda_1(t) - L_t ), \cdots , N^{2/3} ( \lambda_k (t)- L_t ) ]\\
& = \ee_{H,GOE}[ F (N^{2/3} ( \mu_1 - 2 + \cX), \cdots, N^{2/3} ( \mu_k - 2 + \cX ) ) ]+\O\left( N^{-\fc}\right),
\end{split}\end{align}
where expectation on the righthand is with respect to a GOE matrix with eigenvalues $\mu_i$, and a sparse random matrix $H$ (note that on the RHS, $H$ is independent of the GOE and only enters the expecation through $\cX$).
\end{theorem}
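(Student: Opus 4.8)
The plan is to view $H(t)$ as the time-$t$ output of the Ornstein--Uhlenbeck form of Dyson Brownian motion started from $H$, and to feed the edge rigidity of Theorem~\ref{thm:edgerigidity} into the edge universality result for Gaussian divisible ensembles recalled in Section~\ref{sec:ht} from \cite{LY}. I would condition on $H$ throughout. For $H\in\cA$ the eigenvalues of $H$ near $\pm(L+\cX)$ obey a local law and rigidity with respect to the random measure $\tilde\rho_\infty$ on the domain $\cD$, and by Proposition~\ref{p:tminfty} this measure is supported on $[-\tilde L,\tilde L]$ with $\tilde L=L+\cX+\O_\prec(N^{-1/2}q^{-3})$ and has a square-root edge with coefficient of order one. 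Since $q\gg N^{1/9}$ forces $q^{-6}\ll N^{-2/3}$, the eigenvalues near $\pm(L+\cX)$ are in fact rigid on a scale $N^{-2/3+\fc'}$ for arbitrarily small $\fc'$; all of these statements hold with constants uniform over $H\in\cA$.

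The first step is to invoke \cite{LY}: its hypotheses on the initial data (regularity and square-root behavior of the limiting edge measure, together with rigidity down to the relevant scale near the edge) are precisely what the previous paragraph supplies, uniformly over $H\in\cA$, once $t=N^{-1/3+\fd}$ is chosen with $\fd$ slightly larger than $\fc'$ so that the smoothing of DBM over time $t$ absorbs the $N^{\fc'}$ loss in the initial rigidity. This yields, conditionally on $H\in\cA$ and with error uniform over such $H$,
\begin{align*}
\ee_{W}\!\left[F\!\left(N^{2/3}(\lambda_1(t)-\tilde L_t),\ldots,N^{2/3}(\lambda_k(t)-\tilde L_t)\right)\mid H\right]
=\ee_{GOE}\!\left[F\!\left(N^{2/3}(\mu_1-2),\ldots,N^{2/3}(\mu_k-2)\right)\right]+\O(N^{-\fc}),
\end{align*}
where $\tilde L_t$ is the right edge of the random measure attached to $H(t)$ in \eqref{e:defLt}. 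I expect the main obstacle here to be the bookkeeping needed to check that the random, $N$-dependent measure $\tilde\rho_\infty$ literally meets the regularity assumptions of \cite{LY} with uniform constants, together with the balancing of the exponents $\fc,\fc',\fd$.

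The second step is to replace the centering $\tilde L_t$ by $L_t+\cX$. Expanding $h_{ij}(t)=\e^{-t/2}h_{ij}+(1-\e^{-t})^{1/2}w_{ij}$ gives
\begin{align*}
\cX_t=\e^{-t}\cX+2\e^{-t/2}(1-\e^{-t})^{1/2}\frac1N\sum_{ij}h_{ij}w_{ij}+(1-\e^{-t})\frac1N\sum_{ij}\Big(w_{ij}^2-\frac1N\Big),
\end{align*}
and a second-moment estimate shows the last two terms are each $\O_\prec(t^{1/2}/N)$ while $(\e^{-t}-1)\cX=\O_\prec(t/(\sqrt Nq))$; combined with $\tilde L_t=L_t+\cX_t+\O_\prec(N^{-1/2}q^{-3})$ from \eqref{e:defLt} and $q\gg N^{1/9}$, this gives $N^{2/3}\lvert\tilde L_t-(L_t+\cX)\rvert=\O_\prec(N^{-\fc})$ provided $\fd$ is small enough. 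Since $F$ has bounded derivatives, translating its argument by this amount costs only $\O(N^{-\fc})$ in expectation, which yields \eqref{eqn:htedge2}.

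Finally, \eqref{eqn:htedge1} follows from \eqref{eqn:htedge2} by a deterministic change of variables. Conditionally on $H$, write $F(N^{2/3}(\lambda_i(t)-L_t))=\tilde F_H(N^{2/3}(\lambda_i(t)-L_t-\cX))$ with $\tilde F_H(y_1,\ldots,y_k)=F(y_1+N^{2/3}\cX,\ldots,y_k+N^{2/3}\cX)$, which is bounded with bounded derivatives uniformly in $H$; applying \eqref{eqn:htedge2} (with error uniform over $H\in\cA$) and undoing the substitution identifies the right-hand side, still conditionally on $H$, with $\ee_{GOE}[F(N^{2/3}(\mu_1-2+\cX),\ldots,N^{2/3}(\mu_k-2+\cX))]$. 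Taking $\ee_H$ over $H\in\cA$ and bounding the contribution of $\cA^{c}$ by $\lVert F\rVert_\infty\,\bP(\cA^{c})\le N^{-D}$ (Theorem~\ref{thm:edgerigidity}) completes the proof.
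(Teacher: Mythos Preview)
Your overall architecture matches the paper's: verify that $H\in\cA$ satisfies the $\eta_*$-regularity hypotheses of \cite{LY}, apply that result, then massage the centering, and finally pass from \eqref{eqn:htedge2} to \eqref{eqn:htedge1} by the substitution $\tilde F_H(y)=F(y+N^{2/3}\cX)$. Your third step and the treatment of $\cA^c$ are exactly what the paper does.

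The gap is in your first step. The output of \cite{LY} is \emph{not} centered at $\tilde L_t$; it is
\[
\ee_W\bigl[F\bigl(N^{2/3}\hat\gamma_0(\lambda_1(t)-\hat E_+),\ldots\bigr)\mid H\bigr]
=\ee_{GOE}\bigl[F\bigl(N^{2/3}(\mu_1-2),\ldots\bigr)\bigr]+\O(N^{-\fc}),
\]
where $\hat E_+$ and $\hat\gamma_0$ are the edge and scaling coefficient of the free convolution of the \emph{empirical} measure $\mu_H$ of $H$ (not $H(t)$) with the time-$t$ semicircle. These quantities are functions of $H$ alone, whereas your $\tilde L_t$ depends on $H(t)$ and hence on $W$ through $\cX_t$; so as written your displayed identity is not what \cite{LY} provides. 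More importantly, there is no a priori reason the free-convolution edge $\hat E_+$ should equal $\tilde L_t$ or $L_t+\cX$ to accuracy $o(N^{-2/3})$: this is the content of the two free-convolution computations the paper carries out (Propositions~\ref{lem:freeconv1} and~\ref{lem:freeconv2}), which show first that $\hat E_+=\tilde E_++\O_\prec(N^{-1/2}q^{-3/2}+N^{-1}t^{-1})$ and $\hat\gamma_0=1+\O_\prec(N^{-\fc})$ by comparing the free convolution of $\mu_H$ to that of $\tilde\rho_\infty$, and then that $\tilde E_+=L_t+\cX+\O_\prec(N^{-2/3-\fc})$ by an explicit expansion of the critical point of the self-consistent polynomial. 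These calculations are the substance of Section~\ref{sec:ht} and Appendix~\ref{a:fc}; your proposal skips them entirely.

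Your second step (showing $N^{2/3}|\tilde L_t-(L_t+\cX)|\prec N^{-\fc}$ via the expansion of $\cX_t$) is correct on its own, and is in fact part of what Proposition~\ref{lem:freeconv2} establishes, but it does not bridge the gap: you still need to connect $\hat E_+$ to either $\tilde L_t$ or $L_t+\cX$, and that is the free-convolution work you have omitted. The ``bookkeeping'' you anticipated as the main obstacle (verifying the hypotheses of \cite{LY}) is in fact routine; the real labor is identifying the \emph{output} parameters of \cite{LY}.
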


Take $\eta_*=N^{-2/3+\fd/2}$, and $\tilde z=L+\cX+z$, where $z=\kappa+\ri\eta$. For any $H\in \cA$, from the defining relations of $\cA$, i.e. \eqref{e:stateoutS} and \eqref{e:stateinS}, we have 
\begin{align}
|m_N(\tilde z)-\tilde m_{\infty}(\tilde z)|\ll\frac{\eta}{\sqrt{\kappa+\eta}},
\end{align}
for $0\leq \kappa\leq 1$ and $\eta_*\leq \eta\leq \fb$, and 
\begin{align}
|m_N(\tilde z)-\tilde m_{\infty}(\tilde z)|\ll\sqrt{|\kappa|+\eta},
\end{align}
for $-1\leq \kappa\leq 0$ and $\eta_*\leq \eta\leq \fb$.
Hence, combining with estimates \eqref{eqn:imtilminf}, we get
\begin{align}
\Im [ m_N ( \tilde z) ] \asymp \Im [ \tilde{m}_\infty(\tilde z)], \quad \eta_*\leq \eta \leq \fb,\quad |\kappa|\leq 1,
\end{align}
for $H\in \cA$. Moreover, in the regime $q\gg N^{-1/9}$, \eqref{e:stateoutS} also implies \eqref{e:largeeig} such that $\lambda_1 (0) - \tilde{L}  \leq N^{-2/3+\fd/2}$.   Hence, $H$ is $\eta*$-regular in the sense of \cite[Definition 2.1]{LY}, and the result of \cite[Theorem 2.2]{LY} applies for $t=N^{-1/3+\fd}$ as above. This result gives the limiting distribution of the extreme eigenvalues of $H_t$.  The result involves scaling parameters coming from the free convolution, which we must now define.

We denote $\rhosc(x)$ the semicircle law which is the limit eigenvalue density of a Gaussian orthogonal ensemble $W$. The semicircle law at time $t$, i.e. the limit eigenvalue density of $(1-e^{-t})^{1/2}W$, is given by $(1-e^{-t})^{-1/2}\rhosc((1-e^{-t})^{-1/2}x)\rd x$. We denote the free convolution of the empirical eigenvalue density of $e^{-t/2}H$, with the semicircle law at time $t$ by $\hatmfc$, and the free convolution of $e^{-t/2}\tilde \rho_\infty(e^{t/2}x)$ (as defined in Proposition \ref{p:tminfty}) by $\tilmfc$.  They satisfy the functional equations
\beq
\tilmfc(z) = \int \frac{ \tilrhoinf (x)\rd x}{ \e^{-t/2} x - \tilxi (z)}, \qquad \hatmfc(z) = \int \frac{ \d \mu_H (x) }{ \e^{-t/2} x - \hatxi(z) },
\eeq
where
\beq
\tilxi (z) := z + (1-\e^{-t} ) \tilmfc (z), \qquad \hatxi (z):= z + (1-\e^{-t} ) \hatmfc (z).
\eeq
For $t=N^{-1/3+\fd}$, these measures have densities which are supported on a single interval, with square root behaviour at the edges.  The edges are defined as follows.  Let $\tilxip$ and $\hatxip$ be the largest real solutions to 
\beq\label{e:defxi}
1 = (1 - \e^{-t} ) \int \frac{  \tilrhoinf (x)\rd x } { ( \e^{-t/2} x - \tilxip )^2 },\quad 
1 = (1 - \e^{-t} ) \int \frac{  \rd \mu_{H}(x) } { ( \e^{-t/2} x - \hatxip )^2 },
\eeq
Then the edges $\tilEp$ and $\hatEp$ of the free convolution with Stieltjes transforms $\tilmfc(z)$ and $\hatmfc(z)$ are defined by $\tilxip = \tilxi ( \tilEp)$, $\hatxip=\hatxi(\hatEp)$ respectively.   We introduce the scaling parameters $\hatg$ and $\tilg$ as follows.
\beq\tilg := \left(  -(1-\e^{-t} )^3 \int \frac{ \tilrhoinf  (x) \d x}{ (\e^{-t/2} - \tilxip )^3 } \right)^{-1/3},\quad 
\hatg := \left(  -(1-\e^{-t} )^3 \int \frac{ \d \mu_H (x)}{ (\e^{-t/2} - \hatxip )^3 } \right)^{-1/3}.
\eeq
The main result of \cite[Theorem 2.2]{LY} states that for any $\eta_*$-regular $H$, and smooth test function $F : \rr^k \mapsto \rr$, there exists a universal constant $\fc>0$ depending only on $\fd >0$ as above, such that 
\begin{align}\begin{split}\label{e:htedge3}
&\phantom{{}={}} \ee_W[ F ( N^{2/3} \hatgam ( \lambda_1(t) - \hatEp ), \cdots N^{2/3} \hatgam ( \lambda_k(t) - \hatEp ) ) | H ] \\
&= \ee_{GOE} [ F (N^{2/3} ( \mu_1 - 2 ), \cdots N^{2/3} ( \mu_k - 2 ) ) ]+\O\left( N^{-\fc}\right),
\end{split}\end{align}
where the expectation on the righthand side is with respect to a Gaussian orthogonal ensemble with largest few eigenvalues $\mu_1, \mu_2,\cdots,\mu_k$.
 The parameters $\hatgam$ and $\hatEp$ depend on $H$.  The following two propositions show that they are close to $\tilg$ and $\tilEp$, which we can calculate explicitly.  The proofs are deferred to Appendix \ref{a:fc}.

\begin{proposition} \label{lem:freeconv1}
Under the assumptions of Theorem \ref{thm:htedge}, we have,
\beq
\e^{t/2} \hatxip - \tilL \asymp t^2, \qquad \e^{t/2} \tilxip - \tilL \asymp t^2,
\eeq
and the estimate
\beq \label{eqn:xiest}
| \hatxip- \tilxip | \prec t^2 \left( \frac{1}{N^{1/2}  q^{3/2}t^2 } + \frac{1}{ N t^3} \right).
\eeq
For the edges of the free convolution law we have
\beq \label{eqn:edgeest}
| \hatEp - \tilEp | \prec \frac{1}{ N^{1/2} q^{3/2} } + \frac{1}{ N t}.
\eeq
For the scaling parameter we have
\beq \label{eqn:scalest}
|\hatgam-\tilg|=(1-\e^{-t} )^3\e^{3t/2} \left|  \int \frac{ \d \mu_H(x) } { (x - \e^{t/2} \hatxip )^3} -  \int \frac{ \tilrhoinf (x)\rd x }{ (x - \e^{t/2} \tilxip )^3 } \right| \prec \frac{1}{ N t^3} + \frac{1}{ N^{1/2}  q^{3/2} t^2}.
\eeq
\end{proposition}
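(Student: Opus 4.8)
\textbf{Proof strategy for Proposition \ref{lem:freeconv1}.}
The plan is to rewrite each of the six quantities as an explicit functional of the Stieltjes transform of the relevant measure --- $m_N$ (the Stieltjes transform of $\mu_H$) for the ``hat'' objects and $\tilde m_\infty$ (that of $\tilrhoinf$) for the ``tilde'' objects --- evaluated, together with its first two derivatives, at a real point lying a distance $\asymp t^2$ to the right of $\tilL$; the comparison estimates then come from the rigidity bound \eqref{e:stateoutS} combined with Cauchy's integral formula, while the sizes $\asymp t^2$ and the blow-up rates of the derivatives come from the square-root edge of $\tilrhoinf$ recorded in Proposition \ref{p:tminfty}. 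Concretely, for a probability measure $\nu$ supported in $[-\tilL,\tilL]$ with Stieltjes transform $m$ one has $\int (\e^{-t/2}x - w)^{-k}\,\nu(\rd x) = c_k\,\e^{kt/2}\,m^{(k-1)}(\e^{t/2}w)$ with $c_k=1/(k-1)!$; applying this with $\nu=\mu_H$ and $\nu=\tilrhoinf$ turns \eqref{e:defxi} into $(\e^t-1)\,m_N'(\e^{t/2}\hatxip)=1=(\e^t-1)\,\tilde m_\infty'(\e^{t/2}\tilxip)$, the edge relations $\tilxip=\tilxi(\tilEp)$, $\hatxip=\hatxi(\hatEp)$ become the closed forms $\hatEp=\hatxip-(1-\e^{-t})\e^{t/2}m_N(\e^{t/2}\hatxip)$ (and likewise for $\tilEp$), and $\hatg^{-3},\tilg^{-3}$ equal, up to the explicit prefactor $-\tfrac12(1-\e^{-t})^3\e^{3t/2}$, the values $m_N''(\e^{t/2}\hatxip)$ and $\tilde m_\infty''(\e^{t/2}\tilxip)$.

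First I would record the deterministic near-edge estimates for the tilde objects. Since $\tilrhoinf$ has a square-root edge at $\tilL$, for real $w$ with $w-\tilL\asymp t^2$ one has $|\tilde m_\infty^{(k)}(w)|\asymp (w-\tilL)^{1/2-k}\asymp t^{1-2k}$ and $\int\tilrhoinf(x)(x-w)^{-k}\rd x\asymp t^{1-2k}$ for $k\geq2$. In particular the relation $\tilde m_\infty'(\e^{t/2}\tilxip)=(\e^t-1)^{-1}\asymp t^{-1}$ together with $\tilde m_\infty'(w)\asymp(w-\tilL)^{-1/2}$ forces $\e^{t/2}\tilxip-\tilL\asymp t^2$; the same argument, now using $m_N'\approx\tilde m_\infty'$ near the edge (from Theorem \ref{thm:edgerigidity}, or its weak precursor) and a short bootstrap to locate $\e^{t/2}\hatxip$, gives $\e^{t/2}\hatxip-\tilL\asymp t^2$. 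It also shows $\tilg^{-3}\asymp1$ (the $t^3$ prefactor cancels the $t^{-3}$ from $\int\tilrhoinf(x)(x-w)^{-3}\rd x$), hence $\hatg^{-3}\asymp1$ as well, so that $|\hatg-\tilg|\asymp|\hatg^{-3}-\tilg^{-3}|$.

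Next come the three comparison bounds. Fix $w_0$ real with $w_0-\tilL\asymp t^2$; on $\cA$ all eigenvalues of $H$ lie below $\tilL+N^{\fc}(q^{-6}+N^{-2/3})\ll\tilL+t^2$, so $m_N-\tilde m_\infty$ is analytic near $w_0$, and Cauchy's formula on the circle $\{|\zeta-w_0|=\tfrac14(w_0-\tilL)\}$ (deformed near the real axis to stay inside $\cD$) converts \eqref{e:stateoutS} at scale $\kappa\asymp\eta\asymp t^2$ --- whose right side reduces to $\Psi:=t^{-1}(N^{-1}t^{-1}+N^{-1/2}q^{-3/2})$ once $(N\eta)^{-2}$ and $q^{-3}(N\eta)^{-1}$ are discarded as subdominant under $q\gg N^{1/9}$, $t\gg N^{-1/3}$ --- into $|m_N^{(j)}(w_0)-\tilde m_\infty^{(j)}(w_0)|\prec t^{-2j}\Psi$. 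Then: (i) for $\hatxip$, insert the $j=1$ bound at $w_0=\e^{t/2}\hatxip$ into $m_N'(\e^{t/2}\hatxip)=\tilde m_\infty'(\e^{t/2}\tilxip)$, Taylor-expand $\tilde m_\infty'$ about $\e^{t/2}\tilxip$ and use $|\tilde m_\infty''|\gtrsim t^{-3}$, getting $|\hatxip-\tilxip|\asymp t^3\cdot t^{-2}\Psi = t\Psi\prec t^2(N^{-1/2}q^{-3/2}t^{-2}+N^{-1}t^{-3})$; (ii) for $\hatEp-\tilEp$, subtract the closed forms and split $m_N(\e^{t/2}\hatxip)-\tilde m_\infty(\e^{t/2}\tilxip)$ into $[m_N-\tilde m_\infty](\e^{t/2}\hatxip)$ (size $\prec\Psi$) plus $\tilde m_\infty(\e^{t/2}\hatxip)-\tilde m_\infty(\e^{t/2}\tilxip)$ (size $\prec t^{-1}|\hatxip-\tilxip|$), then multiply by $1-\e^{-t}\asymp t$, obtaining $|\hatEp-\tilEp|\prec N^{-1/2}q^{-3/2}+N^{-1}t^{-1}$; (iii) for $\hatg-\tilg$, write $\int\frac{\rd\mu_H}{(x-\e^{t/2}\hatxip)^3}-\int\frac{\tilrhoinf\rd x}{(x-\e^{t/2}\tilxip)^3}$ as $\int\frac{\rd\mu_H-\tilrhoinf\rd x}{(x-\e^{t/2}\hatxip)^3}=\tfrac12(m_N''-\tilde m_\infty'')(\e^{t/2}\hatxip)$ (size $\prec t^{-4}\Psi$) plus $\int\tilrhoinf(x)\bigl[(x-\e^{t/2}\hatxip)^{-3}-(x-\e^{t/2}\tilxip)^{-3}\bigr]\rd x$ (bounded by $|\hatxip-\tilxip|$ times $\int\tilrhoinf(x)(x-w^{*})^{-4}\rd x\asymp t^{-5}$), then multiply by the $\asymp t^3$ prefactor to reach $N^{-1}t^{-3}+N^{-1/2}q^{-3/2}t^{-2}$.

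The step requiring the most care, and the main obstacle, is the bookkeeping rather than any single idea: one must track precisely how the auxiliary error terms of \eqref{e:stateoutS} behave along the Cauchy contour (where $\eta$ drops well below $t^2$ near the real axis), verifying via the lower boundary of $\cD$ and the assumptions $q\gg N^{1/9}$, $t=N^{-1/3+\fd}$ that after integration they contribute at most $\prec\Psi$, and one must bootstrap the a priori location of $\e^{t/2}\hatxip$ from a crude estimate into the sharp window $\e^{t/2}\hatxip-\tilL\asymp t^2$ before the Cauchy estimates are applied. The underlying deterministic near-edge analysis of $\tilde m_\infty$ and of the negative moments of $\tilrhoinf$ at scale $t^2$ --- supplied by the square-root behaviour in Proposition \ref{p:tminfty} --- is the conceptual input, and is carried out in Appendix \ref{a:fc}.
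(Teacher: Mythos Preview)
Your overall architecture matches the paper's exactly: locate $\e^{t/2}\tilxip-\tilL$ and $\e^{t/2}\hatxip-\tilL$ at scale $t^2$ from the square-root edge of $\tilrhoinf$, establish a comparison between $\int(x-w)^{-k}\,\d\mu_H$ and $\int(x-w)^{-k}\,\tilrhoinf(x)\,\d x$ for real $w$ with $w-\tilL\asymp t^2$, then subtract the defining relations and factor to obtain \eqref{eqn:xiest}, \eqref{eqn:edgeest}, \eqref{eqn:scalest}. Steps (i)--(iii) in your proposal are essentially the paper's argument verbatim.

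The one genuine methodological difference is in how the moment comparison is obtained. The paper proves the auxiliary estimate \eqref{eqn:hsest} via the Helffer--Sj{\"o}strand formula, whereas you propose Cauchy's integral formula on a deformed circle together with Schwarz reflection. Both yield the same endpoint, but the Cauchy route is more delicate than your sketch suggests. The closed contour must cross the real axis, and on the short connecting segments with $|\eta|<\eta_{\min}$ (where $\eta_{\min}$ is the lower boundary of $\cD$ at $\kappa\asymp t^2$) the rigidity bound \eqref{e:stateoutS} is unavailable; the trivial bound $|m_N-\tilde m_\infty|=\O(1)$ on those segments is \emph{not} good enough, since $\eta_{\min}/t^2\asymp N^{\fa/2-3\fd}$ is much larger than $\Psi$ for small $\fd$. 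One has to supplement with the a priori eigenvalue location $\lambda_1\le\tilL+N^{-2/3+\varepsilon}$ (so $m_N$ is analytic across the axis) and a continuity estimate of the type $|g(w)-g(w+\ri\eta_{\min})|\lesssim\eta_{\min}\sup|g'|\asymp\eta_{\min}/t$, and then check that this extra error is absorbed. This works, but it is precisely the kind of near-axis bookkeeping that Helffer--Sj{\"o}strand is designed to avoid: there the small-$\eta$ region only sees $\Im S$, and the monotonicity of $\eta\mapsto\eta\Im m_N(x+\ri\eta)$ lets one push $\eta$ up to $\eta_*$ at no cost. The paper's route is therefore cleaner, though yours is a legitimate alternative once the crossing segments are handled with the care just described.
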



\begin{proposition} \label{lem:freeconv2}
Under the assumptions of Theorem \ref{thm:htedge}, there exists a universal constant $\fc>0$, such that
\begin{align}\begin{split}\label{e:tEestimate}
\tilEp &= 2 + \cX + \frac{ 6 \mathcal{C}_4}{q^2}(1 - 2 t)  + \frac{ 120 \mathcal{C}_6}{q^4} - \frac{81 \mathcal{C}_4^2}{q^4}+\O_\prec \left(N^{-2/3-\fc}\right). \\
&= L_t + \cX+\O_\prec \left(N^{-2/3-\fc}\right).  \\
&= \tilde{L}_t +\O_\prec \left(N^{-2/3-\fc}\right). 
\end{split}\end{align}
and
\beq\label{e:tgestimate}
\tilg = 1 + \O_\prec \left( N^{-\fc} \right).
\eeq
\end{proposition}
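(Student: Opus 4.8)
The plan is to reduce everything to a perturbative analysis of the algebraic function $\tilde m_\infty$ near its edge $\tilL$, using the explicit form $P(z,m)=1+zm+(1+\cX)m^2+\tfrac{6\cC_4}{q^2}m^4+\tfrac{120\cC_6}{q^4}m^6+\O(q^{-6})$ together with the edge estimates of Proposition \ref{p:tminfty}. \emph{Step 1 (reformulation).} Set $\phi(\xi):=\int\tilrhoinf(x)(\e^{-t/2}x-\xi)^{-1}\,\d x$; the substitution $x\mapsto\e^{-t/2}x$ gives the identity $\phi(\xi)=\e^{t/2}\tilde m_\infty(\e^{t/2}\xi)$. With the rescaled critical point $w:=\e^{t/2}\tilxip$, the defining relation \eqref{e:defxi} becomes $(\e^t-1)\tilde m_\infty'(w)=1$, while
\[
\tilEp=\tilxip-(1-\e^{-t})\phi(\tilxip)=\e^{-t/2}w-(\e^{t/2}-\e^{-t/2})\,\tilde m_\infty(w),
\]
and $\tilg^{-3}=-\tfrac12(\e^{t/2}-\e^{-t/2})^3\,\tilde m_\infty''(w)$. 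Monotonicity of $\phi'$ outside $\supp\tilrhoinf$ shows that $w$ is the largest real solution and lies just to the right of $\tilL$.

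\emph{Step 2 (edge expansion and critical point).} Write $\tilde m_e:=\tilde m_\infty(\tilL)$. By Proposition \ref{p:tminfty} the edge is characterized by $P(\tilL,\tilde m_e)=\del_2 P(\tilL,\tilde m_e)=0$ and $\del_2^2 P(\tilL,\tilde m_e)=2+\O(q^{-2})$; solving the first pair perturbatively in $q^{-2}$ and $\cX$ gives $\tilde m_e=-1+\tfrac{\cX}{2}+\tfrac{9\cC_4}{q^2}+\cdots$, and since $\del_2 P(z,m)=z+2m+(\text{an }\O(\cX+q^{-2})\text{ perturbation})$ one records $\tilL+2\tilde m_e=-2\cX\tilde m_e-\tfrac{24\cC_4}{q^2}\tilde m_e^3-\cdots=2\cX+\tfrac{24\cC_4}{q^2}+\cdots$ (consistently with $\tilL=L+\cX+\O_\prec((\sqrt N q^3)^{-1})$). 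Because $P$ is linear in $z$, Taylor expanding $P(w,\tilde m_\infty(w))=0$ about $(\tilL,\tilde m_e)$ yields the convergent square-root expansion $\tilde m_\infty(w)=\tilde m_e+b\sqrt{w-\tilL}+b_2(w-\tilL)+\O((w-\tilL)^{3/2})$ for $|w-\tilL|\ll1$ (the distance to the other branch points), with $b^2=\tfrac{-2\del_1 P(\tilL,\tilde m_e)}{\del_2^2 P(\tilL,\tilde m_e)}=1-\tfrac{3\cX}{2}-\tfrac{45\cC_4}{q^2}+\cdots$. Inserting $\tilde m_\infty'(w)=\tfrac{b}{2}(w-\tilL)^{-1/2}+b_2+\cdots$ into $(\e^t-1)\tilde m_\infty'(w)=1$ and using $\e^t-1=t+\tfrac{t^2}{2}+\O(t^3)$ gives $w-\tilL=\tfrac{t^2b^2}{4}(1+\O(t))\asymp t^2$ (this also reproves the first estimate of Proposition \ref{lem:freeconv1}).

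\emph{Step 3 (assembly).} Substituting $w-\tilL$ and the expansion of $\tilde m_\infty(w)$ into the formula of Step 1 and collecting powers of $t$ gives
\[
\tilEp=\tilL-\tfrac{t}{2}\bigl(\tilL+2\tilde m_e\bigr)+\tfrac{t^2}{8}\bigl(\tilL-2b^2\bigr)+\O(t^3)+(\text{lower-order terms}).
\]
The crucial point is that $\tfrac{t^2}{8}(\tilL-2b^2)$, a priori of the dangerous size $t^2\gg N^{-2/3}$, is actually $\O_\prec(N^{-2/3-\fc})$: by Step 2 one has $\tilL-2b^2=4\cX+\tfrac{96\cC_4}{q^2}+\cdots$, so this term equals $\tfrac{t^2\cX}{2}+\tfrac{12t^2\cC_4}{q^2}+\cdots$, which is negligible for $q\gg N^{1/9}$ and $\fd$ small. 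Using $\tfrac{t}{2}(\tilL+2\tilde m_e)=t\cX+\tfrac{12t\cC_4}{q^2}+\cdots$ and discarding $t\cX$, $\O(t/q^4)$ and $\O(t^3)$ (all $\O_\prec(N^{-2/3-\fc})$ for $\fd$ small), we get $\tilEp=\tilL-\tfrac{12t\cC_4}{q^2}+\O_\prec(N^{-2/3-\fc})$; with $\tilL=L+\cX+\O_\prec(N^{-2/3-\fc})$ and the expansion of $L$ from Proposition \ref{p:minfty} this is the first line of \eqref{e:tEestimate}. The second line follows since $H(t)$ has cumulants $\cC_2(t)=1$, $\cC_k(t)=\e^{-kt/2}\cC_k$ for $k\ge3$ (Gaussian cumulants of order $\ge3$ vanish), so applying the formula for $L$ to $H(t)$ and expanding $\e^{-2t},\e^{-3t}$ gives $L_t=2+\tfrac{6\cC_4}{q^2}(1-2t)+\tfrac{120\cC_6}{q^4}-\tfrac{81\cC_4^2}{q^4}+\O_\prec(N^{-2/3-\fc})$; the third follows from $\cX_t=\e^{-t}\cX+(1-\e^{-t})\cX^W+2\e^{-t/2}(1-\e^{-t})^{1/2}\tfrac1N\sum_{ij}h_{ij}W_{ij}=\cX+\O_\prec(N^{-2/3-\fc})$, the last two contributions being $\O_\prec(N^{-1})$ up to the $(1-\e^{-t})^{1/2}$ prefactor and $(1-\e^{-t})\cX=\O_\prec(t(\sqrt N q)^{-1})$. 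Finally, inserting $\tilde m_\infty''(w)=-\tfrac{b}{4}(w-\tilL)^{-3/2}(1+\O(t))$ and $w-\tilL=\tfrac{t^2b^2}{4}(1+\O(t))$ into the $\tilg^{-3}$ formula of Step 1, the powers of $t$ cancel and leave $\tilg^{-3}=b^{-2}(1+\O(t))$, hence $\tilg=b^{2/3}(1+\O(t))=1+\O_\prec(N^{-\fc})$ since $b^2=1+\O_\prec(N^{-\fc})$.

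\emph{Main obstacle.} The delicate part is the bookkeeping in Steps 2--3: expanding $\tilde m_\infty$ near its edge to the right order, solving the critical equation to the needed precision in $t$, and verifying that everything except $\tilL$ and $-\tfrac{12t\cC_4}{q^2}$ is $\O_\prec(N^{-2/3-\fc})$ — which forces $\fd$ to be chosen small relative to the exponent $1/9$. The key (non-obvious) simplification is that the $\O(t^2)$ contribution $\tfrac{t^2}{8}(\tilL-2b^2)$ collapses to something much smaller than $t^2$ because $\tilL-2b^2$ is itself $\O(\cX+q^{-2})$; establishing this requires the expansions of both the edge $\tilL$ and the square-root amplitude $b$ to second order in the small parameters, which in turn uses the bound $\del_2^2 P=2+\O(q^{-2})$ from Proposition \ref{p:tminfty}.
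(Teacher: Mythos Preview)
Your proposal is correct and follows essentially the same approach as the paper: both arguments expand $\tilde m_\infty$ near its edge via the self-consistent polynomial, solve the critical equation $(\e^t-1)\tilde m_\infty'(w)=1$ perturbatively to locate $w=\e^{t/2}\tilxip$, substitute into $\tilEp=\e^{-t/2}w-(\e^{t/2}-\e^{-t/2})\tilde m_\infty(w)$, and compare with $L_t$ computed from the cumulants of $H(t)$. The only differences are organizational: the paper packages the edge expansion into a separate Lemma~\ref{l:eloc} using the inverse function $z=F(m)$, and computes $\tilEp$ directly with the error $\O(t^3+t^2q^{-2}+\cdots)$, whereas you work with $P(z,m)$ and first expand $\tilEp$ in powers of $t$ to isolate the coefficient $\tfrac{t^2}{8}(\tilL-2b^2)$; your presentation makes the key point---that the a priori dangerous $\O(t^2)$ term is in fact $\O(t^2(\cX+q^{-2}))$---more visible, but the underlying computation is the same.
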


Thanks to Propositions \ref{lem:freeconv1} and \ref{lem:freeconv2}, we can replace $\hatgam$ and $\hatEp$ in \eqref{e:htedge3} by $L_t+\cX$ and $1$ respectively, which gives an error of size $\O(N^{-\fc})$.
Thus, we have
\begin{align}\begin{split}
&\phantom{{}={}} \ee_W[ F ( N^{2/3} ( \lambda_1(t) - L_t - \cX ), \cdots N^{2/3}  ( \lambda_k(t) - L_t - \cX ) ) | H ]\\
& =\ee_{GOE} [ F (N^{2/3} ( \mu_1 - 2 ), \cdots N^{2/3} ( \mu_k - 2 ) ) ] +\O\left(N^{-\fc}\right).
\end{split}\end{align}
This finishes the proof of \eqref{eqn:htedge2}.

In \eqref{eqn:htedge2}, we can take $F(x) = F_1 (x + N^{2/3} \cX)$ for any smooth function $F_1$, as the estimates only depend on $||F'||_\infty$ and $||F||_\infty$.  Taking expectation over $H$ we then see that
\begin{align}\begin{split}
&\phantom{{}={}}\ee_{H, W}[ F (N^{2/3} ( \lambda_1(t) - L_t ), \cdots , N^{2/3} ( \lambda_k (t)- L_t ) ]\\
& = \ee_{H,GOE}[ F (N^{2/3} ( \mu_1 - 2 + \cX), \cdots, N^{2/3} ( \mu_k - 2 + \cX ) ) ]+\O\left( N^{-\fc}\right).
\end{split}\end{align}
This yields Theorem \ref{thm:htedge}.

\section{Comparison: Proof of Theorem \ref{thm:Tracy-Widom}}\label{s:comparison}

We recall $H(t)$ from \eqref{e:Ht}. For simplicity of notation, in this section we denote the Stieltjes transform of the eigenvalue density of $H(t)$ as $m_t(z)$, and the Stieltjes transform of the eigenvalue density of $H$ as $m(z)$. 
In this section we prove the following theorem, which states that for $t=N^{-1/3+\fd}$, the rescaled extreme eigenvalues of $H$ and $H(t)$  have the same distribution. 
\begin{theorem}\label{thm:comp}
Let $H$ be as in Definition \ref{asup} with $q\gg N^{1/9}$. Let $H(t)$ be as in \eqref{e:Ht}, with eigenvalues denoted by $\la_1(t),\la_2(t),\cdots,\lambda_N(t)$, and $t=N^{-1/3+\fd}$.  Fix $k\geq 1$ and numbers $s_1,s_2,\cdots, s_k$, there is a universal constant $\fc>0$ so that,
\begin{align}\begin{split}\label{e:comp1}
&\phantom{{}={}} \bP_{H}\left( N^{2/3} ( \lambda_i(0) - L - \cX )\geq s_i,1\leq i\leq k \right)\\
&= \bP_{H(t)}\left( N^{2/3} ( \lambda_i(t) - L_t - \cX_t )\geq s_i,1\leq i\leq k \right) +\O\left(N^{-\fc}\right),
\end{split}\end{align}
where $L_t,\cX_t$ are as defined in \eqref{e:defLt}. In the special case, $q=C N^{1/6}$, we have
\begin{align}\begin{split}\label{e:comp2}
&\phantom{{}={}} \bP_{H}\left( N^{2/3} ( \lambda_i(0) - L )\geq s_i,1\leq i\leq k \right)\\
&= \bP_{H(t)}\left( N^{2/3} ( \lambda_i(t) - L_t)\geq s_i,1\leq i\leq k \right) +\O\left(N^{-\fc}\right).
\end{split}\end{align}
The analogous statement holds for the smallest eigenvalues.
\end{theorem}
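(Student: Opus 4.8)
The plan is to prove \eqref{e:comp1} and \eqref{e:comp2} by a Green's function comparison carried out along the matrix Ornstein--Uhlenbeck interpolation between $H$ and $H(t)$, following the structure of the recursive moment estimate of Proposition \ref{t:Pmoment}. For $s\in[0,t]$ set $H_s\deq\e^{-s/2}H+(1-\e^{-s})^{1/2}W$, so $H_0=H$, $H_t=H(t)$, and each $H_s$ is of the form of Definition \ref{asup} with unchanged variance $1/N$ and cumulants $\cC_k(s)=\e^{-ks/2}\cC_k$ for $k\ge3$; hence Propositions \ref{p:minfty} and \ref{p:tminfty} and Theorem \ref{thm:edgerigidity} apply to $H_s$ uniformly in $s\in[0,t]$, producing the deterministic edge $L_s$, the random edge $\tilde L_s=L_s+\cX_s+\O_\prec((\sqrt Nq^3)^{-1})$ with $\cX_s=\tfrac1N\sum_{ij}(h_{ij}^2(s)-\tfrac1N)$, and the random measure $\tilde\rho_{\infty,s}$ with Stieltjes transform $\tilde m_{\infty,s}$. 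Since $H_s$ has the distribution of the matrix Ornstein--Uhlenbeck process at time $s$ started from $H$, differentiating $\bE[f(H_s)]$ in $s$ and integrating by parts in the Gaussian increment gives, once the second-cumulant term cancels the diffusion term, $\tfrac{\d}{\d s}\bE[f(H_s)]=-\sum_{i\le j}\sum_{p\ge2}\tfrac{c_{ij}\cC_{p+1}(s)}{Nq^{p-1}}\bE[\del_{ij}^{p+1}f(H_s)]$ with $c_{ij}=\O(1)$, for any function $f$ of the matrix; the comparison reduces to integrating this in $s$.

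First I would reduce the $k$-point edge probabilities in \eqref{e:comp1} to expectations $\bE[\Theta_s(H_s)]$, where $\Theta_s$ is a bounded smooth functional of $\Tr\chi_j(H_s-L_s-\cX_s)$ for fixed smooth cut-offs $\chi_j$ approximating $\1_{[s_jN^{-2/3},\infty)}$ on scale $N^{-2/3+\eps_0}$, rewritten via a Helffer--Sj\"ostrand representation as a functional of the normalized traces $m_s$ at arguments $L_s+\cX_s+E+\ri\eta$ with $\eta\geq N^{-2/3-\eps_0}$; this is the standard reduction of \cite{MR2871147,Lee2016} adapted to a random shift. Theorem \ref{thm:edgerigidity} for $H_s$ controls all errors: for $q\gg N^{1/9}$ it yields $\lambda_1(H_s)-\tilde L_s\le N^{-2/3+\fc}$ and $\Im m_s(\tilde z_s)\asymp\Im\tilde m_{\infty,s}(\tilde z_s)$ on the relevant domain, so that $\Theta_s$ captures the edge statistics up to $\O(N^{-\fc})$; it also lets us replace $L+\cX$ by $\tilde L_0$ and $L_t+\cX_t$ by $\tilde L_t$ at negligible cost. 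That $\Theta_s$ depends only on $m_s$ is used so that each $\del_{ij}$ falling on a Green's function produces a factor $\tfrac1N\sum_kG_{ik}G_{kj}\prec\tfrac{\Im m_s}{N\eta}$ as in \eqref{e:derivativeofm}, while $\del_{ij}$ falling on the shift $\cX_s$ produces $\del_{ij}\cX_s\prec(Nq)^{-1}$ and $\del_{ij}^2\cX_s=\O(1/N)$.

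Next I would differentiate $\bE[\Theta_s(H_s)]$ in $s$: besides the cumulant terms above this has an explicit term $-\dot L_s\,\bE[\del_z\Theta_s]$ with $\dot L_s=\O(1/q^2)$ from the motion of the deterministic edge. The structural point, exactly as in Proposition \ref{t:Pmoment}, is that over $[0,t]$ the whole spectrum drifts, that this collective drift is tracked to leading order by $L_s+\cX_s$ (equivalently, by the edge of $\tilde\rho_{\infty,s}$), and that consequently the explicit $\dot L_s$ contribution is cancelled, up to lower order, by the part of the $p\ge2$ cumulant terms in which the flow derivatives fall on $\cX_s$ inside the spectral argument -- this is the analogue of the cancellation of $\cX m^2$ against the $\cC_3,\cC_4$ terms in \eqref{e:p=2}--\eqref{e:cancel}, and is the reason one subtracts precisely $L_s+\cX_s$. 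After this cancellation every surviving term carries an extra factor of the form $q^{-3}+(N\eta)^{-1}$ beyond the prefactor $\cC_{p+1}(s)/(Nq^{p-1})$, the off-diagonal bound $q^{-1}+(N\eta)^{-1/2}\prec N^{-1/9}$, and Ward identities; combined with the uniform-in-$s$ control of $m_s$ from Theorem \ref{thm:edgerigidity} this gives $|\tfrac{\d}{\d s}\bE[\Theta_s]|\lec N^{-1/3-\fc}$ for $s\le t$. Integrating over $[0,t]$ with $t=N^{-1/3+\fd}$ and $\fd>0$ small enough yields \eqref{e:comp1}; the smallest-eigenvalue case is identical near $-\tilde L_s$.

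For \eqref{e:comp2} (the case $q=CN^{1/6}$) I would additionally record the elementary bound $N^{2/3}|\cX_t-\cX_0|\prec N^{-1/6+\fd}q^{-1}+N^{-1/2+\fd/2}=\O_\prec(N^{-\fc})$, obtained from $\cX_s=\e^{-s}\cX_0+(1-\e^{-s})\cX^W+\tfrac{2\e^{-s/2}(1-\e^{-s})^{1/2}}{N}\sum_{ij}h_{ij}w_{ij}$ using $\cX^W=\O_\prec(1/N)$ and $\sum_{ij}h_{ij}w_{ij}=\O_\prec(1)$ for the GOE entries $w_{ij}$; in particular the $W$-dependence of $\cX_t$ is negligible on the $N^{-2/3}$ scale. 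Running the comparison above with the augmented observable that also records the coordinate $N^{2/3}\cX_s$ then transports the joint law of $\big((N^{2/3}(\lambda_i(0)-L-\cX))_i,\,N^{2/3}\cX\big)$ to that of $\big((N^{2/3}(\lambda_i(t)-L_t-\cX_t))_i,\,N^{2/3}\cX_t\big)$ up to $\O(N^{-\fc})$, and summing the two coordinates gives \eqref{e:comp2}. I expect the main obstacle to be the cancellation in the previous paragraph: making precise, for the edge observable $\Theta_s$ rather than for $\bE[|P|^{2r}]$, the cancellation of the $\dot L_s$ term and of the leading $\cC_3,\cC_4$ flow contributions against the $\cX_s$-derivatives, and then verifying that every residual error stays below $N^{-1/3-\fc}$ uniformly down to $q\sim N^{1/9}$ -- since that exponent is saturated, the accounting of powers of $q^{-1}$, $(N\eta)^{-1}$ and $q^{-1}+(N\eta)^{-1/2}$ has to be essentially optimal, which is what forces both $t=N^{-1/3+\fd}$ and the smallness of $\fd$.
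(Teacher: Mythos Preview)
Your overall architecture matches the paper's: reduce the edge probabilities to a smooth functional of $N\int \Im m_s(\tilde L_s+y+\ri\eta)\,\d y$ (the sandwich \eqref{e:resolventexp}--\eqref{e:sandwich}), differentiate along the Ornstein--Uhlenbeck flow, expand the $\dot h_{jk}$ part by the cumulant formula, and observe that a crucial cancellation kills the leading drift. So the strategy is right. But the concrete description of the cancellation and the size estimates both miss the mark.

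\textbf{The cancellation.} You say $\dot L_s$ is cancelled by ``the part of the $p\ge2$ cumulant terms in which the flow derivatives fall on $\cX_s$ inside the spectral argument'', and you invoke both $\cC_3$ and $\cC_4$. In the paper it is neither. Derivatives hitting the random shift contribute only through $|\del_{jk}^p\tilde z_s|\lesssim 1/N$ (see \eqref{e:derofm}) and are absorbed into the generic error $\Im m_s/(N\eta)$; they play no role in the cancellation. The actual cancellation is between the explicit drift term $\dot L_t\sum_{ij}G_{ij}^2$ (with $\dot L_t=-12\cC_4/q^2+\O(q^{-4})$, a \emph{deterministic} quantity) and the leading part of the $p=3$ cumulant term $J_3$, where all three $\del_{jk}$'s hit the Green's functions, producing $-12\,G_{ik}G_{jj}^2G_{kk}G_{ki}$; using $G_{jj},G_{kk}=-1+\O_\prec(q^{-1})$ this becomes $+\tfrac{12\cC_4}{q^2}\sum_{ik}G_{ik}^2$ and cancels $\dot L_t\sum G_{ij}^2$ (Proposition \ref{p:Dterms}). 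The $\cC_3$ term $J_2$ does \emph{not} participate in any cancellation; it is bounded directly to $\O(N^{1+5\fc}/q)$ by a second cumulant expansion of the residual $\tfrac1N\sum_{ijk}G_{ij}G_{ki}G_{jj}G_{kk}$. Finally, $\dot\cX_s$ is not cancelled at all: one simply uses the large-deviation bound $|\dot\cX_s|\prec (t^{1/2}N^{1/2}q)^{-1}$, which after multiplication by $\sum_{ij}G_{ij}^2$ and the $y$-integral gives the $t^{1/2}N^{1/6}/q$ error in \eqref{e:comparison}.

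\textbf{The sizes.} Your claim $|\tfrac{\d}{\d s}\bE[\Theta_s]|\lesssim N^{-1/3-\fc}$ is far too strong and is not what the argument actually yields. After the cancellation the $y$-integrated derivative is of order $N^{6\fc}\big(N^{2/3}/q^3+N^{1/3}/q+N^{1/6}/(t^{1/2}q)\big)$; for $q\sim N^{1/9}$ the middle term alone is $\sim N^{2/9}$. The smallness comes only after integrating over $s\in[0,t]$ with $t=N^{-1/3+\fd}$, producing the three errors in \eqref{e:comparison}, each $\ll N^{-\fc}$ provided $\fd$ and $\fc$ are small enough relative to the margin in $q\gg N^{1/9}$. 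So the step ``each surviving term carries an extra factor $q^{-3}+(N\eta)^{-1}$'' is not what happens; the surviving terms are genuinely of different orders ($N/q$, $N^{4/3}/q^3$, $N^{4/3}/q^2$ before the $y$-integral), and the bookkeeping in Proposition \ref{p:Dterms} is what makes the accounting work.

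For \eqref{e:comp2} your idea is sound and essentially what the paper does implicitly: when $q=CN^{1/6}$ one has $N^{2/3}\cX_s=\O_\prec(1)$, and the comparison \eqref{e:comparison2} is run with the spectral argument centered at the \emph{deterministic} $L_s$ rather than $\tilde L_s$; the extra term this produces is again handled by the same $J_p$ estimates.
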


Theorem \ref{thm:Tracy-Widom} follows from combining Theorem \ref{thm:htedge} and Theorem \ref{thm:comp}.
\begin{proof}[Proof of Theorem \ref{thm:Tracy-Widom}]  The result 
\eqref{eqn:htedgeb1} follows from combining \eqref{eqn:htedge2} and \eqref{e:comp1}. The result 
\eqref{eqn:htedgeb2} follows from combining \eqref{eqn:htedge1} and \eqref{e:comp2}.
\end{proof}

Before proving Theorem \ref{thm:comp}, we need to introduce some notation. For any $E$, we define 
\begin{align}
\cN_t(E)\deq |\{i:\la_i(t)\geq L_t+\cX+E\}|,
\end{align}
and we write $\cN_0(E)$ as $\cN(E)$. We fix $\fc>0$, and take $\ell=N^{-2/3-\fc/3}$ and $\eta=N^{-2/3-\fc}$. Then with overwhelming probability, from \eqref{e:largeeig}, we know that $\la_1(t)\leq L_t+\cX_t+N^{-2/3+\fc}$. We define:
\begin{align}
\chi_E(x)={\bf 1}_{[E, N^{-2/3+\fc}]}(x-L_t-\cX_t),\quad \theta_\eta(x)\deq \frac{\eta}{\pi(x^2+\eta^2)}=\frac{1}{\pi}\Im\frac{1}{x+\ri \eta}.
\end{align}
From the same argument as in \cite[Lemma 2.7]{MR3034787}, we get that 
\begin{align}\label{e:resolventexp}
\Tr(\chi_{E+\ell}*\theta_\eta)(H(t))-N^{-\fc/9}\leq \cN_t(E)\leq \Tr(\chi_{E-\ell}*\theta_\eta)(H(t))+N^{-\fc/9},
\end{align}
hold with overwhelming probability. Let $K_i: \bR\mapsto[0,1]$ be a monotonic smooth function satisfying,
\begin{align}
K_i(x)=\left\{\begin{array}{cc}
0 & x\leq i-2/3,\\
1& x\geq i-1/3.
\end{array}\right.
\end{align}
We have that ${\bf 1}_{\cN_t(E)\geq i}=K_i(\cN_t(E))$, and since $K_i$ is monotonically increasing, and so 
\begin{align}
K_i\left(\Tr(\chi_{E+\ell}*\theta_\eta)(H(t))\right)+\O(N^{-\fc/9})\leq {\bf 1}_{\cN_t(E)\geq i}\leq K_i\left(\Tr(\chi_{E-\ell}*\theta_\eta)(H(t))\right)+\O(N^{-\fc/9}),
\end{align}

In this way we can express the locations of eigenvalues in terms of the integrals of the Stieltjes transform of the empirical eigenvalue densities.  We have,
\begin{align}\begin{split}\label{e:sandwich}
&\phantom{{}={}}\bE_{H(t)}\left[\prod_{i=1}^k K_i\left(\Im\left[\frac{N}{\pi}\int_{s_iN^{-2/3}+\ell}^{N^{-2/3+\fc}} m_t(\tilde L_t+y+\ri \eta)\right]\rd y\right)\right]+\O\left(N^{-\fc/9}\right)\\
&\leq \bP_{H(t)}\left( N^{2/3} ( \lambda_i(t) - L_t - \cX )\geq s_i,1\leq i\leq k \right) =\bE\left[\prod_{i=1}^k{\bf 1}_{\cN_t(s_iN^{-2/3})\geq i}\right] \\
&\leq \bE_{H(t)}\left[\prod_{i=1}^k K_i\left(\Im\left[\frac{N}{\pi}\int_{s_iN^{-2/3}-\ell}^{N^{-2/3+\fc}} m_t(\tilde L_t+y+\ri \eta)\right]\rd y\right)\right]+\O\left(N^{-\fc/9}\right).\end{split}
\end{align}

For the product of the functions of Stieltjes trasnform, we have the following comparison theorem.

\begin{proposition}\label{p:comp}
Let $H$ be as in Definition \ref{asup}, with $q\gg N^{1/9}$. We 
fix $\fc>0$, $E_1,E_2,\cdots, E_k=\O( N^{-2/3})$, $\eta=N^{-2/3+\fc}$ and $F:\bR^k\mapsto \bR$ a bounded test function with bounded derivatives. For $t\ll1$ we have
\begin{align}\begin{split}\label{e:comparison} 
&\phantom{{}={}}\bE_{H}\left[F\left(\left\{\Im\left[N\int_{E_i}^{N^{-2/3+\fc}} m(\tilde L+y+\ri \eta)\right]\right\}_{i=1}^k\right)\right]\\
&=\bE_{H(t)}\left[\left\{F\left(\Im\left[N\int_{E_i}^{N^{-2/3+\fc}} m_t(\tilde L_t+y+\ri \eta)\right]\right\}_{i=1}^k\right)\right]+\O\left( N^{6\fc}\left(\frac{N^{2/3} t}{q^3}+\frac{N^{1/3}t}{q}+\frac{t^{1/2}N^{1/6}}{q}\right)\right).
\end{split}\end{align}
In the special case, $q=C N^{1/6}$, we have
\begin{align}\begin{split}\label{e:comparison2} 
&\phantom{{}={}}\bE_{H}\left[F\left(\left\{\Im\left[N\int_{E_i}^{N^{-2/3+\fc}} m(L+y+\ri \eta)\right]\right\}_{i=1}^k\right)\right]\\
&=\bE_{H(t)}\left[\left\{F\left(\Im\left[N\int_{E_i}^{N^{-2/3+\fc}} m_t(L_t+y+\ri \eta)\right]\right\}_{i=1}^k\right)\right]+\O\left( N^{6\fc}\left(\frac{N^{2/3} t}{q^3}+\frac{N^{1/3}t}{q}+\frac{t^{1/2}N^{1/6}}{q}\right)\right).
\end{split}\end{align}
\end{proposition}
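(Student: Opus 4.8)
The plan is to interpolate between $H$ and $H(t)$ along the matrix Ornstein--Uhlenbeck flow and to control the derivative of the relevant expectation in the interpolation time. For $0\leq s\leq t$ put $H_s\deq\e^{-s/2}H+(1-\e^{-s})^{1/2}W$, so $H_0=H$, $H_t=H(t)$, and (up to a diagonal correction of size $\O(1/N)$) the law of $H_s$ coincides with the time-$s$ law of the matrix Ornstein--Uhlenbeck process started at $H$ with GOE-normalized noise. For $s\leq t\ll1$ every $H_s$ still satisfies Definition~\ref{asup} with sparsity parameter $\asymp q$ (its $(p+1)$th cumulant is $\e^{-(p+1)s/2}p!\,\cC_{p+1}/(Nq^{p-1})$ and $\bE[(H_s)_{ij}^2]=1/N$), so Theorems~\ref{locallaw} and \ref{thm:edgerigidity} apply to it; in particular $H_s\in\cA$ of \eqref{e:defA} with overwhelming probability, and hence \eqref{e:stateoutS}--\eqref{e:stateinS} together with \eqref{eqn:imtilminf} give $\Im[m_s(\tilde L_s+y+\ri\eta)]\asymp\Im[\tilde m_\infty]$, while \eqref{e:derivativeofG}, \eqref{e:derivativeofm} and \eqref{indLocLaw} give the a priori bounds $\sum_k|G^{(s)}_{ik}G^{(s)}_{kj}|\prec\Im[m_s]/\eta$, $|\del^p_{ij}m_s|\prec\Im[m_s]/(N\eta)$ and $\max_{i\neq j}|G^{(s)}_{ij}|\prec q^{-1}+(N\eta)^{-1/2}$, uniformly for the spectral arguments $\tilde L_s+y+\ri\eta$ with $y\in[E_i,N^{-2/3+\fc}]$. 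Here $m_s,\tilde L_s,\cX_s$ denote the quantities attached to $H_s$ as in \eqref{e:defLt}, and $\del_{ij}$ the derivative in the $(i,j)$-entry of $H_s$. Setting
\[
X^{(i)}_s\deq\Im\Big[N\int_{E_i}^{N^{-2/3+\fc}}m_s(\tilde L_s+y+\ri\eta)\,\d y\Big],\qquad\Phi(s)\deq\bE\big[F(X^{(1)}_s,\dots,X^{(k)}_s)\big],
\]
it suffices to bound $\Phi'(s)$ and integrate over $s\in(0,t)$; the $t$ and $t^{1/2}$ factors in the error of \eqref{e:comparison} arise from this integration, the $t^{1/2}$ reflecting the $\asymp s$ variance of the GOE component already present at time $s$.

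To differentiate $\Phi$ in $s$ we use $\del_s(H_s)_{ij}=-\tfrac12\e^{-s/2}h_{ij}+\tfrac12\e^{-s}(1-\e^{-s})^{-1/2}w_{ij}$ and integrate by parts in the Gaussian variables $w_{ij}$ and (via the cumulant expansion, exactly as in Propositions~\ref{p:DSE} and \ref{p:one-off}) in the $h_{ij}$. The Gaussian part contributes $\tfrac{c}{2N}\e^{-s}\bE[\del^2_{ij}F(\vec X_s)]$ per entry, while the second-cumulant term of the $h_{ij}$-expansion contributes $-\tfrac1{2N}\e^{-s}\bE[\del^2_{ij}F(\vec X_s)]$; these cancel up to a negligible $\O(1/N)$ diagonal discrepancy, leaving
\[
\Phi'(s)=-\tfrac12\sum_{ij}\sum_{p\geq2}\frac{\e^{-(p+1)s/2}\cC_{p+1}}{Nq^{p-1}}\,\bE\big[\del^{p+1}_{ij}F(\vec X_s)\big]+(\text{edge-motion terms})+\O(N^{-D}),
\]
where the edge-motion terms come from $\del_s$ acting on $L_s$ inside the spectral argument ($\del_s L_s=\O(1/q^2)$, harmless) and are of the same order. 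Every surviving term has $p\geq2$ and so carries at least one factor $q^{-1}$; the dominant ones are $p=2,3$.

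It remains to estimate those terms. A derivative $\del_{ij}$ acting on an $X^{(i')}_s$ either differentiates $m_s$, producing under the $y$-integral the off-diagonal product $\tfrac1N\sum_kG^{(s)}_{ik}G^{(s)}_{kj}$, or differentiates $\tilde L_s$ through $\cX_s$, producing $\tfrac{2-\delta_{ij}}{N}h_{ij}\cdot\del_zm_s$. After expanding $F^{(\ell)}$ by the chain rule, expanding the residual $G^{(s)}$-entries via $G_{ij}=G_{ii}\sum_{k\neq i}h_{ik}G^{(i)}_{kj}$ and a further cumulant expansion (as in Proposition~\ref{p:one-off} and the proof of Proposition~\ref{t:Pmoment}), and pulling out the deterministic weights, one reduces every term to a polynomial in the Green's-function entries of $H_s$ integrated in $y$. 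These are controlled by the a priori bounds above together with the gains special to the edge window: the prefactor $N$ against each $(N\eta)^{-1}$ coming from a derivative on $m_s$, the near-edge integral $\int_{E_i}^{N^{-2/3+\fc}}\Im[m_s(\tilde L_s+y+\ri\eta)]\,\d y$ over an interval of length $\O(N^{-2/3+\fc})$ on which $\Im[m_s]\asymp\sqrt{|y|+\eta}$ (or $\eta/\sqrt{|y|+\eta}$), and the telescoping identity $\int\del_z(m_s^2)\,\d y=[m_s^2]$ at the endpoints. Tracking the exponents and absorbing $N^{c\fc}$ losses into $N^{6\fc}$ yields a bound on $|\Phi'(s)|$ which integrates over $(0,t)$ to the right-hand side of \eqref{e:comparison}. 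For the special case $q=CN^{1/6}$ one uses in addition that $\cX=\O_\prec(1/(\sqrt Nq))\ll N^{-2/3-\fc}$ on the $N^{2/3}$ scale, so that $\tilde L,\tilde L_t$ may be replaced by $L,L_t$ in the spectral arguments without changing the estimates; this gives \eqref{e:comparison2}.

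\textbf{Main obstacle.} The real work lies in the third- and fourth-cumulant terms ($p=2,3$): unlike in the Wigner case they are not killed by moment matching and carry only $q^{-1}$, resp.\ $q^{-2}$, so one must show that after fully expanding the Green's-function entries they contain, the remaining polynomial in $G^{(s)}$ — integrated in $y$ over the edge window and weighted by derivatives of $F$ — collapses to the stated $q$-dependent bound. This demands the same careful bookkeeping as the proof of Proposition~\ref{t:Pmoment}, combining the off-diagonal decay $\max_{i\neq j}|G^{(s)}_{ij}|\prec q^{-1}+(N\eta)^{-1/2}$, the Ward identity $\sum_k|G^{(s)}_{ik}|^2=\Im[G^{(s)}_{ii}]/\eta$, the trace bound $|\tfrac1{N^3}\Tr[(G^{(s)})^3]|\leq\Im[m_s]/(N\eta)^2$, the fluctuation estimate $|G^{(s)}_{ii}-m_s|\prec q^{-1}+(N\eta)^{-1/2}$, and the near-edge $y$-integration of $\Im[m_s]$. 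A subsidiary point is to verify that the $s$-dependence of $\tilde L_s$ and $\cX_s$ — controlled by \eqref{e:esttL}, \eqref{e:defLt} and the explicit evolution of $L_s$ and of the cumulants of $H_s$ — stays within the same error budget.
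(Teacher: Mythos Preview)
Your overall strategy is the same as the paper's: differentiate $\Phi(s)=\bE[F(\vec X_s)]$ along the Ornstein--Uhlenbeck flow, apply the cumulant expansion to $\dot h_{ij}(s)\,\partial_{ij}$, observe the cancellation of the second-cumulant piece against the Gaussian integration by parts, and bound the remaining $p\geq2$ cumulant terms using the local law. That framework is correct, and the paper carries it out exactly this way, with the detailed term-by-term bounds recorded in Proposition~\ref{p:Dterms}.

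However, there is a genuine gap. You write that the edge-motion contribution from $\partial_s L_s=\O(q^{-2})$ is ``harmless'' and treat its verification as ``a subsidiary point''. In fact this term is \emph{not} controllable on its own: differentiating the spectral argument produces $\dot L_s\sum_{ij}G_{ij}^2=\dot L_s\,N\partial_z m_s$, which after the $y$-integration over the edge window is of order $q^{-2}N^{2/3+\fc}$, and hence after integrating over $s\in(0,t)$ gives an error $\asymp tN^{2/3}/q^2$. For $q$ barely larger than $N^{1/9}$ and $t=N^{-1/3+\fd}$ this is of order $N^{1/9+\fd}$, which is \emph{large} and not absorbed by any of the three error terms in \eqref{e:comparison}. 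The paper's proof shows (Proposition~\ref{p:Dterms}) that the fourth-cumulant term $J_3$ has the explicit leading piece
\[
J_3=\pm\frac{12\cC_4}{q^2}\sum_{ij}\bE\big[F'(X_s)G_{ij}^2\big]+\O\Big(\frac{N^{4/3+2\fc}}{q^3}+\frac{N^{1+5\fc}}{q^2}\Big),
\]
and separately computes $\dot L_s=\mp 12\cC_4/q^2+\O(q^{-4})$; these two leading pieces cancel exactly, leaving only the smaller remainders that \emph{do} fit into the stated error. This cancellation is the mechanism that singles out $L_s$ as the correct deterministic centering, and your sketch does not identify it --- you instead propose to bound $J_3$ directly ``to the stated $q$-dependent bound'', which is impossible since its leading part alone is already $\asymp q^{-2}N^{4/3}$ before the $y$-integration.

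A second, smaller point: the $t^{1/2}N^{1/6}/q$ error does not arise from ``the $\asymp s$ variance of the GOE component'' in any diffuse sense, but specifically from the term $\dot\cX_s\sum_{ij}G_{ij}^2$ in $\partial_s X_s$, via the large-deviation bound $|\dot\cX_s|\prec (s^{1/2}N^{1/2}q)^{-1}$ and the subsequent $\int_0^t s^{-1/2}\,\d s$. You should isolate this term explicitly rather than folding it into the general bookkeeping.
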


\begin{proof}[Proof of Theorem \ref{thm:comp}]
Since $q\gg N^{1/9}$ and $t=N^{-1/3+\fd}$, we can take $\fc$  and $\fd$ small, and then the error terms in \eqref{e:comparison} and \eqref{e:comparison2} are of order $\O(N^{-\fc})$.  By combining \eqref{e:sandwich} and \eqref{e:comparison}, we get
\begin{align}\begin{split}
&\leq \bP_{H(t)}\left( N^{2/3} ( \lambda_i(t) - L_t - \cX )\geq s_i+2N^{2/3}\ell,1\leq i\leq k \right)+\O(N^{-\fc/9})\\
&\leq \bE_{H(t)}\left[\prod_{i=1}^k K_i\left(\Im\left[\frac{N}{\pi}\int_{s_iN^{-2/3}+\ell}^{N^{-2/3+\fc}} m_t(\tilde L_t+y+\ri \eta)\right]\rd y\right)\right]+\O\left(N^{-\fc/9}\right)\\
&\leq \bP_{H}\left( N^{2/3} ( \lambda_i(0) - L - \cX )\geq s_i,1\leq i\leq k \right) \\
&\leq \bE_{H(t)}\left[\prod_{i=1}^k K_i\left(\Im\left[\frac{N}{\pi}\int_{s_iN^{-2/3}-\ell}^{N^{-2/3+\fc}} m_t(\tilde L_t+y+\ri \eta)\right]\rd y\right)\right]+\O\left(N^{-\fc/9}\right)\\
&\leq \bP_{H(t)}\left( N^{2/3} ( \lambda_i(t) - L_t - \cX )\geq s_i-2N^{2/3}\ell,1\leq i\leq k \right)+\O(N^{-\fc/9}).
\end{split}
\end{align}
Since $N^{2/3}\ell=N^{-\fc/3}\ll1$, \eqref{e:comp1} follows. For \eqref{e:comp2}, analogous to \eqref{e:sandwich}, we have,
\begin{align}\begin{split}\label{e:sandwich2}
&\phantom{{}={}}\bE_{H(t)}\left[\prod_{i=1}^k K_i\left(\Im\left[\frac{N}{\pi}\int_{s_iN^{-2/3}+\ell}^{N^{-2/3+\fc}} m_t(L_t+y+\ri \eta)\right]\rd y\right)\right]+\O\left(N^{-\fc/9}\right)\\
&\leq \bP_{H(t)}\left( N^{2/3} ( \lambda_i(t) - L_t )\geq s_i,1\leq i\leq k \right)  \\
&\leq \bE_{H(t)}\left[\prod_{i=1}^k K_i\left(\Im\left[\frac{N}{\pi}\int_{s_iN^{-2/3}-\ell}^{N^{-2/3+\fc}} m_t(L_t+y+\ri \eta)\right]\rd y\right)\right]+\O\left(N^{-\fc/9}\right).\end{split}
\end{align}
Hence \eqref{e:comp2} follows from combining \eqref{e:sandwich2} and \eqref{e:comparison2}.

\end{proof}

\begin{proof}[Proof of Proposition \ref{p:comp}]
For simplicity of notation we only prove the case $k=1$.  The general case can be proved in the same way. Let,
\begin{align}
X_t=\Im\left[N\int_{E}^{N^{-2/3+\fc}}m_t(\tilde L_t +y+\ri \eta) \rd y\right].
\end{align}
We prove that 
\begin{align}\label{e:onecomp}
|\bE[F(X_t)]-\bE[F(X_0)]|\lesssim N^{6\fc}\left(\frac{N^{2/3} t}{q^3}+\frac{N^{1/3}t}{q}+\frac{t^{1/2}N^{1/6}}{q}\right).
\end{align}
Similarly to \cite[Proposition 7.2]{Lee2016}
\begin{align}
\frac{\rd}{\rd t}\bE[F(X_t)]
=\bE\left[F'(X_t)\frac{\rd X_t}{\rd t}\right]
=\bE\left[F'(X_t)\Im\int_{E}^{N^{-2/3+\fc}}\left(\sum_{ijk}\dot{h}_{jk}(t)\frac{\del G_{ii}}{\del H_{jk}}+(\dot{L}_t+\dot{\cX}_t)\sum_{ij}G_{ij}^2\right)\rd y\right],
\end{align}
where by definition
\begin{align}
\dot{h}_{jk}(t)=-\frac{1}{2}e^{-t/2}h_{jk}+\frac{e^{-t}}{2\sqrt{1-e^{-t}}}w_{jk}, \quad \dot{\cX}(t)=\frac{2}{N}\sum_{ij}h_{ij}(t)\dot{h}_{ij}(t).
\end{align}
By a large deviation estimate, we have 
$
|\dot\cX_t|\prec 1/(t^{1/2}N^{1/2}q),
$
and 
\begin{align}
\bE\left[F'(X_t)\Im\int_{E}^{N^{-2/3+\fc}}\dot{\cX}_t\sum_{ij}G_{ij}^2\rd x\right]\prec \frac{N^{1/6}}{t^{1/2}q}.
\end{align}
By the cumulant expansion formula, we get
\begin{align}
-\sum_{ijk}\bE\left[\dot h_{jk}(t)F'(X_t) G_{ij}G_{ki}\right]
=\sum_{p\geq 2} \frac{e^{-(p+1)t}\cC_{p}}{2Nq^{p-1}}\bE[\del_{jk}^p(F'(X_t)G_{ij}G_{ki})]
\end{align}
We notice that 
\begin{align}
\dot L_t=\mp\frac{12\cC_4}{q^2}+\O\left(\frac{1}{q^4}\right)
\end{align}
Thus, \eqref{e:onecomp} follows from the following Proposition. The proof of \eqref{e:comparison2} follows from the same argument as above, with replacing $\tilde L_t$ by $L_t$.
\end{proof}

\begin{proposition}\label{p:Dterms}
Under the assumptions of Proposition \ref{p:comp}, for any $p\geq 2$, let,
\begin{align}
J_p\deq  \frac{e^{-(p+1)t}\cC_{p+1}}{2Nq^{p-1}}\bE[\del_{jk}^p(F'(X_t)G_{ij}G_{ki})].
\end{align}
Then, with overwhelming probability
$
J_2=\O(N^{1+5\fc}/q),
$
\begin{align}
J_3=\pm\frac{12\cC_4}{q^2}\sum_{ij}\bE[F'(X_t)G_{ij}^2]+\O\left(\frac{N^{4/3+2\fc}}{q^3}+\frac{N^{1+5\fc}}{q^2}\right)
\end{align}
and for any $p\geq 4$, $J_p=\O(N^{4/3+2\fc}/q^3)$.
\end{proposition}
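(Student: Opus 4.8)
The plan is to establish all three estimates by a single cumulant‑expansion argument, run on the overwhelming‑probability event $\cA$ of \eqref{e:defA} and organized exactly as in the proofs of Propositions \ref{p:one-off} and \ref{t:Pmoment}. First I would fix notation and collect the a priori inputs: write $\tilde z=\tilL_t+y+\ri\eta$ (or $L_t+y+\ri\eta$ when $q=CN^{1/6}$) with $\eta=N^{-2/3+\fc}$ and $|y|\lesssim N^{-2/3+\fc}$, so that $\kappa:=|y|$ obeys $\kappa+\eta\asymp N^{-2/3+\fc}$. On $\cA$, Theorem \ref{thm:edgerigidity} and Proposition \ref{p:tminfty} give $m_t(\tilde z)\asymp\tilde m_\infty(\tilde z)=\O(1)$, $\Im m_t(\tilde z)\asymp\Im\tilde m_\infty(\tilde z)\prec(\kappa+\eta)^{1/2}\asymp N^{-1/3+\fc/2}$, $\max_a|G_{aa}|\prec1$, $\max_{a\ne b}|G_{ab}|\prec\Phi:=q^{-1}+(N\eta)^{-1/2}$, together with the Ward bounds $\sum_b|G_{ab}G_{cb}|\prec\Im m_t(\tilde z)/\eta=:\Theta$ and the entrywise/isotropic bounds of \eqref{indLocLaw}. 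Using \eqref{e:derivativeofm} and the formula for $\frac{d}{dt}\bE[F(X_t)]$ from the proof of Proposition \ref{p:comp}, one also checks that $\del_{jk}X_t$ and all its higher $\del_{jk}$-derivatives are $\O_\prec$ of small negative powers of $N$, so that $|\del_{jk}^aF'(X_t)|\prec1$ for every $a$, with a genuine gain whenever $a\ge1$.

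Next I would Leibniz‑expand
\[
\del_{jk}^p\big(F'(X_t)\,G_{ij}G_{ki}\big)=\sum_{a+b+c=p}\frac{p!}{a!\,b!\,c!}\big(\del_{jk}^aF'(X_t)\big)\big(\del_{jk}^bG_{ij}\big)\big(\del_{jk}^cG_{ki}\big),
\]
and expand $\del_{jk}^bG_{ij}$, $\del_{jk}^cG_{ki}$ into $\O_p(1)$ monomials in $\{G_{ab}:a,b\in\{i,j,k\}\}$ via $\del_{jk}G_{ab}=-(G_{aj}G_{kb}+G_{ak}G_{jb})$. A short induction on the index sets shows that, since $G_{ij}G_{ki}$ has two off‑diagonal factors and neither of them has index set $\{j,k\}$, every monomial so produced still contains at least two off‑diagonal factors (for generic $i,j,k$). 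Each such monomial $M$, with its $\del_{jk}^aF'$ prefactor, I would then process by a secondary cumulant/resolvent expansion as in Proposition \ref{p:one-off}: pick an off‑diagonal $G_{ab}$ in $M$, substitute $G_{ab}=G_{aa}\sum_{\ell\ne a}h_{a\ell}G^{(a)}_{\ell b}$ with $G^{(a)}_{\ell b}=G_{\ell b}-G_{a\ell}G_{ab}/G_{aa}$, and cumulant‑expand in $h_{a\ell}$. Each round inserts a prefactor $\cC_{p'+1}/(Nq^{p'-1})$, $p'\ge1$, and introduces the new summation index $\ell$ inside a fresh off‑diagonal entry $G^{(a)}_{\ell\bullet}$, so that $\sum_\ell$ costs at most $\O(\sqrt{N\Theta})$ rather than a full $N$. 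Crucially — just as in the proof of Proposition \ref{t:Pmoment}, where the naive‑power‑counting pieces from the $p=2,3$ cumulants cancel against the random term $\cX m_N^2P^{r-1}\bar P^r$ — the would‑be dangerous purely diagonal pieces here cancel algebraically (among themselves and against the $G_{ii}G_{jj}$‑type terms); what survives is either $\O_\prec(N^{-1})$, a genuinely off‑diagonal remainder that Ward‑contracts down, or an explicitly reducible polynomial in $m_t$. Iterating a bounded number of times drives all error terms below $\O_\prec(N^{-1})$.

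Carrying out the bookkeeping then gives the three claims. For $p=2$ the scheme leaves no surviving main term and returns $J_2=\O_\prec(N^{1+5\fc}/q)$. For $p\ge4$ the prefactor $q^{-(p-1)}\le q^{-3}$ already supplies the room, and the crude estimate on the worst (two‑off‑diagonal) monomials yields $J_p=\O_\prec(N^{4/3+2\fc}/q^3)$. For $p=3$ one isolates the single class of $a=0$ monomials that do \emph{not} pick up an extra $q^{-1}$: after the reduction these collapse to (a bounded power of $m_t(\tilde z)$) times (a bounded power of $\sum_aG_{aa}$) times $\sum_{ij}G_{ij}^2$, and using $\sum_aG_{aa}=Nm_t(1+\oo(1))$, the edge identity $m_t(\tilL_t)=-1+\O(q^{-2})$ (which follows from $P=\del_2P=0$ at the edge, cf.\ Proposition \ref{p:tminfty}), and $e^{-4t}=1+\O(t)$, the combinatorial coefficients sum to $\pm12$, so
\[
J_3=\pm\frac{12\cC_4}{q^2}\sum_{ij}\bE\big[F'(X_t)G_{ij}^2\big]+\O_\prec\!\Big(\frac{N^{4/3+2\fc}}{q^3}+\frac{N^{1+5\fc}}{q^2}\Big),
\]
the two errors coming from the $q^{-1}$‑improved non‑leading monomials and from the $\O(t)+\O(q^{-2})$ corrections to the coefficient. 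The sign depends on whether one works at $+\tilL_t$ or $-\tilL_t$; this is exactly the term that will cancel the $\dot L_t\sum_{ij}G_{ij}^2$ contribution to $\frac{dX_t}{dt}$, since $\dot L_t=\mp12\cC_4/q^2+\O(q^{-4})$.

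The hard part will be the combinatorial bookkeeping in the $p=3$ case: one must show that \emph{every} $a=0$ contribution that fails to gain a $q^{-1}$ reduces to the \emph{same} structure $\propto\sum_{ij}G_{ij}^2$ and that the coefficients really add up to $\pm12$, and — for every $p$ and every monomial — that the iterated cumulant/resolvent expansion extracts enough factors of $q^{-1}$, $(N\eta)^{-1}$ and $\Theta$, with the diagonal pieces genuinely cancelling rather than merely being bounded, to land inside the stated errors. This is the edge‑regime analogue of the cascade of cancellations in the proof of Proposition \ref{t:Pmoment}, and tracking which off‑diagonal entries may be paired under a Ward contraction through several rounds of expansion is where the real work lies.
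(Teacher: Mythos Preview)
Your overall strategy---Leibniz-expand $\del_{jk}^p(F'(X_t)G_{ij}G_{ki})$, sort the resulting monomials by the number of off-diagonal Green's function entries, bound those with three or more off-diagonals directly by Ward/$\Theta$, and handle the two-off-diagonal survivors by a single secondary resolvent expansion $G_{ij}=G_{jj}\sum_{\ell\ne j}h_{j\ell}G^{(j)}_{\ell i}$ plus one cumulant expansion---is exactly what the paper does, and your a~priori inputs and $p\ge 4$ treatment match the paper's proof essentially line for line.

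Where you overshoot is in anticipating ``would-be dangerous purely diagonal pieces'' and a cascade of cancellations analogous to the $\cX m_N^2$ mechanism in Proposition~\ref{t:Pmoment}. That machinery is not needed here, and invoking it obscures a much simpler argument. Because the seed $G_{ij}G_{ki}$ carries two off-diagonal factors whose index sets $\{i,j\}$ and $\{k,i\}$ are never equal to $\{j,k\}$, \emph{no} purely diagonal monomial is ever produced by $\del_{jk}^p$; every term has at least two off-diagonal entries. The paper simply observes this, lists the finitely many ``three or more off-diagonal'' patterns and bounds them by $\O(N^{1+3\fc})$, and is then left, for $p=2$, with the single two-off-diagonal shape $G_{ij}G_{ki}G_{jj}G_{kk}$ (bounded after one resolvent/cumulant step, no cancellation required), and for $p=3$, with the single two-off-diagonal shape $G_{ik}G_{jj}^2G_{kk}G_{ki}$ carrying combinatorial factor $-12$. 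Summing out $j$ and using $G_{jj}=G_{kk}=-1+\O_\prec(q^{-1})$ near the edge gives the $\pm 12\cC_4 q^{-2}\sum_{ik}G_{ik}^2$ main term directly; there is no ``reduction'' or iteration to perform, and the $\pm12$ is read off from the explicit Leibniz/derivative combinatorics rather than assembled from several classes of terms. In short: your plan is correct, but the proof is considerably more elementary than you expect---drop the $\cX$-style cancellation narrative and the iterated expansion, and just enumerate the (very few) two-off-diagonal survivors.
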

\begin{proof}[Proof of Proposition \ref{p:Dterms}]
Let $\tilde z_t=\tilde L_t+y+\ri \eta$.  We have,
\begin{align}\label{e:derofm}
|\del_{ij}^p\tilde z_t|\lesssim \frac{1}{N},
\quad
|\del^p_{ij}\tilde m_t(\tilde z_t)|\lesssim \frac{\Im[\tilde m_t(\tilde z_t)]}{N\eta}\lesssim \frac{\sqrt{y+\eta}}{N\eta}\lesssim N^{-2/3+3\fc/2},\quad p\geq 1.
\end{align}
and 
\begin{align}\label{e:derofGG}
\frac{1}{N^3}\sum_{ijk}\left|\del_{jk}^p(G_{ij}(\tilde z_t)G_{ki}(\tilde z_t))\right|\lesssim \frac{\Im[\tilde m_t(\tilde z_t)]}{N\eta}\lesssim N^{-2/3+3\fc/2},\quad p\geq 0,
\end{align}
with overwhelming probability.
Therefore, we have that 
\begin{align}\label{e:derofXt}
|\del_{jk}^pX_t|
=\left| \Im\left[N\int_{E}^{N^{-2/3+\fc}}\del_{jk}^p\tilde m_t(\tilde L_t +y+\ri \eta) \rd y\right]\right|\lesssim N^{-1/3+5\fc/2},\quad p\geq 1.
\end{align}
with overwhelming probability.
For any monomial of Green's function with at least three off-diagonal terms, e.g.,
\begin{align}\begin{split}\label{e:3offdiag}
\frac{1}{N}\sum_{ijk}|G^2_{ij}G_{jk}G_{ki}|,\quad
 \frac{1}{N}\sum_{ijk}|G^2_{ij}G_{jk}|,\quad
  \frac{1}{N}\sum_{ijk}|G_{ij}G_{jk}G_{ki}|,\\
\frac{1}{N}\sum_{ijk}|G^2_{ij}G^2_{jk}|,\quad
\quad \frac{1}{N}\sum_{ijk}|G^2_{ij}G^3_{jk}|,\quad
\frac{1}{N}\sum_{ijk}|G_{ij}G_{ik}G_{jk}^3|
\end{split}\end{align}
are of order $\O(N^{1+3\fc})$ with overwhelming probability.

For any $p\geq 4$, using \eqref{e:derofGG} and \eqref{e:derofXt}, we have the following bound,
\begin{align}
J_p =\frac{\O(1)}{Nq^3}\sum_{ijk}\bE[\del_{jk}^p(F'(X_t)G_{ij}G_{ki})]
\lesssim \frac{N^{4/3+2\fc}}{q^3},
\end{align}
with overwhelming probability.

For $p=2$, with overwhelming probability, we have
\begin{align}\begin{split}
J_2 &=\frac{\O(1)}{Nq}\sum_{ijk}\bE[\del_{jk}^2(F'(X_t)G_{ij}G_{ki})]
=\frac{\O(1)}{Nq}\bE[F'(X_t)\del_{jk}^2(G_{ij}G_{ki})]+\O\left(\frac{N^{1+5\fc}}{q}\right)\\
&=\frac{\O(1)}{Nq}\sum_{ijk}\bE[F'(X_t)G_{ij}G_{ki}G_{jj}G_{kk}]+\frac{1}{q}\{\text{terms in \eqref{e:3offdiag}}\}+\O\left(\frac{N^{1+5\fc}}{q}\right)\\
&=\frac{\O(1)}{Nq}\sum_{ijk}\bE[F'(X_t)G_{ij}G_{ki}G_{jj}G_{kk}]+\O\left(\frac{N^{1+5\fc}}{q}\right)\\
\end{split}\end{align}
For the first term, we use the identity $G_{ij}=\sum_{\ell\neq j}G_{jj}h_{j\ell}G_{\ell i}^{(j)}$, and the cumulant expansion
\begin{align}\begin{split}
&\phantom{{}={}}\frac{\O(1)}{Nq}\sum_{ijk}\bE[F'(X_t)G_{ij}G_{ki}G_{jj}G_{kk}]
=\frac{\O(1)}{Nq}\sum_{ijk, \ell\neq j}\bE[F'(X_t)G_{jj}h_{j\ell}G_{\ell i}^{(j)}G_{ki}G_{jj}G_{kk}]\\
&=\sum_{p\geq 1}\frac{\O(1)}{N^2q^p}\sum_{ijk, \ell\neq i}\bE[G_{\ell i}^{(j)}\del_{j\ell}^p(F'(X_t)G_{jj}G_{ki}G_{jj}G_{kk})]=\O\left(\frac{N^{1+5\fc}}{q}\right).
\end{split}\end{align}
where in the last estimate we used \eqref{e:derofGG}, \eqref{e:derofXt} and \eqref{e:3offdiag}.

For $p=3$, we have
\begin{align}\begin{split}
&\phantom{{}={}}J_3 
=\frac{e^{-4t}\cC_4}{2Nq^2}\sum_{ijk}\bE[F'(X_t)\del_{jk}^3(G_{ij}G_{ki})]+\O\left(\frac{N^{1+5\fc}}{q^2}\right)\\
&=-\frac{12e^{-4t}\cC_4}{Nq^2}\sum_{ijk}\bE[F'(X_t)G_{ik}G_{jj}G_{kk}G_{jj}G_{ki})]+\frac{1}{q^2}\{\text{terms in \eqref{e:3offdiag}}\}+\O\left(\frac{N^{1+5\fc}}{q^2}\right)\\
&=-\frac{12e^{-4t}\cC_4}{Nq^2}\sum_{ijk}\bE[F'(X_t)G_{ik}G_{jj}G_{kk}G_{jj}G_{ki})]+\O\left(\frac{N^{1+5\fc}}{q^2}\right)\\
&=\frac{12e^{-4t}\cC_4}{q^2}\sum_{ik}\bE[F'(X_t)G_{ik}G_{ki})]+\O\left(\frac{N^{4/3+2\fc}}{q^3}+\frac{N^{1+5\fc}}{q^2}\right)\\
&=\frac{12\cC_4}{q^2}\sum_{ik}\bE[F'(X_t)G_{ik}G_{ki})]+\O\left(\frac{N^{4/3+2\fc}}{q^3}+\frac{N^{1+5\fc}}{q^2}\right),
\end{split}\end{align}
where in the second to last line we used that $G_{jj}=\tilde m_{t}(\tilde z)+\O_{\prec}(1/q+1/N\eta)=-1+\O_{\prec}(1/q)$, and \eqref{e:derofGG}. This finishes the proof of Proposition \ref{p:Dterms}.

\end{proof}

\appendix

\section{Ward Identity}
Let $H$ be a symmetric matrix, with Green's function $G$, then for any $z=E+\ri\eta$ with $\eta>0$, we have the following Ward identity:
\begin{align}\label{e:Ward}
\sum_{j}|G_{ij}(z)|^2=\frac{\Im[G_{ii}(z)]}{\eta}.
\end{align}

The following follows from local semicircle estimates for sparse random matrices and the eigenvector delocalization.
\begin{proposition}
Let $H$ be as in Definition \ref{asup} and fix a large constant $\fb>0$. Then uniformly for any $z=E+\ri\eta$ such that $-\fb\leq E\leq \fb$ and $1/N\ll\eta\leq \fb$,  we have
\begin{align}\label{e:offbound1}
\max_{ij}|G_{ij}|=\O(1),
\end{align}
with overwhelming probability and 
\begin{align}\label{e:offbound2}
\quad \max_{i}\Im[G_{ii}(z)]\prec \Im[m_N(z)]
\end{align}
where $G$ is the Green's function of the matrix $H$.
\end{proposition}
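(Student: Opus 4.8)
The plan is to obtain both bounds as routine consequences of the entrywise local law of Theorem~\ref{locallaw} together with eigenvector delocalization for the sparse ensemble.

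For \eqref{e:offbound1} I would argue directly. In the stated range $-\fb\le E\le\fb$, $1/N\ll\eta\le\fb$, the error term $1/q+\sqrt{\Im m_{sc}(z)/(N\eta)}+1/(N\eta)$ in \eqref{indLocLaw} is $\O(1)$ (the last two terms are $\oo(1)$ because $\Im m_{sc}\le1$ and $N\eta\gg1$, and $1/q=\O(1)$), while $|m_{sc}(z)|\le1$ uniformly on $\bC_+$. Hence Theorem~\ref{locallaw} gives, with overwhelming probability, $\max_{ij}|G_{ij}(z)|\le|m_{sc}(z)|+\O(1)=\O(1)$.

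For \eqref{e:offbound2} the key input is the delocalization estimate $\max_{1\le\alpha,i\le N}|u_\alpha(i)|^2\prec 1/N$ for the $\ell^2$-normalized eigenvectors $u_1,\dots,u_N$ of $H$ (with $Hu_\alpha=\lambda_\alpha u_\alpha$). I would either cite this from \cite{MR3098073,MR2964770} or recover it from Theorem~\ref{locallaw} in the usual way: fixing a small $\delta>0$ and setting $\eta_0=N^{-1+\delta}$, the spectral decomposition gives, for any grid energy $E$ with $|E-\lambda_\alpha|\le\eta_0$,
\[
\Im[G_{ii}(E+\ri\eta_0)]=\sum_{\beta}\frac{\eta_0|u_\beta(i)|^2}{(\lambda_\beta-E)^2+\eta_0^2}\ge\frac{|u_\alpha(i)|^2}{2\eta_0},
\]
whereas $\Im[G_{ii}(E+\ri\eta_0)]\le\Im m_{sc}(E+\ri\eta_0)+|G_{ii}-m_{sc}|\prec1$ by Theorem~\ref{locallaw}; a union bound over the $N$ indices $i$ and over an $\eta_0$-mesh of energies covering the spectrum (which lies in a fixed compact set since $\|H\|=\O(1)$) then yields $\max_{\alpha,i}|u_\alpha(i)|^2\prec\eta_0=N^{-1+\delta}$, and letting $\delta\downarrow0$ gives the claim. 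Granting this, I would bound term by term in the spectral decomposition,
\[
\Im[G_{ii}(z)]=\sum_{\alpha=1}^N\frac{\eta\,|u_\alpha(i)|^2}{(\lambda_\alpha-E)^2+\eta^2}\prec\frac1N\sum_{\alpha=1}^N\frac{\eta}{(\lambda_\alpha-E)^2+\eta^2}=\Im[m_N(z)],
\]
the estimate being applied on the single overwhelming-probability event on which delocalization holds, which makes it automatically uniform in $i$ and in $z$ in the stated domain.

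I do not expect a genuine obstacle here. The only mildly delicate point is upgrading the pointwise-in-$z$ statements (the local law and the delocalization bound, which in the derivation must be invoked at the $N$ random eigenvalues $\lambda_\alpha$) to the uniform statements asserted; this is handled by the standard deterministic grid argument, using that $E\mapsto\Im[G_{ii}(E+\ri\eta)]$ and $\eta\mapsto\eta\,\Im[G_{ii}(E+\ri\eta)]$ vary by $\O(1)$-factors at scale $\eta$, together with a union bound over a polynomially fine mesh, which is harmless against overwhelming probability.
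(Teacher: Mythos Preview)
Your proposal is correct and matches the paper's approach: the paper does not give a detailed proof but simply states that the proposition ``follows from local semicircle estimates for sparse random matrices and the eigenvector delocalization,'' which is precisely the two ingredients you invoke and flesh out. Your derivation of delocalization from Theorem~\ref{locallaw} and the spectral-decomposition bound for $\Im[G_{ii}]$ are the standard arguments the paper is appealing to.
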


\section{Proof of Proposition \ref{p:minfty} and \ref{p:tminfty}}
\label{ap:pftminfty}

The proof of Proposition \ref{p:minfty} and \ref{p:tminfty} follow similarly to \cite[Lemma 4.1]{Lee2016}. We first proof Proposition \ref{p:tminfty}.
{
\begin{figure}[h]
\center
\includegraphics[height=4.5cm]{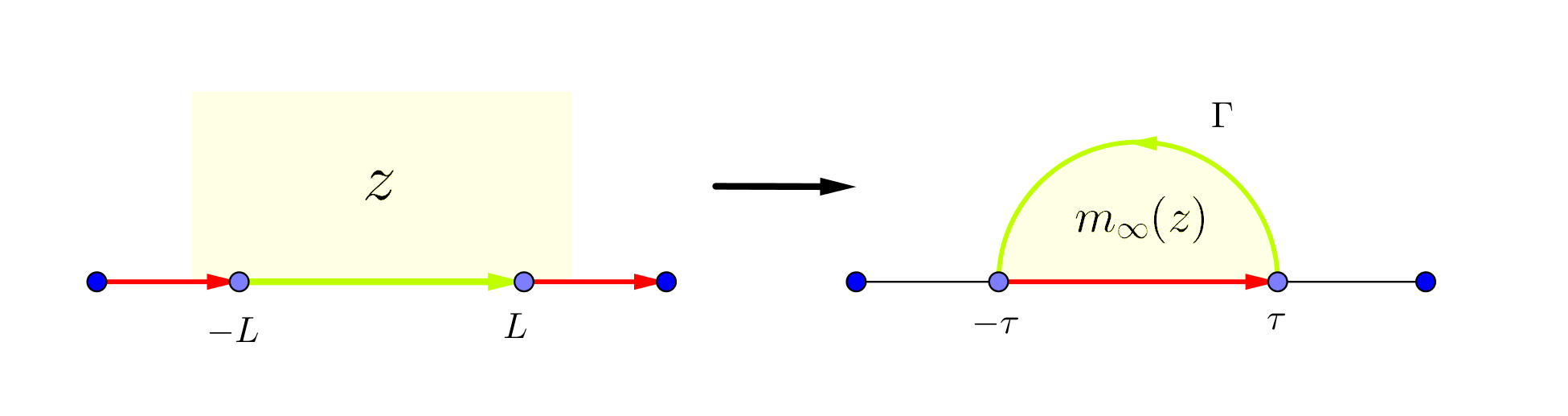}
\caption{$m_{\infty}$ is a biholomorphic map from the upper half plane to the region bounded by the curve $\Gamma$ and the interval $[-\tau, \tau]$, where the boundary $[-L,L]$ is mapped to the curve $\Gamma$, $[-\infty, -L]$ is mapped to $[0,\tau]$, and $[L,\infty]$ is mapped to $[-\tau, 0]$. }
\label{f:biholo}
\end{figure}
}
\begin{proof}[Proof of Proposition \ref{p:minfty}]
We introduce the following domains of the complex plane 
\begin{align}\begin{split}
\cD_z &\deq \{E+\ri\eta: -2\leq E\leq 2, 0\leq\eta\leq 2\},\\
\cD_w &\deq \{w\in \bC: |w|<3\}.
\end{split}\end{align}
Let 
\begin{align}\label{e:defR}
R(w)\deq -\frac{1}{w}-w-\frac{1}{q^2}\left(a_2w^3+a_3w^5+\cdots\right),
\end{align}
By definition, $P_0(z,w)=0$ if and only if $z=R(w)$. We first check that $R(w)$ has exactly two critical points on $\cD_w$. The derivatives of $R(w)$ are given by 
\begin{align}
R'(w)=\frac{1}{w^2}-1-\frac{1}{q^2}(3a_2w^2+5a_3w^4+\cdots),\quad R''(w)=-\frac{2}{w^3}-\frac{1}{q^2}(6a_2w+20a_3w^3+\cdots).
\end{align}
For $q$ large enough, $R''(w)<0$ on $(0,3)$ and $R''(w)>0$ on $(-3,0)$. Therefore $R'(w)$ is monotonically decreasing on $(0,3)$, and monotonically increasing on $(-3,0)$. Furthermore, for $C$ large enough, we have $R'(1-C/q^2)>0$ and $R'(1+C/q^2)<0$.  $R'(w)$ is an even function and so $R'(w)=0$ has two solutions on $(-3,3)$, which we denote by $\pm \tau$. Note that $\tau\in (1-C/q^2, 1+C/q^2)$. We let $L=R(-\tau)$, and so $-L=R(\tau)$.  We have that $L=2+\O(1/q^2)$. Next we show that $R(w)$ does not have other critical points on $\cD_w$.  The equation $R'(w)=0$ is equivalent to
\begin{align}\label{e:criticalpoints}
1-w^2-\frac{1}{q^2}\left(3a_2w^4+5a_3w^6+\cdots\right)=0.
\end{align}
On the boundary of $\cD_w$, we have
\begin{align}
|1-w^2|\geq 8\gg \frac{1}{q^2}\left|3a_2w^4+5a_3w^6+\cdots\right|.
\end{align}
Hence, by Rouch{\'e}'s theorem, the equation \eqref{e:criticalpoints} has the same number of roots as the quadratic equation $1-w^2=0$ on $\cD_w$. Since $1-w^2=0$ has two solutions on $\cD_w$, we find that $R(w)$ has exactly two critical points, i.e., $\pm \tau$ on $\cD_w$. 

We next show that for any $z\in \cD_z$, $P_0(z,w)$ has exactly two (counting multiplicity) solutions on $\cD_w$. On the boundary of $\cD_w$, we have
\begin{align}
|w^2+zw+1|\geq |w|^2-|z||w|-1\geq 2\gg \frac{1}{q^2}\left|a_2w^4+a_3w^6+\cdots\right|.
\end{align}
Hence, by Rouch{\'e}'s theorem, the equation $P_0(z,w)=0$ has the same number of roots as the quadratic equation $1+zw+w^2=0$ on $\cD_w$. Since $1+zw+w^2=0$ has two solutions on $\cD_w$, we find that $P_0(z,w)=0$ has exactly solutions (counting multiplicity)  on $\cD_w$. In particular, for any $z\in (-L,L)$, $P(z, w)=0$ has two solutions $w(z)\in \bC_+$ and $\bar w(z)\in \bC_-$, and $w(z)$ forms a curve on $\bC_+$, joining $-\tau$ and $\tau$, which we denote by $\Gamma$.

Then we that $R(w)$ is biholomorphic from the region $\cD_\Gamma$, bounded by the curve $\Gamma$ and the interval $[-L,L]$ to the upper half plane, as in Figure \ref{f:biholo}. By definition \eqref{e:defR}, $R(w)$ is holomorphic on $\cD_\Gamma$, and continuously extends to the boundary except at $w=0$. In a small neighborhood of $0$, the imaginary part of $R(w)$ satisfies
\begin{align}
\Im[R(w)]=\Im[w]\left(\frac{1}{|w|^2}-1\right)+\O\left(\frac{\Im[w]}{q^2}\right)>0.
\end{align}
Hence, by maximum principle, we have $\Im[R(w)]>0$ on $w\in \cD_\Gamma$, i.e. $R(w)$ maps $\cD_\Gamma$ to the upper half plane. If we view $R(w)$ as a map to the compactified complex plane, i.e. $\bC\cup\{\infty\}$, then $R(0)=\infty$. Moreover, it extends to a orientation preserving bijection between the boundary of $\cD_\Gamma$ and the boundary of $\cC_+$, where the boundary curve $\Gamma$ is mapped to $[-L,L]$, $[0,\tau]$ is mapped to $[-\infty,-L]$, and $[-\tau,0]$ is mapped to $[L, \infty]$. For any $z\in\bC_+$, the number of solutions to $z=R(w)$ is given by 
\begin{align}
\frac{1}{2\pi \ri}\int_{\del \cD_\Gamma}\frac{R'(w)}{R(w)-z}\rd w
=\frac{1}{2\pi \ri}\int_{\del \bC_+\cup \{\infty\}}\frac{1}{\xi-z}\rd \xi=1.
\end{align}
It follows $R(w)$ is a biholomorphic map from $\cD_\Gamma$ to the upper half plane.  We take $m_{\infty}$ to be the inverse of $R(w)$. 

Finally, we check the properties of $m_{\infty}(z)$. The function $m_{\infty}(z)$ is a holomorphic map from the upper half plane $\bC_+$ to $\cD_\Gamma$, satisfying $P_0(z, m_{\infty}(z))=0$. Let $\rho_\infty$ be the measure obtained by Stieltjes inversion of $m_\infty(z)$. To show that $\rho_\infty$ is a probability measure, it suffices to check that $\lim_{y\rightarrow \infty}\ri y m_{\infty}(\ri y) =-1$. Since $m_{\infty}(z)$ is bounded, one can check that $\lim_{z\rightarrow \infty} m_{\infty}(z)=0$. Thus,
\begin{align}
0=\lim_{y\rightarrow \infty}P_0(z, m_{\infty}(\ri y))=\lim_{y\rightarrow \infty}(1+\ri ym_{\infty}(\ri y)),
\end{align}
which implies that  $\lim_{y\rightarrow \infty}\ri y m_{\infty}(\ri y) =-1$, and $\rho_{\infty}$ is a probability measure. Moreover, 
\begin{align}
\lim_{\eta\rightarrow 0} \Im [m_{\infty}(E+\ri\eta)]=0,
\end{align}
for $E\not\in [-L,L]$. Hence, $\rho_{\infty}$ is a measure supported on $[-L,L]$. 

In the following we study the behaviors of $m_{\infty}$ and $\rho_{\infty}$ at the edges $\pm L$.  Let $z=R(w)$.  In a small neighborhood of the critical point $-\tau$, we have the expansion,
\begin{align}\begin{split}
z&=R(-\tau)+R'(-\tau)(w+\tau)+\frac{R''(-\tau)}{2}(w+\tau)^2(1+\cE(w))\\
  &=L+\frac{R''(-\tau)}{2}(w+\tau)^2(1+\cE(w)),
\end{split}\end{align}
where the error term satisfies $\Re[\cE(w)]=\O(|w+\tau|)$ and $\Im[\cE[w]]=\O(\Im[w])$.
Since $R''(-\tau)>0$, we find that in a small neighborhood of $-\tau$
\begin{align}\label{e:wtau}
\left(\frac{2}{R''(-\tau)}\right)^{1/2}\sqrt{z-L}=(w+\tau)\left(1+\cE(w)\right)^{1/2},
\end{align}
where we choose the branch of the square root so that $\sqrt{z-L}\in \bC^+$. By taking imaginary part on both sides of \eqref{e:wtau}, and noting  that $\Im[(1+\cE(w))^{1/2}]=\O(\Im[w])$, we get
\begin{align}
\Im[\sqrt{z-L}]\asymp \Im[w](1+\O(w+\tau)).
\end{align}
In a small neighborhood of $-\tau$, or equivalently, in a small neighborhood of $L$, we get
\begin{align}
\Im[m_{\infty}(z)]= \Im[w]\asymp\Im[\sqrt{z-L}]\asymp 
\left\{
\begin{array}{cc}
\sqrt{\kappa+\eta}, & \text{if $\Re[z]\leq L$,}\\
\eta/\sqrt{\kappa+\eta}, & \text{if $\Re[z]\geq L$,}
\end{array}
\right.
\end{align}
where $\kappa =|\Re[z]-z|$. In particular, by the Stieltjes inversion formula, $\rho_\infty$ has square root behavior at edge $L$.  For the derivative of $P_0(z,m_\infty(z))$, by the chain rule, we have
\begin{align}
0=\del_z P_0(z, m_\infty(z))=m_\infty(z)+\del_zm_\infty(z)\del_2P_0(z,m_{\infty}(z)).
\end{align}
Since $m_\infty(z)$ has square root behavior at $L$, we have $|\del_z m_\infty(z)|\asymp |z-L|^{-1/2}$, and
\begin{align}
|\del_2 P_0(z,m_{\infty}(z))|=\frac{|m_\infty(z)|}{|\del_z m_\infty(z)|}\asymp \sqrt{|z-L|}.
\end{align}
We can apply the same argument and get the same estimates of the behaviors of $m_\infty$ and $\rho_\infty$ in a small neighborhood of $-L$.
This finishes the proof of Proposition \ref{p:minfty}. 
\end{proof}

\begin{proof}[Proof of Proposition \ref{p:tminfty}]
We only prove the estimate \eqref{e:esttL}. The rest of Proposition \ref{p:tminfty} follows from the same argument as in Proposition \ref{p:minfty}. 
Similar to the proof of Proposition \ref{p:minfty}, we define $\tilde R(w)$, $\tilde \tau$ and $\tilde L=\tilde R(-\tilde \tau)$. Moreover, we have $\tilde R(w)=R(w)-\cX w$. By a perturbation argument we get,
\begin{align}
\tilde \tau =\tau +\frac{\cX}{R''(\tau)}+\O_{\prec}\left(\frac{1}{Nq^2}\right),\quad \tilde L =L+\cX\tau +\O_{\prec}\left(\frac{1}{Nq^2}\right).
\end{align}
Moreover, since $\tau=1+\O(1/q^2)$. The claim follows.
\end{proof}

\section{Free convolution calculations} \label{a:fc}
In this appendix we consider $H$ as in Definition \ref{asup}, with $q \gg N^{1/9}$.  We will calculate various free convolution quantities.  We let
\beq
t := \frac{ N^{\fd}}{N^{1/3}}
\eeq
We recall the construction of $L$, $\tilde L$, $\tilde \rho_\infty(x)$ and $\tilde m_\infty(z)$ from Proposition \ref{p:minfty} and \ref{p:tminfty}. We use the definitions of Section \ref{sec:ht}.  
\begin{lemma} \
For all $E \asymp t^2$ and $k \geq 1$, we have
\beq \label{eqn:hsest}
\left| \int \frac{1}{ (x - (\tilde{L} + E ) )^k } \d \mu_{H} (x) - \int \frac{1}{ (x- ( \tilde{L} + E ) )^k }  \tilde{\rho}_\infty (x)\d x \right| \prec \frac{1}{ t^{2(k-1)} }\left( \frac{1}{ N t^2} + \frac{1}{ N^{1/2} t q^{3/2} }\right).
\eeq
\end{lemma}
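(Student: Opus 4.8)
The plan is to convert the difference of Cauchy transforms into an expression involving the eigenvalue discrepancy function, and then to feed in the edge rigidity of Theorem~\ref{thm:edgerigidity}. Write $w=\tilde L+E$, and recall that, with overwhelming probability, $\supp\mu_H\subset[-\tilde L-\oo(t^2),\,\tilde L+\oo(t^2)]$: this is Theorem~\ref{t:gaussianfluc} together with its analogue at the lower edge, using that $q\gg N^{1/9}$ forces $q^{-6}\ll N^{-2/3}\ll t^2$ so the error terms in \eqref{e:largeeig} are $\oo(t^2)$. Since also $\supp\tilde\rho_\infty=[-\tilde L,\tilde L]$, the point $w$ lies at distance $\asymp E\asymp t^2$ to the right of both supports. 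Introducing the discrepancy function
\begin{align*}
\mathscr D(x):=\#\{i:\lambda_i\leq x\}-N\int_{-\infty}^x\tilde\rho_\infty(y)\,\rd y,
\end{align*}
which vanishes outside $[-\tilde L-\oo(t^2),\,\tilde L+\oo(t^2)]$, a Riemann--Stieltjes integration by parts (boundary terms vanish) gives
\begin{align*}
\int\frac{\rd\mu_H(x)}{(x-w)^k}-\int\frac{\tilde\rho_\infty(x)\,\rd x}{(x-w)^k}=\frac{k}{N}\int\frac{\mathscr D(x)}{(x-w)^{k+1}}\,\rd x,
\end{align*}
so the lemma reduces to bounds on $\mathscr D$.

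The key input is the estimate $|\mathscr D(x)|\prec N^{\oo(1)}$, uniformly for $|x-\tilde L|\lesssim 1$. In the delicate regime $\tilde L-x$ as small as $\asymp t^2$ — corresponding to eigenvalue indices up to $\asymp Nt^3$ — I would derive this from Theorem~\ref{thm:edgerigidity} by the Helffer--Sj\"ostrand argument already used in the proof of Theorem~\ref{t:gaussianfluc} (cf.\ \eqref{e:lowerbound}): evaluating the bounds \eqref{e:stateoutS}--\eqref{e:stateinS} on $m_N-\tilde m_\infty$ at a spectral parameter with $\eta$ just above the lower boundary of $\cD$ and integrating controls $(\mu_H-\tilde\rho_\infty)\big((-\infty,x]\big)=\mathscr D(x)/N$ up to $N^{\oo(1)}/N$. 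Away from the edges, $|x-\tilde L|\gtrsim 1$, one needs essentially nothing: $\int_{-\tilde L}^{\tilde L-1}|\mathscr D(x)|\,\rd x\prec N^{\oo(1)}$, a crude $W_1$-type bound that follows from the standard bulk rigidity of sparse random matrices relative to $\tilde\rho_\infty$ \cite{MR3098073,Lee2016}, since the Cauchy kernel there is only $\OO(1)$.

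Granting these, the lemma follows from an elementary estimate. Substituting $u=\tilde L-x$ on $x\leq\tilde L$ gives $|x-w|=E+u$, whence, splitting $x\leq\tilde L-1$ from the window $|x-\tilde L|\lesssim 1$,
\begin{align*}
\frac{k}{N}\int\frac{|\mathscr D(x)|}{|x-w|^{k+1}}\,\rd x\prec\frac{N^{\oo(1)}}{N}\left(\int_0^{C}\frac{\rd u}{(E+u)^{k+1}}+\oo(t^2)\,E^{-(k+1)}+1\right)\prec\frac{N^{\oo(1)}}{N}\cdot\frac{1}{E^k}\asymp\frac{N^{\oo(1)}}{t^{2(k-1)}}\cdot\frac{1}{Nt^2},
\end{align*}
which proves the claim — in fact a little more, since the stated bound also carries the term $t^{-2(k-1)}N^{-1/2}q^{-3/2}t^{-1}$. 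That extra term is the natural output of the alternative route in which one applies Cauchy's integral formula on the circle $|\zeta-w|=E/2$ and bounds $m_N-\tilde m_\infty$ on it directly by Theorem~\ref{thm:edgerigidity}: the $\eta$-independent error $N^{-1/2}q^{-3/2}$ in \eqref{e:stateoutS} integrates over the circle to precisely that contribution. That route, however, also requires a substitute for Theorem~\ref{thm:edgerigidity} on the two short arcs of the circle that approach the real axis, where $\cD$-control is unavailable — which again reduces to the same pointwise bound on $\mathscr D$.

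The step I expect to be the main obstacle is precisely the near-edge discrepancy estimate $|\mathscr D(x)|\prec N^{\oo(1)}$ on the window $|x-\tilde L|\lesssim 1$, valid in the full sparsity range $q\gg N^{1/9}$ considered here. This forces one to unwind carefully the relation between the control on $m_N-\tilde m_\infty$ that Theorem~\ref{thm:edgerigidity} supplies on $\cD$ — control that genuinely degrades as $\Im z\to0$, so that the spectral parameter cannot be pushed onto the real axis where $w$ sits — and the locations of the $\asymp Nt^3$ eigenvalues within distance $\asymp t^2$ of $\tilde L$, while keeping track of the several error scales $q^{-6}$, $N^{-2/3}$, $N^{-1/2}q^{-3/2}$, $(N\eta)^{-2}$, $q^{-3}(N\eta)^{-1}$ that all must remain $\ll t^2$ on the relevant part of $\cD$. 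Once this input is established, the remaining steps are routine.
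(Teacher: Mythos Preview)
Your approach --- integrate by parts to a discrepancy integral, then feed in pointwise rigidity --- is genuinely different from the paper's, which applies the Helffer--Sj\"ostrand formula directly to the cut-off test function $g(x)=f(x)(x-w)^{-k}$ with the $y$-cutoff placed at height $\asymp t^2$, and reads off the bound $\Phi(t^2)/t^{2k-2}$ with $\Phi(\eta)=1/(N\eta)+1/((N\eta)^{1/2}q^{3/2})$ coming straight from Theorem~\ref{thm:edgerigidity}. The paper's route is a single HS calculation tailored to $g$; yours factors through an intermediate rigidity statement for $\mathscr D$ and then integrates. Both ultimately rest on the same local-law input.

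The gap is in that intermediate step. The uniform claim $|\mathscr D(x)|\prec N^{\oo(1)}$ on $|x-\tilde L|\lesssim 1$ does not follow from Theorem~\ref{thm:edgerigidity} in the full range $q\gg N^{1/9}$: running the HS rigidity argument with the input \eqref{e:stateinS}, the term $((N\eta)^{1/2}q^{3/2})^{-1}$ produces an additional contribution of order $N^{1/2}|x-\tilde L|^{1/2}/q^{3/2}$ to the discrepancy, which for $|x-\tilde L|\asymp 1$ and $q\ll N^{1/3}$ is $\gg N^{\oo(1)}$. This is precisely why the second term $N^{-1/2}t^{-1}q^{-3/2}$ in the target bound is not ``extra'' but essential: with the correct non-uniform rigidity $|\mathscr D(\tilde L-u)|\prec 1+N^{1/2}u^{1/2}/q^{3/2}$, your integral
\[
\frac{1}{N}\int_0^{C}\frac{1+N^{1/2}u^{1/2}q^{-3/2}}{(E+u)^{k+1}}\,\rd u
\]
yields both terms of the claimed bound (the second via $\int_0^\infty u^{1/2}(E+u)^{-k-1}\,\rd u\asymp E^{1/2-k}$). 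So your route is salvageable, but it is not more elementary than the paper's: you still need an HS computation to get the rigidity, now with scale-dependent output rather than a single $N^{\oo(1)}$, and then a second integration on top. The paper's direct HS on $g$ collapses these two steps into one and localises the $y$-integration to $|y|\lesssim t^2$, which is what makes the bound come out cleanly.
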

\begin{proof}
Let $\chi(y)$ be a smooth cut-off function such that $\chi (y) = 1$  for $|y| \leq t^2$ and $\chi (y) = 0$ for $|y| \geq 2 t^2$, such that $| \chi'(y)| \leq C t^{-2}$.  Let $f(x)$ be a cut-off function so that $f(x) = 1$ for $x \leq \tilL + E/3$ and $f(x) = 0$ for $x \geq \tilL + 2 E /3$.  We choose $f(x)$ so that the derivatives obey $| f^{(k)} (x)| \leq C t^{-2k}$.  Since by \eqref{e:largeeig} with overwhelming probability all the eigenvalues of $H$ are less than $\tilL + N^{-2/3+\varepsilon}$ for any $\varepsilon >0$, we can consider instead the test function
\beq
g(x) := f(x) \frac{1}{ (x- ( \tilL + E) )^k}.
\eeq
Let
\beq
S(x + \i y ) := m_N (x + \i y ) - \tilminf (x + \i y ).
\eeq
Define $\eta_* := N^{-2/3+\varepsilon}$ for $\varepsilon >0$, with $\varepsilon < \fd/4$, so that $\eta_* < t^2$.  Note that for $y \geq \eta_*$ we have by Theorem \ref{thm:edgerigidity} that
\beq
| S(x + \i y ) | \prec \frac{1}{N y } + \frac{1}{ ( N y )^{1/2} q^{3/2} } =: \Phi (y),
\eeq
for $q \geq N^{1/9}$ regardless of whether $x\geq \tilL$ or $x\leq \tilL$. 
By the Helffer-Sj{\"o}strand formula,
\begin{align}
\begin{split}
\left| \int g(x) \d \muHo (x) - \int g(x) \tilrhoinf (x) \d x \right| &\leq \left| \int g''(x) \chi (y)y \Im [ S (x + \i y ) ] \d x \d y \right| \\
&+\int | g(x) \chi' (y)  \Im [ S ( x + \i y ) ] | \d x \d y \\
&+ \int | g' (x) \chi' (y) y \Re [ S ( x + \i y ) ] | \d x \d y.
\end{split}\end{align}
For the second two terms note that
\beq
\int |g(x) | \d x \leq \frac{C \log(N) }{t^{2k-2}},\quad \int |g'(x) | \d x \leq \frac{C}{ t^{2k}}, \quad \int |g''(x) | \d x \leq \frac{C }{t^{2k+2}},
\eeq
and that the $y$ integral is only over $t^2< |y| < 2 t^2$.
So, 
\beq
\int | g(x) \chi' (y)  \Im [ S ( x + \i y ) ] | \d x \d y + \int | g' (x) \chi' (y) y \Re [ S ( x + \i y ) ] | \d x \d y \prec \frac{  \Phi ( t^2 )}{t^{2k-2}}.
\eeq
For the last term we first use the fact that $y \to y \Im [ m_N]$ is increasing to bound the $|y| \leq \eta_*$ integral by
\begin{align}
\left| \int_{ |y| \leq \eta_*} g''(x) \chi (y) y \Im [ S (x + \i y ) ]  \d x \d y \right| &\leq 2 \int_{y \leq \eta_*} |g''(x) | \eta_* \Im [ \tilminf (x + \i \eta_* ) ] \d x \d y \notag\\
&+ \int_{y \leq \eta_*} |g''(x) | \eta_* |S(x+\ri \eta_* ) |\d x \d y \label{eqn:hs1}
\end{align}
The second term is  bounded by
\beq
\int_{y \leq \eta_*} |g''(x) | \eta_* |S(x+\ri \eta_* ) | \d x \d y \prec \frac{\Phi ( \eta_* ) \eta_*^2}{ t^{2k+2}} \lesssim \frac{ \Phi ( t^2 )}{ t^{2k-2} }
\eeq
where we used $\eta_* \leq t^2$.   We now estimate the first term of \eqref{eqn:hs1}.    Using \eqref{eqn:imtilminf} we find,
\begin{align}
\begin{split}
 \int_{y \leq \eta_*} |g''(x) | \eta_* \Im [ m (x + \i \eta_* ) ] \d x \d y  & \lesssim \int_{x < 0} |g''(x+ \tilL) |  \eta_*^2 \sqrt{ \eta_*+|x|} \d x \\
 &+   \int_{ x \geq 0} |g''(x+ \tilL) | \frac{ \eta_*^3}{ \sqrt{ \eta_* + x} } \d x.\label{eqn:hs2}
\end{split}\end{align}
We note that $|g''(x+\tilde L)|\lesssim C|x-E|^{-(k+2)}1_{x\leq 2E/3}$. The second integral is bounded by
\begin{align}\begin{split}
\int_{ x\geq 0} |g''(x+\tilL ) | \frac{ \eta_*^3}{ \sqrt{ \eta_* + x} } \rd x&\leq \int_{ 0 < x < \eta_*} |g''(x + \tilL) | \frac{ \eta_*^3}{ \sqrt{ \eta_* + x} } \rd x + \int_{ x \geq \eta_*} |g''(x + \tilL) | \frac{ \eta_*^3}{ \sqrt{ x} }\rd x \\
& \lesssim \frac{ \eta_*^{3.5}}{ t^{2k+4} } + \int_{x \geq \eta_*} |g''(x + \tilL) | \frac{ \eta_*^3}{ \sqrt{x}} \d x
\end{split}\end{align}
The last term is bounded by
\begin{align}\begin{split}
\int_{x \geq \eta_*} |g''(x + \tilL) | \frac{ \eta_*^3}{ \sqrt{x}} \d x &\lesssim \int_{ \eta_* \leq x <  E/3} \frac{1}{ |x -E|^{k+2} } \frac{\eta_*^3}{ \sqrt{x}} \d x + \int_{E/3 < x < 2E/3} \frac{1}{t^4 |x-E|^k } \frac{\eta_*^3}{ \sqrt{x}} \d x \\
&\lesssim \frac{ \eta_*^3}{ t^{2k+3} } 
\end{split}\end{align}
For the other term on the righthand side  of \eqref{eqn:hs2}, similarly we get
\beq
 \int_{x <0} |g''(x+\tilL) |  \eta_*^2 \sqrt{ \eta_*+|x|}  \d x \lesssim  \frac{ \eta_*^{2.5}}{ t^{2k+2}} +  \frac{ \eta_*^2}{ t^{2k+1}}
\eeq
In summary, we have the estimate,
\beq
\left| \int_{ y\leq \eta_*} g''(x) \chi (y) y \Im [ S (x + \i y ) ]  \d x \d y \right|  \prec  \frac{ \Phi (t^2)}{t^{2k-2}} + \frac{1}{ t^{2k-2}} \frac{ \eta_*^2 }{ t^{3}} \prec \frac{ \Phi (t^2)}{t^{2k-2}}
\eeq
where we used that $\eta_*^2 /t^3 = N^{2 \varepsilon } / ( N^{4/3} t^3 ) \leq 1 / ( N t^2)$ by our choice of $\varepsilon$ above.  Finally,
\begin{align}
\int_{y > \eta_* } |g'' (x) y \chi (y) \Im [ S (x+ \i y ) ] \d x \d y &\lesssim \frac{1}{t^{2k+2}} \int_{|y| \leq 2 t^2} y \Phi(y)  \d y  \lesssim  \frac{ \Phi ( t^2 ) }{ t^{2k-2}},
\end{align}
where we used the fact that the support of $\chi$ is in $|y| \leq 2 t^2$.  Putting together our estimates yields the claim.
\end{proof}

\begin{proof}[Proof of Lemma \ref{lem:freeconv1}] From the square root behaviour of $\tilminf$ we easily conclude that $\e^{t/2} \tilxip - \tilL \asymp t^2$.  Then, using \eqref{eqn:hsest} we conclude the same for $\hatxip$.  By taking difference of the defining equations \eqref{e:defxi} for $\tilxip$ and $\hatxip$, we get  
\beq \label{eqn:xisub}
\int \frac{\d \mu_H (x)}{ (x - \e^{t/2} \hatxip )^2}  - \int \frac{ \tilrhoinf (x) \d x }{ ( x - \e^{t/2} \hatxip )^2} =\int\left( \frac{1}{ ( x -\e^{t/2} \tilxip )^2 } - \frac{1}{ (x- \e^{t/2} \hatxip)^2 } \right)  \tilrhoinf (x) \d x
\eeq
We calculate the righthand side
\begin{align}\begin{split}
\int&\left( \frac{1}{ ( x -\e^{t/2} \tilxip )^2 } - \frac{1}{ (x- \e^{t/2} \hatxip)^2 } \right)  \tilrhoinf (x) \d x =  \int  \frac{\e^{t/2} (\tilxip - \hatxip ))( 2x -\e^{t/2} \tilxip -\e^{t/2} \hatxip)}{ (x- \e^{t/2} \tilxip )^2 ( x - \e^{t/2} \hatxip )^2} \tilrhoinf (x) \d x \\
&=  \e^{t/2}( \tilxip - \hatxip )\int  \frac{1}{ ( x - \e^{t/2} \hatxip ) ( x - \e^{t/2}\tilxip)^2} + \frac{1}{ ( x - \e^{t/2}\tilxip ) ( x - \e^{t/2} \hatxip)^2} \tilrhoinf (x ) \d x.
\end{split}\end{align}
Since both $\e^{t/2} \hatxip -\tilL \asymp \e^{t/2} \tilxip - \tilL \asymp t^2$, we see that the integrand is negative and moreover by the square root behaviour of $\tilrhoinf$,
\beq
\left| \int  \frac{1}{ ( x - \e^{t/2} \hatxip ) ( x - \e^{t/2}\tilxip)^2} + \frac{1}{ ( x - \e^{t/2}\tilxip )^2 ( x - \e^{t/2} \hatxip)} \tilrhoinf (x ) \d x \right| \asymp \frac{1}{t^3}.
\eeq
We immediately conclude \eqref{eqn:xiest} from this and the estimate \eqref{eqn:hsest} for the lefthand side of \eqref{eqn:xisub}.   Next, we estimate
\beq
| \hatEp - \tilEp | \leq | \hatxip - \tilxip | + C t | \tilde{m}_\infty (\e^{t/2} \tilxip ) - m_N ( \e^{t/2} \hatxip ) |.
\eeq
The latter quantity we estimate by 
\begin{align}\begin{split}
| \tilde{m}_\infty (\e^{t/2} \tilxip ) - m_N ( \e^{t/2} \hatxip ) | &\leq | \tilde{m}_\infty (\e^{t/2} \tilxip ) - \tilde{m}_\infty ( \e^{t/2} \hatxip ) | + | \tilde{m}_{\infty} (\e^{t/2} \hatxip ) - m_N ( \e^{t/2} \hatxip ) | \\
&\prec t\left( \frac{1}{N^{1/2} t^2 q^{3/2} } + \frac{1}{ N t^3} \right),
\end{split}\end{align}
where we used \eqref{eqn:hsest}.  
Hence, we have proven \eqref{eqn:edgeest}.   Finally, the estimate \eqref{eqn:scalest} is an easy consequence of \eqref{eqn:xiest} and \eqref{eqn:hsest}.  \end{proof}

\begin{lemma}\label{l:eloc}
Let the polynomial of a self-consistent equation be given by
\begin{align}\label{e:selfeqn}
0 = 1 + z m + m^2 + \frac{a}{q^2} m^4 + \frac{b}{q^4} m^6 +\cX m^2+P_1 (m),
\end{align}
where $P_1(m)$ is a polynomial in $m$ of size $\O(q^{-6})$, and constants $a,b=\O(1)$, and $m$ be the Stieltjes transform of a probability measure, supported on $[-\tilL,\tilL]$, with
\begin{align}
\tilde L=2 + \frac{a}{q^2}  + \frac{b}{q^4} - \frac{9 a^2}{4 q^4}+ \cX+\O ( \cX q^{-2} + \cX^2 + q^{-6} ).
\end{align}
For any $z$ in a small neighborhood of $\tilL$, $m(z)$ is given by 
\beq
m (z)= -\tau + \left( \frac{2}{F'' ( -\tau ) } \right)^{1/2} ( z-\tilL)^{1/2} + \O ( |z- \tilL|),
\eeq
where 
\beq
\tau = 1  - \frac{3 a }{q^2} - \frac{5b}{q^4} + \frac{ 18 a^2}{q^4} - \cX+ \O ( \cX q^{-2} + \cX^2 + q^{-6} ).
\eeq
\end{lemma}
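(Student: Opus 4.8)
The statement has the same structure as Propositions~\ref{p:minfty} and~\ref{p:tminfty}: equation~\eqref{e:selfeqn} is of the form $P(z,m)=0$ studied there, with $Q(m)$ replaced by the explicit polynomial $m^2+\tfrac{a}{q^2}m^4+\tfrac{b}{q^4}m^6+P_1(m)$. The plan is to transcribe the analysis of Appendix~\ref{ap:pftminfty} and then extract the quantitative expansions of the critical point and the edge by ordinary perturbation theory. Solving~\eqref{e:selfeqn} for $z$ gives $z=F(m)$, where
\begin{align}
F(w):=-\frac1w-w-\frac{a}{q^2}w^3-\frac{b}{q^4}w^5-\cX w-\frac{P_1(w)}{w}.
\end{align}
Exactly as in the proof of Proposition~\ref{p:minfty}, $m$ is the functional inverse of $F$ on the relevant domain, the right edge of the support is $\tilde L=F(-\tau)$, where $-\tau$ is the unique critical point of $F$ in a fixed neighbourhood of $-1$ (equivalently, since $F'$ is even, $\tau>0$ solves $F'(\tau)=0$ near $1$), and near $\tilde L$ a Taylor expansion using $F'(-\tau)=0$ gives
\begin{align}
z=\tilde L+\frac{F''(-\tau)}{2}(m+\tau)^2\bigl(1+\cE(m)\bigr),\qquad \cE(m)=\O(|m+\tau|).
\end{align}
Since $F''(-\tau)=2+\O(q^{-2}+\cX)\neq0$, inverting this relation with the branch of the square root chosen so that $(z-\tilde L)^{1/2}\in\bC_+$ (as in~\eqref{e:wtau}) yields $m(z)=-\tau+(2/F''(-\tau))^{1/2}(z-\tilde L)^{1/2}+\O(|z-\tilde L|)$, which is the stated form. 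It remains to compute $\tau$.

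To locate $\tau$ I would use
\begin{align}
F'(w)=\frac{1}{w^2}-1-\frac{3a}{q^2}w^2-\frac{5b}{q^4}w^4-\cX+\O(q^{-6}),
\end{align}
so that $F'(\tau)=0$ is the fixed point equation $\tau^{-2}=1+\tfrac{3a}{q^2}\tau^2+\tfrac{5b}{q^4}\tau^4+\cX+\O(q^{-6})$; existence and uniqueness of $\tau$ in a fixed neighbourhood of $1$ is already provided by the Rouch\'e argument in Proposition~\ref{p:minfty} applied to the polynomial attached to~\eqref{e:selfeqn}. Recalling that $\cX=\O_\prec(N^{-1/2}q^{-1})$ and $0<q\lesssim\sqrt N$, so that $\cX\lesssim q^{-2}$, I would treat $q^{-2}$ and $\cX$ as the two independent small parameters, insert the ansatz $\tau=1+c_1q^{-2}+c_2q^{-4}+d_1\cX+\O(q^{-6}+\cX q^{-2}+\cX^2)$, expand both sides of the fixed point equation through this order, and match coefficients; this recursively determines $c_1,c_2,d_1$ and yields the claimed expansion of $\tau$, with the remainder controlled by the Lipschitz bounds for $F'$ near $1$. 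As a consistency check I would substitute the expansion of $\tau$ into $\tilde L=F(-\tau)=\tau^{-1}+\tau+\tfrac{a}{q^2}\tau^3+\tfrac{b}{q^4}\tau^5+\cX\tau+\O(q^{-6})$ and recover the expansion of $\tilde L$ assumed in the hypothesis; in particular the coefficient $-\tfrac94 a^2q^{-4}$ arises from $\tau^{-1}+\tau=2+\tfrac14(\tau^2-1)^2+\cdots$ together with the $\tfrac{a}{q^2}\tau^3$ term.

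The one genuinely laborious point is this second order bookkeeping: one must expand $\tau$ and $F(-\tau)$ simultaneously to second order in the two small quantities $q^{-2}$ and $\cX$, correctly collecting every contribution to the $q^{-4}$ coefficient — in particular the $a^2$ terms, which receive contributions both from the geometric expansion of $\tau^{-2}$ and from the nonlinear correction $c_1^2$ entering $\tau^2$ — and then verify that each discarded monomial is of the stated error order uniformly for $z$ in a fixed neighbourhood of $\tilde L$. Beyond this, no idea is needed that is not already in the proof of Proposition~\ref{p:minfty}; the Lemma is essentially a quantitative sharpening of that result.
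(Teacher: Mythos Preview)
Your proposal is correct and follows essentially the same route as the paper's proof: solve \eqref{e:selfeqn} for $z=F(m)$, locate the unique critical point $-\tau$ of $F$ near $-1$, Taylor expand $z=F(m)$ about $-\tau$, and invert to obtain the square-root expansion of $m(z)$. The only cosmetic difference is that the paper solves $F'(\tau)=0$ by first isolating $\tau^2$ (natural since $F'$ depends only on even powers of $m$) and then taking a square root, whereas you propose an ansatz for $\tau$ directly; the two computations are equivalent and produce the same coefficients.
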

\begin{proof}
We have the polynomial
\beq
0 = 1 + z m + (1+\cX) m^2 + \frac{a}{q^2} m^4 + \frac{b}{q^4} m^6 +P_1 (m)
\eeq
Solving  for $z$,
\beq
z = -\frac{1}{m} - m - \frac{a}{q^2} m^3 - \frac{b}{q^4} m^5-\cX m - P_2(m) := F(m),
\eeq
where $P_2(m)=P_1(m)/m$ is another polynomial of size $\O(q^{-6})$. There is a unique positive solution to $F' (-\tau) = 0$.  We record the expressions,
\beq
F' (m) = \frac{1}{m^2} - 1 -\frac{3 a }{q^2} m^2 - \frac{ 5 b}{q^4} m^4 -\cX- P_2' (m).
\eeq
and
\beq
F''(m) = -\frac{2}{m^3} - \frac{6 a}{q^2} m - \frac{ 20 b}{q^4} m^3 - P_2'' (m).
\eeq
We first find
\beq
\tau^2 = 1 - \cX - \frac{3 a }{q^2} - \frac{5b}{q^4} + \frac{ 18 a^2}{q^4} + \O ( \cX q^{-2} + \cX^2 + q^{-6} ),
\eeq
and subsequently,
we can expand to obtain,
\beq
\tau = 1 - \frac{ 3 a}{2 q^2}  - \frac{5b}{2 q^4} + \frac{63 a^2}{8 q^4 }- \frac{ \cX}{2} + \O ( \cX q^{-2} + \cX^2 + q^{-6} ).
\eeq
Substituting this in $F (- \tau)$ we get
\beq
\tilL = 2 + \frac{a}{q^2}+ \frac{b}{q^4} - \frac{9 a^2}{4 q^4}+ \cX +\O ( \cX q^{-2} + \cX^2 + q^{-6} ).
\eeq
We see that
\beq\label{e:F''}
F''( -\tau ) = 2 + \O ( q^{-2} ).
\eeq
We expand
\beq
z = \tilL + \frac{ F'' (- \tau ) }{ 2} ( m- \tau)^2 + \O ( | m- \tau |^3)
\eeq
and invert,
\beq\begin{split}
m(z) &= -\tau + \left( \frac{2}{F'' ( -\tau ) } \right)^{1/2} (z-\tilL)^{1/2} + \O ( |z- \tilL|).
\end{split}\eeq

\end{proof}

\begin{proof}[Proof of Lemma \ref{lem:freeconv2}]
In the following proof we write $m=\tilde m_{\infty}$, $\xi=\tilde\xi_+$ and $E_+=\tilde E_+$. Recall from \eqref{e:ms-ceqn}, $m$ satisfies a self-consistent equation, 
\begin{align}\label{e:selfeqn}
0 = 1 + z m + m^2 + \frac{6\cC_4}{q^2} m^4 + \frac{120\cC_6}{q^4} m^6 +\cX_t m^2+\O(q^{-6}).
\end{align}
Then by Lemma \ref{l:eloc}, we have
\begin{align}
\tilde L=2 + \frac{6\cC_4}{q^2}  + \frac{120\cC_6}{q^4} - \frac{81\cC_4^2}{q^4}+ \cX+\O ( \cX q^{-2} + \cX^2 + q^{-6} ).
\end{align}

In the following we first compute $E_+ = \xi - (1 - \e^{-t} )\e^{t/2} m ( \e^{t/2} \xi )$.
For $\xi$ (as defined in \eqref{e:defxi}), by Lemma \ref{l:eloc} and \eqref{e:F''}, we have,
\beq\begin{split}
\frac{\e^{-t}}{ 1- \e^{-t} } = m' ( \e^{t/2} \xi ) 
&= \frac{1}{ ( 4+\O(q^{-2}) )^{1/2} } \frac{1}{ \sqrt{ \e^{t/2} \xi - \tilL }} + \O(1).
\end{split}\eeq
We can solve for $\xi$ and get
\beq
\e^{t/2} \xi = \tilL + \frac{1}{4} (\e^t -1 )^2 + \O ( t^3 + t^2 q^{-2} ).
\eeq
 Using Lemma \ref{e:ms-ceqn}, we compute,
\begin{align}\begin{split}
m ( \e^{t/2} \xi ) &= -1+\frac{18\cC_4}{q^2}+ \left(1+\O(q^{-2}) \right)^{1/2} (\e^{t/2} \xi - \tilL )^{1/2} + \O (t^2) \\
&= -1 + \frac{9\cC_4}{ q^2} + \frac{ \e^{t}-1}{2} + \O\left(t^2 +q^{-4} \right).
\end{split}\end{align}
Hence,
\begin{align}\begin{split}\label{e:eqnE}
E_+ 
&= \e^{-t/2} \tilL + \frac{e^{-t/2} (\e^t -1 )^2}{4} - (1- \e^{-t} )\e^{t/2} \left( -1 + \frac{9\cC_4}{q^2} + \frac{ \e^t-1}{2} \right) + \O \left( t^3 + t q^{-4} + t^2 q^{-2} \right)\\
&= 2+
\frac{6(1-2t)\cC_4}{q^2} 
+ \frac{ 120 \cC_6}{q^2}- \frac{81\cC_4^2}{q^4} +\cX +\O \left( \cX q^{-2} + \cX^2 + q^{-6}+t \cX + t^3 + t q^{-4} + t^2 q^{-2} \right)\\
&= 2+
\frac{6(1-2t)\cC_4}{q^2} 
+ \frac{ 120 \cC_6}{q^2}- \frac{81\cC_4^2}{q^4} +\cX\\
&+\O_\prec \left(t^2q^{-2}+tq^{-4} + q^{-6} +tN^{-1/2}q^{-1}+N^{-1/2}q^{-3}+t^{1/2}N^{-1}\right).
\end{split}
\end{align}
where we used that $\cX\prec N^{-1/2}q^{-1}$.

The $k$-th cumulant of the entries of $H(t)$ (as defined in \eqref{e:Ht}) is $1/N$ when $k=2$, and 
\begin{align}
\frac{(k-1)!e^{-tn/2}\cC_k}{Nq^{k-2}},
\end{align}
for $k\geq 3$.
The self-consistent equation for $H(t)$ is given by
\begin{align}\label{e:selfeqn}
0 = 1 + z m_t + m_t^2 + \frac{6e^{-2t}\cC_4}{q^2} m_t^4 + \frac{120e^{-3t}\cC_6}{q^4} m_t^6 +\cX_t m_t^2+\O(q^{-6}),
\end{align}
By Lemma \ref{l:eloc}, we have
\begin{align}\label{e:estLt}
\tilL_t=2 + \frac{6e^{-2t}\cC_t}{q^2}  + \frac{120e^{-3t}\cC_6}{q^4} - \frac{81 e^{-4t}\cC_4}{ q^4}+ \cX_t+\O ( \cX_t q^{-2} + \cX_t^2 + q^{-6} )
\end{align}
We note that 
\beq
\cX_t = \cX + \O_\prec\left( t N^{-1/2} q^{-1} + t^{1/2}N^{-1} \right)=\O_\prec(N^{-1/2}q^{-1}),
\eeq
and rewrite \eqref{e:estLt} as
\begin{align}\begin{split}\label{e:eqnL}
\tilL_t&=
2  + \frac{ 6 \mathcal{C}_4}{q^2}(1 - 2 t)  + \frac{ 120 \mathcal{C}_6}{q^4} - \frac{81 \mathcal{C}_4^2}{q^4} + \cX\\
& +\O_\prec \left(t^2q^{-2}+tq^{-4} + q^{-6} +tN^{-1/2}q^{-1}+N^{-1/2}q^{-3}+t^{1/2}N^{-1}\right).
\end{split}\end{align}
Lemma \ref{lem:freeconv2} follows from comparing \eqref{e:eqnE} and \eqref{e:eqnL}, and recalling $q\gg N^{1/9}$.
\end{proof}

\bibliography{References.bib}{}

\begin{thebibliography}{10}

\bibitem{AEK}
O.~Ajanki, L.~Erd\H{o}s, and T.~Kr\"uger.
\newblock Quadratic vector equations on complex upper half-plane.
\newblock {\em to appear in Memoirs of AMS}, 2017.

\bibitem{ajanki2}
O.~Ajanki, L.~Erd{\H{o}}s, and T.~Kr\"uger.
\newblock Universality for general {W}igner-type matrices.
\newblock {\em Probab. Theory Related Fields}, pages 1--61, 2016.

\bibitem{MR3478311}
O.~H. Ajanki, L.~Erd\H{o}s, and T.~Kr\"uger.
\newblock Local spectral statistics of {G}aussian matrices with correlated
  entries.
\newblock {\em J. Stat. Phys.}, 163(2):280--302, 2016.

\bibitem{BHKY}
R.~Bauerschmidt, J.~Huang, A.~Knowles, and Y.~Horng-Tzer.
\newblock Edge rigidity and universality of d-regular graphs of growing degree.
\newblock {\em in preparation}, 2017.

\bibitem{BC}
S.~Belinschi and M.~Capitaine.
\newblock Spectral properties of polynomials in independent {W}igner and
  deterministic matrices.
\newblock {\em preprint, arXiv:1611.07440}, 2016.

\bibitem{MR3183577}
A.~Bloemendal, L.~Erd{\H{o}}s, A.~Knowles, H.-T. Yau, and J.~Yin.
\newblock Isotropic local laws for sample covariance and generalized {W}igner
  matrices.
\newblock {\em Electron. J. Probab.}, 19:no. 33, 53, 2014.

\bibitem{MR1864966}
B.~Bollob\'as.
\newblock {\em Random graphs}, volume~73 of {\em Cambridge Studies in Advanced
  Mathematics}.
\newblock Cambridge University Press, Cambridge, second edition, 2001.

\bibitem{MR3541852}
P.~Bourgade, L.~Erd\H{o}s, H.-T. Yau, and J.~Yin.
\newblock Fixed energy universality for generalized {W}igner matrices.
\newblock {\em Comm. Pure Appl. Math.}, 69(10):1815--1881, 2016.

\bibitem{MR2882891}
A.~E. Brouwer and W.~H. Haemers.
\newblock {\em Spectra of graphs}.
\newblock Universitext. Springer, New York, 2012.

\bibitem{MR3629874}
Z.~Che.
\newblock Universality of random matrices with correlated entries.
\newblock {\em Electron. J. Probab.}, 22, 2017.

\bibitem{MR1421568}
F.~R.~K. Chung.
\newblock {\em Spectral graph theory}, volume~92 of {\em CBMS Regional
  Conference Series in Mathematics}.
\newblock Published for the Conference Board of the Mathematical Sciences,
  Washington, DC; by the American Mathematical Society, Providence, RI, 1997.

\bibitem{1Coifman24052005}
R.~R. Coifman, S.~Lafon, A.~B. Lee, M.~Maggioni, B.~Nadler, F.~Warner, and
  S.~W. Zucker.
\newblock Geometric diffusions as a tool for harmonic analysis and structure
  definition of data: Diffusion maps.
\newblock {\em Proc. Natl. Acad. Sci. USA}, 102(21):7426--7431, 2005.

\bibitem{2Coifman24052005}
R.~R. Coifman, S.~Lafon, A.~B. Lee, M.~Maggioni, B.~Nadler, F.~Warner, and
  S.~W. Zucker.
\newblock Geometric diffusions as a tool for harmonic analysis and structure
  definition of data: Multiscale methods.
\newblock {\em Proc. Natl. Acad. Sci. USA}, 102(21):7432--7437, 2005.

\bibitem{MR0120167}
P.~Erd\H{o}s and A.~R\'enyi.
\newblock On random graphs. {I}.
\newblock {\em Publ. Math. Debrecen}, 6:290--297, 1959.

\bibitem{MR3119922}
L.~Erd{\H{o}}s, A.~Knowles, and H.-T. Yau.
\newblock Averaging fluctuations in resolvents of random band matrices.
\newblock {\em Ann. Henri Poincar\'e}, 14(8):1837--1926, 2013.

\bibitem{MR2964770}
L.~Erd{\H{o}}s, A.~Knowles, H.-T. Yau, and J.~Yin.
\newblock Spectral statistics of {E}rd{\H o}s-{R}\'enyi {G}raphs {II}:
  {E}igenvalue spacing and the extreme eigenvalues.
\newblock {\em Comm. Math. Phys.}, 314(3):587--640, 2012.

\bibitem{MR3085669}
L.~Erd{\H{o}}s, A.~Knowles, H.-T. Yau, and J.~Yin.
\newblock Delocalization and diffusion profile for random band matrices.
\newblock {\em Comm. Math. Phys.}, 323(1):367--416, 2013.

\bibitem{MR3068390}
L.~Erd{\H{o}}s, A.~Knowles, H.-T. Yau, and J.~Yin.
\newblock The local semicircle law for a general class of random matrices.
\newblock {\em Electron. J. Probab.}, 18:no. 59, 58, 2013.

\bibitem{MR3098073}
L.~Erd{\H{o}}s, A.~Knowles, H.-T. Yau, and J.~Yin.
\newblock Spectral statistics of {E}rd{\H o}s-{R}\'enyi graphs {I}: {L}ocal
  semicircle law.
\newblock {\em Ann. Probab.}, 41(3B):2279--2375, 2013.

\bibitem{correlated}
L.~Erd{\H{o}}s, T.~Kr\"uger, and D.~Schr{\"o}der.
\newblock Random matrices with slow correlation decay.
\newblock {\em preprint, arXiv:1705.10661}, 2017.

\bibitem{MR2639734}
L.~Erd{\H{o}}s, J.~A. Ramirez, B.~Schlein, and H.-T. Yau.
\newblock Universality of sine-kernel for {W}igner matrices with a small
  {G}aussian perturbation.
\newblock {\em Electron. J. Probab.}, 15:no. 18, 526--603, 2010.

\bibitem{MR2481753}
L.~Erd{\H{o}}s, B.~Schlein, and H.-T. Yau.
\newblock Local semicircle law and complete delocalization for {W}igner random
  matrices.
\newblock {\em Comm. Math. Phys.}, 287(2):641--655, 2009.

\bibitem{MR2537522}
L.~Erd{\H{o}}s, B.~Schlein, and H.-T. Yau.
\newblock Semicircle law on short scales and delocalization of eigenvectors for
  {W}igner random matrices.
\newblock {\em Ann. Probab.}, 37(3):815--852, 2009.

\bibitem{MR2810797}
L.~Erd{\H{o}}s, B.~Schlein, and H.-T. Yau.
\newblock Universality of random matrices and local relaxation flow.
\newblock {\em Invent. Math.}, 185(1):75--119, 2011.

\bibitem{MR2919197}
L.~Erd{\H{o}}s, B.~Schlein, H.-T. Yau, and J.~Yin.
\newblock The local relaxation flow approach to universality of the local
  statistics for random matrices.
\newblock {\em Ann. Inst. Henri Poincar\'e Probab. Stat.}, 48(1):1--46, 2012.

\bibitem{kevin3}
L.~Erd{\H{o}}s and K.~Schnelli.
\newblock Universality for random matrix flows with time-dependent density.
\newblock {\em to appear in Ann. Inst. Henri Poincaré Probab. Stat.}, 2016.

\bibitem{MR2981427}
L.~Erd{\H{o}}s, H.-T. Yau, and J.~Yin.
\newblock Bulk universality for generalized {W}igner matrices.
\newblock {\em Probab. Theory Related Fields}, 154(1-2):341--407, 2012.

\bibitem{MR2871147}
L.~Erd{\H{o}}s, H.-T. Yau, and J.~Yin.
\newblock Rigidity of eigenvalues of generalized {W}igner matrices.
\newblock {\em Adv. Math.}, 229(3):1435--1515, 2012.

\bibitem{EvEc}
S.~N. Evangelou and E.~N. Economou.
\newblock Spectral density singularities, level statistics, and localization in
  a sparse random matrix ensemble.
\newblock {\em Phys. Rev. Lett.}, 68:361--364, Jan 1992.

\bibitem{MR2647136}
O.~N. Feldheim and S.~Sodin.
\newblock A universality result for the smallest eigenvalues of certain sample
  covariance matrices.
\newblock {\em Geom. Funct. Anal.}, 20(1):88--123, 2010.

\bibitem{MR0108839}
E.~N. Gilbert.
\newblock Random graphs.
\newblock {\em Ann. Math. Statist.}, 30:1141--1144, 1959.

\bibitem{MR2183281}
U.~Haagerup and S.~Thorb{j\o}rnsen.
\newblock A new application of random matrices: {${\rm Ext}(C^*_{\rm
  red}(F_2))$} is not a group.
\newblock {\em Ann. of Math. (2)}, 162(2):711--775, 2005.

\bibitem{MR3678478}
Y.~He and A.~Knowles.
\newblock Mesoscopic eigenvalue statistics of {W}igner matrices.
\newblock {\em Ann. Appl. Probab.}, 27(3):1510--1550, 2017.

\bibitem{HKR}
Y.~He, A.~Knowles, and R.~Rosenthal.
\newblock Isotropic self-consistent equations for mean-field random matrices.
\newblock {\em to appear in Probab. Theory Related Fields}, 2017.

\bibitem{MR2247919}
S.~Hoory, N.~Linial, and A.~Wigderson.
\newblock Expander graphs and their applications.
\newblock {\em Bull. Amer. Math. Soc. (N.S.)}, 43(4):439--561 (electronic),
  2006.

\bibitem{MR3429490}
J.~Huang, B.~Landon, and H.-T. Yau.
\newblock Bulk universality of sparse random matrices.
\newblock {\em J. Math. Phys.}, 56(12):123301, 19, 2015.

\bibitem{MR1782847}
S.~Janson, T.~{\L}uczak, and A.~Rucinski.
\newblock {\em Random graphs}.
\newblock Wiley-Interscience Series in Discrete Mathematics and Optimization.
  Wiley-Interscience, New York, 2000.

\bibitem{MR1825319}
A.~Khorunzhy.
\newblock Sparse random matrices: spectral edge and statistics of rooted trees.
\newblock {\em Adv. in Appl. Probab.}, 33(1):124--140, 2001.

\bibitem{MR3034787}
A.~Knowles and J.~Yin.
\newblock Eigenvector distribution of {W}igner matrices.
\newblock {\em Probab. Theory Related Fields}, 155(3-4):543--582, 2013.

\bibitem{LZ}
B.~Landon and Z.~Che.
\newblock Local spectral statistics of the addition of random matrices.
\newblock {\em preprint, arXiv:1701.00513}, 2017.

\bibitem{fix2}
B.~Landon, P.~Sosoe, and H.-T. Yau.
\newblock Fixed energy universality of {D}yson {B}rownian motion.
\newblock {\em preprint, arXiv: 1609.09011}, 2016.

\bibitem{MR3687212}
B.~Landon and H.-T. Yau.
\newblock Convergence of local statistics of {D}yson {B}rownian motion.
\newblock {\em Comm. Math. Phys.}, 355(3):949--1000, 2017.

\bibitem{LY}
B.~Landon and H.-T. Yau.
\newblock Edge statistics of {D}yson {B}rownian motion.
\newblock {\em preprint}, 2017.

\bibitem{schnelliedge1}
J.~O. Lee and K.~Schnelli.
\newblock Edge universality for deformed {W}igner matrices.
\newblock {\em Rev. Math. Phys.}, 27(08):1550018, 2015.

\bibitem{schnelliedge2}
J.~O. Lee and K.~Schnelli.
\newblock Tracy--{W}idom distribution for the largest eigenvalue of real sample
  covariance matrices with general population.
\newblock {\em Ann. Appl. Probab.}, 26(6):3786--3839, 2016.

\bibitem{Lee2016}
J.~O. Lee and K.~Schnelli.
\newblock Local law and {T}racy-{W}idom limit for sparse random matrices.
\newblock {\em To appear in Probab. Theory Related Fields}, 2017.

\bibitem{MR0208649}
V.~A. Mar{\v c}enko and L.~A. Pastur.
\newblock Distribution of eigenvalues in certain sets of random matrices.
\newblock {\em Mat. Sb. (N.S.)}, 72 (114):507--536, 1967.

\bibitem{MR1468791}
B.~Mohar.
\newblock Some applications of {L}aplace eigenvalues of graphs.
\newblock In {\em Graph symmetry ({M}ontreal, {PQ}, 1996)}, volume 497 of {\em
  NATO Adv. Sci. Inst. Ser. C Math. Phys. Sci.}, pages 225--275. Kluwer Acad.
  Publ., Dordrecht, 1997.

\bibitem{MR1240959}
B.~Mohar and S.~Poljak.
\newblock Eigenvalues in combinatorial optimization.
\newblock In {\em Combinatorial and graph-theoretical problems in linear
  algebra ({M}inneapolis, {MN}, 1991)}, volume~50 of {\em IMA Vol. Math.
  Appl.}, pages 107--151. Springer, New York, 1993.

\bibitem{MR2475670}
S.~P\'ech\'e.
\newblock Universality results for the largest eigenvalues of some sample
  covariance matrix ensembles.
\newblock {\em Probab. Theory Related Fields}, 143(3-4):481--516, 2009.

\bibitem{doi:10.1137/0611030}
A.~Pothen, H.~D. Simon, and K.-P. Liou.
\newblock Partitioning sparse matrices with eigenvectors of graphs.
\newblock {\em SIAM Journal on Matrix Analysis and Applications},
  11(3):430--452, 1990.

\bibitem{MR2953145}
M.~Shcherbina and B.~Tirozzi.
\newblock Central limit theorem for fluctuations of linear eigenvalue
  statistics of large random graphs: diluted regime.
\newblock {\em J. Math. Phys.}, 53(4):043501, 18, 2012.

\bibitem{609407}
J.~Shi and J.~Malik.
\newblock Normalized cuts and image segmentation.
\newblock In {\em Computer Vision and Pattern Recognition, 1997. Proceedings.,
  1997 IEEE Computer Society Conference on}, pages 731--737, Jun 1997.

\bibitem{MR2726110}
S.~Sodin.
\newblock The spectral edge of some random band matrices.
\newblock {\em Ann. of Math. (2)}, 172(3):2223--2251, 2010.

\bibitem{MR1727234}
A.~Soshnikov.
\newblock Universality at the edge of the spectrum in {W}igner random matrices.
\newblock {\em Comm. Math. Phys.}, 207(3):697--733, 1999.

\bibitem{MR2952760}
D.~Spielman.
\newblock Spectral graph theory.
\newblock In {\em Combinatorial scientific computing}, Chapman \& Hall/CRC
  Comput. Sci. Ser., pages 495--524. CRC Press, Boca Raton, FL, 2012.

\bibitem{MR1450607}
D.~A. Spielman and S.-H. Teng.
\newblock Spectral partitioning works: planar graphs and finite element meshes.
\newblock In {\em 37th {A}nnual {S}ymposium on {F}oundations of {C}omputer
  {S}cience ({B}urlington, {VT}, 1996)}, pages 96--105. IEEE Comput. Soc.
  Press, Los Alamitos, CA, 1996.

\bibitem{MR2294342}
D.~A. Spielman and S.-H. Teng.
\newblock Spectral partitioning works: planar graphs and finite element meshes.
\newblock {\em Linear Algebra Appl.}, 421(2-3):284--305, 2007.

\bibitem{MR2669449}
T.~Tao and V.~Vu.
\newblock Random matrices: universality of local eigenvalue statistics up to
  the edge.
\newblock {\em Comm. Math. Phys.}, 298(2):549--572, 2010.

\bibitem{MR2784665}
T.~Tao and V.~Vu.
\newblock Random matrices: universality of local eigenvalue statistics.
\newblock {\em Acta Math.}, 206(1):127--204, 2011.

\end{thebibliography}
\bibliographystyle{abbrv}

\end{document}